\documentclass[12pt]{article} 
\usepackage{a4,amssymb, amsthm, dsfont, float}
\usepackage{amsmath}
\usepackage{mathrsfs,amsfonts}
\usepackage{graphics, epsfig, subfig, xcolor}
\usepackage[mathscr]{euscript} 
\usepackage{bbm, mathtools, nicefrac, float}
\usepackage[normalem]{ulem}
\usepackage{enumerate}
\usepackage[shortlabels]{enumitem}
\usepackage{tikz}
\usetikzlibrary{arrows}
\usetikzlibrary {arrows.meta}
\usepackage{bm}
\usepackage{graphics, epsfig, subfig, xcolor}
\usepackage{algorithm}
\usepackage{algpseudocode}
\usepackage{tabularray}
\DeclareMathOperator*{\esssup}{ess\,sup}
\DeclareMathAlphabet{\mathpzc}{OT1}{pzc}{m}{it}
\newtheorem{thm}{Theorem}[section]

\newtheorem{lem}[thm]{Lemma}
\newtheorem{prop}[thm]{Proposition}
\newtheorem{defn}[thm]{Definition}
\newtheorem{rem}[thm]{Remark}
\newtheorem{assum}[thm]{Assumption}
\numberwithin{equation}{section}

\newtheorem{numresult}[thm]{Numerical Result}
\renewcommand{\theequation}{\arabic{section}.\arabic{equation}}

\begin{document}
\title{Analysis and Numerics  of a Stationary Drift-Diffusion Model for Electrical Discharge in MEMS}

\author{ Runan He 
\thanks{Instituto de Ciencias Matem\'{a}ticas (ICMAT), Calle Nicolas Cabrera 13, 28049 Madrid, Spain.}~ and Wenjia Xie
\thanks{
Feishu, 
No.55 Yueyaquan Road, Putuo District,
200333 Shanghai,
China.}}


\date{\today}

\maketitle
\begin{abstract}

This work presents the analysis and numerical simulation of a stationary drift-diffusion model for electrical discharge in micro-electro-mechanical systems (MEMS). The model couples Poisson’s equation for the electrostatic potential with continuity equations for positive ions and electrons, incorporating a Townsend-type impact ionization source term that depends exponentially on the electric field magnitude.
We prove the existence of weak solutions under physically relevant assumptions and establish uniform bounds on the carrier densities. The proof relies on a regularization–approximation scheme with truncated nonlinearities, monotone operator theory (Browder–Minty), iterative energy estimates, and Stampacchia-type truncation arguments.
We further develop a robust finite element solver to simulate the carrier density and electrostatic potential profiles for several geometries, including two-dimensional domains and a three-dimensional axisymmetric geometry.

\textbf{Keywords:}  MEMS, electrical discharge, drift-diffusion system, weak solutions, finite element method.

\end{abstract}


\section{Introduction}
In this work, we study the existence of weak solutions and the numerical simulation of a stationary variant of the drift-diffusion system that models electrical discharge in micro-electro-mechanical systems (MEMS) and incorporates a Townsend-type ionisation source term. This extension is motivated by the observation that the narrow gas gaps in MEMS devices generate intense local electric fields, under which impact ionisation of gas molecules becomes significant, leading to electron avalanche growth and eventual electrical breakdown \cite{townsend1915electricity,xiao2016gas}. The specific system under investigation reads
\begin{subequations}\label{S_EPSys}
	\begin{equation}
		-\Delta \phi=(p-n)/\epsilon_\phi,\quad x\in\Omega,
	\end{equation}
	\begin{equation}\label{S_parab_eq_pos}
		-\nabla\cdot \left(\epsilon_p\nabla p+p\left(\epsilon_p\nabla \phi-\mathbf{v}\right)\right)=F(n,|\nabla \phi|),\quad x\in\Omega,
	\end{equation}
	\begin{equation}\label{S_parab_eq_neg}
		-\nabla\cdot \left(\epsilon_n\nabla n-n\left(\epsilon_n\nabla \phi+\mathbf{v}\right)\right)=F(n,|\nabla\phi|),\quad x\in\Omega,
	\end{equation}
	\begin{equation}\label{nonlinearity}			F(n,|\nabla\phi|)=\epsilon_nn|\nabla\phi|\left(\alpha_1 e^{-\alpha_2/|\nabla\phi|}-\eta_0\right),
	\end{equation}
\end{subequations}
supplemented by mixed Dirichlet-Neumann boundary conditions:
\begin{subequations}\label{bdyval}
	\begin{equation}\label{Dirichletbdy}
		\phi(x)=\phi_D(x),\quad p(x)=p_D(x),
		\quad n(x)=n_D(x),\quad  
		x\in \partial\Omega_D,  
	\end{equation}
	\begin{equation}\label{Neumannbdy}
		\nu\cdot\nabla \phi=\nu\cdot \nabla p =
		\nu\cdot \nabla n=0,\quad x\in \partial\Omega_N.
\end{equation}\end{subequations}
where $\phi=\phi(x)$, $p=p(x)$ and $n=n(x)$ are unknown functions, representing the electrical potential and the charge densities of positive ions and electrons, respectively. The region $\Omega\subset\mathbb{R}^{d}$ with $d=2,~3$ represents the gas-filled part of the interior of a MEMS device and has boundary $\partial\Omega$ consists of a Dirichlet part $\partial\Omega_D$ and a Neumann part $\partial\Omega_N$: $\partial\Omega = \overline{\partial\Omega_D \cup \partial\Omega_N}$ and $\partial\Omega_D \cap \partial\Omega_N = \emptyset$.  $\nu$ denotes the outward unit normal vector on $\partial\Omega_N$.  The positive constants $\epsilon_\phi, \epsilon_p, \epsilon_n$ are (scaled) permittivity and diffusion coefficients, while $\mathbf{v}:\Omega\to\mathbb{R}^d$ denotes a prescribed velocity of the  incompressible gas flow in a MEMS device, so that $\nabla\cdot\mathbf{v}=0$ in $\Omega$ and tangential to the boundary $\nu\cdot\mathbf{v}=0$ on $\partial\Omega$. The parameters $\alpha_1, \alpha_2 > 0$ characterize the magnitude and activation field of the ionization gate, and $\eta_0 \geq 0$ represents a background attachment rate. 
The Dirichlet data $\phi_D, p_D, n_D$ model Ohmic contacts where thermal equilibrium is assumed, while the homogeneous Neumann conditions represent insulating or symmetry boundaries.

The ionization source term $F(n,|\nabla\phi|)$ is inspired by the classical Townsend theory of electron avalanches in gases \cite{townsend1915electricity,townsend1910theory}. In its original form, the first Townsend coefficient describes the number of ionizing collisions per unit length and is proportional to $e^{-E_0/|\nabla\phi|}$, where $E_0$ is a threshold field strength. In the MEMS context, such field-driven discharge arises in the narrow gaps between electrodes (see, for example, MEMS capacitor, Figure \ref{MEMSsketch}), where the high local field strengths characteristic of microscale geometries can trigger avalanche ionization and breakdown. The smooth exponential gate adopted in \eqref{S_EPSys} avoids the mathematical difficulties associated with hard-cutoff models used in corona discharge studies \cite{budd1991coronas,morrow1997streamer}, while retaining the essential physics of field-dependent ionization with a well-defined activation threshold.

More is said about the physical significance of the problem in the  companion paper \cite{gimperlein2026analysis}.

We shall then prove the following well-posedness result.
\begin{thm}[Existence of Weak Solutions]\label{SS_Existence}
	Under Assumption \ref{assumption} below, there exists a weak solution  $(\phi, p, n)\in (H^{1}(\Omega))^3$ of the problem \eqref{S_EPSys}-\eqref{bdyval} which satisfies $0\leq p,~ n\leq K=\mathrm{const.}<\infty$ almost everywhere in $\Omega$. 
\end{thm}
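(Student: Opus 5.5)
We argue by a regularization and fixed-point scheme, following the strategy announced in the abstract.

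\textbf{Truncated problem.} Fix a large constant $M>0$ and a truncation level $K$ to be chosen later. We replace the nonlinearities by bounded versions. Set $[s]_K=\min\{\max\{s,0\},K\}$. The source $F(n,|\nabla\phi|)$ is replaced by
\[
F_M([n]_K,\nabla\phi)=\epsilon_n[n]_K\, T_M(|\nabla\phi|)\bigl(\alpha_1e^{-\alpha_2/T_M(|\nabla\phi|)}-\eta_0\bigr),
\]
where $T_M(s)=\min\{s,M\}$. In the drift terms and in the Poisson right-hand side we use $[p]_K$ and $[n]_K$ in place of $p$ and $n$. The Dirichlet data are assumed (Assumption \ref{assumption}) to lie in $H^1\cap L^\infty$ with $0\le p_D,n_D\le K_0$. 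We also assume the geometric and regularity conditions on $\Omega$ and $\mathbf{v}$ needed for $\nabla\phi\in L^q$ with $q>d$ (Gr\"oger/Meyers regularity for mixed problems).

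\textbf{Fixed point.} Given $(\bar p,\bar n)$ in a closed convex subset of $(L^2(\Omega))^2$, we solve three problems in turn:
\begin{itemize}
\item the linear Poisson problem for $\phi$ with right-hand side $([\bar p]_K-[\bar n]_K)/\epsilon_\phi$;
\item the linear elliptic equations for $p$ and $n$, with convection fields built from $\nabla\phi$ and $\mathbf{v}$, lower-order terms truncated, and bounded source $F_M([\bar n]_K,\nabla\phi)\in L^2$.
\end{itemize}
The operators are coercive after the truncation. The drift term $[p]_K\nabla\phi\cdot\nabla w$ is handled through $\nabla\phi\in L^q$ and bounded $[p]_K$, with Young's inequality. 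For $\mathbf{v}$, the identities $\nabla\cdot\mathbf v=0$ and $\nu\cdot\mathbf v=0$ make $\int \mathbf v p\cdot\nabla p=0$ on the homogeneous part. Hence Browder--Minty (or Lax--Milgram when the map is linear in the unknown) gives unique solutions in $H^1$, and uniform $H^1$ bounds follow from energy estimates. Compactness of $H^1\hookrightarrow L^2$, together with continuity of the map (strong convergence of $\nabla\phi$ in $L^2$ and dominated convergence in the truncated terms), lets Schauder's theorem produce a solution $(\phi_{K,M},p_{K,M},n_{K,M})$ of the truncated problem.

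\textbf{Removing truncations.} We first test the equations with $p^-$ and $n^-$. The source for $n$ is $[n]_K\cdot(\dots)$, which vanishes on $\{n<0\}$. In the drift term, $[p]_K$ vanishes where $p<0$. Since the boundary data are nonnegative, this yields $p,n\ge0$.

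The upper bound is the main obstacle. The source $\epsilon_n n|\nabla\phi|(\dots)$ has the wrong sign: it is a growth term whose coefficient $|\nabla\phi|$ is only in $L^q$. We handle $n$ first, since its equation is self-contained given $\phi$. Test with $(n-k)^+$ for $k\ge K_0$ and use the drift identity
\[
\int [n]_K\nabla\phi\cdot\nabla(n-k)^+=\tfrac12\int\nabla\phi\cdot\nabla\bigl((n-k)^+\bigr)^2+k\int\nabla\phi\cdot\nabla(n-k)^+ .
\]
The first term equals $\tfrac1{2\epsilon_\phi}\int(p-n)((n-k)^+)^2$. The part involving $-n$ has a favorable sign, and the part involving $p$ is bounded via the $p$ truncation. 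The source contributes at most
\[
\alpha_1\epsilon_n\int|\nabla\phi|\,(n-k)^+\bigl((n-k)^++k\bigr).
\]
We estimate it by H\"older's inequality with $\|\nabla\phi\|_{L^q}$, $q>d$, and Sobolev embedding, then absorb the result with smallness of the measure of $A_k=\{n>k\}$. This gives the standard Stampacchia inequality $\|(n-k)^+\|\le C\,k\,|A_k|^{\gamma}$ with $\gamma>1$, iterated (Moser-type if needed) to reach $n\le K_1$. The constant depends on $\|\nabla\phi\|_{L^q}$, and that norm depends on $K$ through $[p]_K,[n]_K\le K$. This is where we expect difficulty. To close the argument, we bound $\|\nabla\phi\|_{L^q}$ independently of $K$ using $L^1$- or $L^r$-bounds on $p,n$ obtained from the energy estimates. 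An alternative is to make the maximum principle self-consistent: choose $K$ with $K_1(K)\le K$, which requires smallness or structural conditions from Assumption \ref{assumption} (for instance small $\alpha_1$ or a large attachment rate $\eta_0$).

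Once $n\le K$ is known, the equation for $p$ has bounded source $F\le C(K)|\nabla\phi|\in L^{q}$, and the same truncation argument gives $p\le K$. We then choose $M\ge\|\nabla\phi\|_{L^\infty}$, or more simply note that $s\mapsto s(\alpha_1e^{-\alpha_2/s}-\eta_0)$ grows at most linearly, so the $M$-truncation can be removed by a limit $M\to\infty$ using the uniform bounds. Finally, since $0\le p,n\le K$, the truncations are inactive, and $(\phi,p,n)$ is a weak solution of \eqref{S_EPSys}--\eqref{bdyval}.
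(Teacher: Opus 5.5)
\textbf{There is a genuine gap.} Your construction of truncated solutions is fine and parallels the paper's (mollified densities $p^+/(1+\delta p^+)$, a source bounded by $1/\varepsilon$, Browder--Minty plus Leray--Schauder). The step you flag yourself, however, is left open, and it is the heart of the theorem: you never obtain bounds independent of the truncation level $K$, so the truncations are never removed.

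Neither of your two exits works as stated.
\begin{itemize}
\item In your Stampacchia argument for $n$, every constant grows with $K$. This includes $\|\nabla\phi\|_{L^q}$ (the Poisson right-hand side is only bounded by $K$), the factor $k$ in the source, and the bound on $\mathrm{meas}\{n>k\}$ needed for absorption. It also includes the $p$-part of the drift, which you control only through $[p]_K\le K$ because you test the $n$-equation alone. So nothing makes $K_1(K)\le K$ solvable without a smallness hypothesis, and the ones you name (small $\alpha_1$, large $\eta_0$) are not in Assumption \ref{assumption}.
\item The route via $K$-independent $L^r$ bounds ``from the energy estimates'' is exactly where the growth term bites. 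Testing with $p-p_D$, $n-n_D$ produces $\int_\Omega(p+n)^2|\nabla\phi|\,\mathrm{d}x$, which is cubic in the unknowns through Poisson and cannot simply be absorbed into $\|\nabla p\|_{L^2}^2+\|\nabla n\|_{L^2}^2$.
\end{itemize}

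The missing ingredient, supplied in the paper by Theorem \ref{apriori_est_thm}, is a weighted field estimate. Testing the $p$-equation with $(\phi_\varepsilon-\phi_D)/\epsilon_p$ and the $n$-equation with $-(\phi_\varepsilon-\phi_D)/\epsilon_n$ gives $\int_\Omega(p_\varepsilon+n_\varepsilon)|\nabla\phi_\varepsilon|^2\,\mathrm{d}x\le c_5(\|\nabla p_\varepsilon\|_{L^2}+\|\nabla n_\varepsilon\|_{L^2})+c_6$. The cubic term is then bounded by $\bigl(\int_\Omega(p+n)|\nabla\phi|^2\,\mathrm{d}x\bigr)^{1/2}\|p+n\|_{L^3}^{3/2}$. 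The two estimates are closed by smallness of $\phi_D$ and $\mathbf{v}$ (condition (A3)).

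Only then is Stampacchia run, testing \emph{both} equations at the same level $k$ and adding. The drift terms then combine into $\frac{1}{2\epsilon_\phi}\int_\Omega(p-n)\bigl[(n^2-k^2)^+-(p^2-k^2)^+\bigr]\,\mathrm{d}x\le 0$. The growth term is controlled by the now uniform quantities $\int_\Omega(p+n)|\nabla\phi|^2\,\mathrm{d}x$ and $\|p+n\|_{L^6}$.

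Your sense that a genuine smallness condition is needed is sound. Note that (A3) as printed cannot hold: since $c_7\ge\frac12$, one has $2(c_7+1)(2c_5^2+1)\ge 3$. So even the paper's closing inequality needs correcting, but the mechanism is the one just described.

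\textbf{Nonnegativity of $p$.} There is also a concrete error in this step. For $n$ it works, because the source carries the factor $[n]_K$. The $p$-equation, however, has the same source $F_M([n]_K,\nabla\phi)$. This source does not vanish on $\{p<0\}$ and is negative wherever attachment dominates ($\alpha_1e^{-\alpha_2/|\nabla\phi|}<\eta_0$, e.g.\ at small fields when $\eta_0>0$). Testing with $p^-$ gives $\epsilon_p\|\nabla p^-\|_{L^2}^2=\int_\Omega F_M\,p^-\,\mathrm{d}x$, whose right-hand side is nonnegative there, so $p\ge 0$ does not follow. A sink proportional to $n$ rather than to $p$ can push $p$ below zero. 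The paper avoids this by replacing $\eta_0$ with $\eta_0p^+/(\varepsilon+p^+)$ in the regularized $p$-source $F^\varepsilon_p$, so the loss term switches off where $p\le 0$; this cutoff then has to be tracked through the limits.

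\textbf{Two smaller points.} First, ``choose $M\ge\|\nabla\phi\|_{L^\infty}$'' is not available, since only $\phi\in H^2$ is known. Second, ``once $n\le K$ \dots\ gives $p\le K$'' only yields $p\le K'(K)$, which feeds back into $\phi$ and the $n$-equation.
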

\begin{numresult} The bounds in Theorem \ref{SS_Existence} are not merely an analytical artefact: they are enforced ``by construction" in our numerical scheme. Writing $p=p_{\min}+e^P$ and $n=n_{\min}+e^N$  for the carrier densities guarantees strict positivity at the discrete level, while the uniform upper bound $K$ justifies the boundedness of the regularised ionisation source. Combined with Streamline Upwind Petrov-Galerkin (SUPG) stabilisation and a Gummel–Newton iteration, this yields a Finite Element Method (FEM) solver that reproduces the predicted behaviour on the geometries of the domains in Section \ref{numerical}, including re-entrant corners and Dirichlet discontinuities where the field is singular. The numerical results thus complement the existence theory, confirming the qualitative structure of the weak solution in regimes beyond the reach of the analysis.
    
\end{numresult}
\begin{figure}[h]
	\centering
	\begin{tikzpicture}
		\fill[gray!20, xshift=5cm]
		    plot [smooth, tension=0.7] coordinates { (-5,4) (-3,3) (0,0.5) (3,3) (5,4)}
		    -- (5,0) -- (-5,0) -- cycle;
		\draw[->] (0,0) -- (10,0) node [below] {$x=(x_1,x_2) $};
		\draw[->] (5,0) -- (5,5)  node [right] {$y$}; 
		\draw [red, xshift=5cm] plot [smooth, tension=0.7] coordinates { (-5,4) (-3,3) (0,0.5) (3,3) (5,4)};
		\draw [blue, xshift=5cm] plot [smooth, tension=0] coordinates {(-5,0) (-5,4)};
		\draw [blue, xshift=5cm] plot [smooth, tension=0] coordinates {(5,4) (5,0)};
		\node[xshift=5cm] at (4.5,0.5) {$\Omega$};
		\node[red, xshift=5cm] at (3,4.5) {Deformed electrode $A$, $y=w(x)$};
		\node[blue, xshift=5cm, align=left] at (-4.5, 3) {bounding\\ box $C$};
		\node[blue, xshift=5cm, align=left] at (5.5, 3) { $C$};
		\node[black, xshift=5cm] at (0, -0.5) {fixed electrode $B$, $y=0$};
		\draw[-{Latex[length=2mm]}, thick, xshift=5cm] (-4,1) arc[start angle=-120, end angle=-90, radius=5cm] node [at end, right]{};
		\draw[-{Latex[length=2mm]}, thick, xshift=5cm] (-4,2.2) arc[start angle=90, end angle=30, radius=3cm] node [at end, right]{};
		\node[black, xshift=5cm] at (-3, 1.2) {fluid velocity $\mathbf{v}$};
		\draw[-{Latex[length=2mm]}, thick, xshift=5cm] (4,2) arc[start angle=100, end angle=130, radius=5cm] node [at end, left]{};
		\draw[-{Latex[length=2mm]}, thick, xshift=5cm] (4,1) arc[start angle=-60, end angle=-90, radius=5cm] node [at end, left]{};
\end{tikzpicture}\caption{\small{Sketch of a MEMS capacitor undergoing sparking. The domain $\Omega$ is depicted by the shaded region. Its boundary decomposes into a Dirichlet portion, formed by electrodes $A$ and $B$, where the electric potential $\phi$ together with the concentrations $p$ and $n$ are prescribed, and a Neumann portion $C =\partial\Omega_N $, where the artificial boundary is placed far enough away that homogeneous Neumann conditions can reasonably be imposed.}}\normalsize
  \label{MEMSsketch}	\end{figure}

\subsection{Physical Motivation and Related Work}
Micro-electro-mechanical systems (MEMS) capacitive switches and varactors are critical components in modern radio-frequency (RF) communication systems, offering low power consumption, high isolation, and excellent linearity \cite{rebeiz2004rf, van2012capacitive}. However, their long-term reliability is severely compromised by dielectric charging and discharging phenomena \cite{papaioannou2005temperature, yuan2005modeling}. During device operation, charge carriers are injected from the actuation electrode into the dielectric layer under high electric fields, where they become trapped at defect sites and interfaces \cite{herfst2008kelvin}. The subsequent release and transport of these trapped charges alter the electrostatic force balance, causing drift in the pull-in voltage and eventual device failure \cite{molinero2006dielectric}.

A predictive understanding of these degradation mechanisms requires a self-consistent continuum model that couples the electric field distribution with the transport, generation, and recombination of charge carriers. When the mean free path of carriers is much smaller than the characteristic device dimensions—as is typical for solid-state dielectrics under moderate electric fields—the drift-diffusion approximation provides an appropriate level of physical description \cite{selberherr1984analysis, van1950theory}.

The classical drift-diffusion model, originally formulated by Van Roosbroeck \cite{van1950theory} for semiconductor devices, has been extensively studied both analytically and numerically over the past seven decades \cite{markowich1985stationary,markowich2012semiconductor,jungel2001quasi}. Under isothermal conditions, it consists of Poisson's equation for the electrostatic potential, coupled with continuity equations for electron and positive ion densities. The model has been successfully applied to the simulation of p-n junctions, metal-oxide-semiconductor field-effect transistors (MOSFETs), solar cells,  a variety of other semiconductor devices \cite{brezzi2005discretization,miller1999application,vasileska2017computational}, and further gas discharge \cite{gimperlein2026analysis, islamov2003global, islamov2006regularity}.

The mathematical analysis and numerical approximation of stationary drift-diffusion systems of \eqref{S_EPSys}-type pose several interconnected challenges that have motivated extensive research at the interface of nonlinear partial differential equations, semiconductor physics, and computational science. A central difficulty is the strong nonlinear coupling: the electric potential $\phi$ is determined by the space charge distribution through Poisson's equation, while the carrier transport equations contain drift terms proportional to $\nabla \phi$, and this bidirectional coupling is further intensified by the exponential sensitivity of the ionisation source to the local field magnitude \cite{markowich1985stationary, van1950theory}. Convection dominance and boundary layers constitute another major challenge. When the modulus of the gas flow velocity $v$ or the electric field $\nabla \phi$ is large compared to the diffusion coefficients (which are not equal to the mobility coefficients), the transport equations become convection-dominated, a regime characterized by sharp boundary layers near downstream boundaries where the solution varies on a length scale proportional to the diffusion coefficient divided by the drift velocity \cite{markowich1985stationary,roos2008robust}. In such regimes, standard centred finite difference or Galerkin finite element discretizations produce spurious oscillations unless appropriately stabilized \cite{brooks1982streamline,hughes2018multiscale}. A further essential requirement is the positivity of carrier densities: the physical densities $p$ and $n$ must remain non-negative, yet classical discretization methods do not inherently guarantee this property, particularly in convection-dominated regimes or when the ionization source term becomes negative (attachment regime). Violations of positivity lead to physically meaningless solutions and can cause divergence of nonlinear solvers \cite{miller1999application,patankar2018numerical}. Finally, a non-Lipschitz nonlinearity arises from the exponential gate function 
$e^{-\alpha_2/|\nabla\phi|}$, which is not
Lipschitz continuous at $|\nabla\phi|=0$. This singularity requires regularization for both the existence analysis and the numerical implementation, typically by introducing a small parameter $\delta > 0$ and replacing $|\nabla\phi|$
 with $E_{\delta}(|\nabla\phi|)=\sqrt{|\nabla\phi|^2 +\delta^2}$ \cite{abdel2025existence,chainais2003finite}.

The drift-diffusion model has been the subject of intense analytical investigation since the seminal work of Van Roosbroeck \cite{van1950theory}. Existence and uniqueness of weak solutions for the basic semiconductor equations were established by Gajewski and Gröger \cite{gajewski1986basic}, Markowich \cite{markowich1985stationary}, and Frehse and Naumann \cite{frehse1993existence}, among others. These results rely on Schauder fixed-point arguments, monotone operator theory, and entropy-type estimates to obtain uniform bounds on the carrier densities. Comprehensive treatments can be found in the monographs by Markowich \cite{markowich1985stationary}, Markowich, Ringhofer, and Schmeiser \cite{markowich2012semiconductor}, and Jüngel \cite{jungel2001quasi}.

The inclusion of impact ionisation terms brings additional analytical difficulties. In the context of semiconductor devices, ionization rates were analysed by Chen et al. \cite{degond2004note} and Degond et al. \cite{degond2000numerical} for hydrodynamic and energy-transport models. For corona discharge problems, Budd \cite{budd1991coronas} provided a detailed analysis of the space charge problem in a concentric-cylinder geometry, employing a hard cutoff ionisation gate and a photoionisation boundary condition at the ionisation boundary. His study revealed the existence of free boundaries separating ionised and non-ionised regions, as well as oscillatory instabilities in the transient regime. More recently, Abdel et al. \cite{abdel2025existence} proved existence and uniform bounds for a stationary drift-diffusion system with generation and both ionic and electronic carriers, employing a truncation–regularisation approach combined with Stampacchia-type estimates.

On the numerical side, the Scharfetter–Gummel exponentially fitted finite-volume scheme \cite{scharfetter2005large} and stabilised finite element formulations—SUPG \cite{brooks1982streamline}, Galerkin least-squares and variational multiscale methods \cite{hughes2018multiscale}, and discontinuous Galerkin schemes \cite{chen2020steady, liu2016analysis} remain standard tools for capturing boundary layers in the drift-diffusion system \cite{brezzi2005discretization,kumar2017fem}. Positivity is commonly enforced through Slotboom \cite{miller1999application}, quasi-Fermi \cite{jungel2001quasi,jungel2001positivity}, or logarithmic \cite{brezzi1989two,polak1987semiconductor} transformations of the dependent variables; Pérez-Escudero et al. \cite{perez2025comparison} recently compared the primitive and logarithmic formulations together with Gummel \cite{gummel2005self} and Newton–Raphson solvers, finding that the logarithmic SUPG formulation yields oscillation-free solutions on coarse meshes. We adopt a shifted-logarithmic SUPG formulation, detailed in Section \ref{app2}.

\subsection{Overview and Structure of the Paper}
This work provides the first rigorous existence proof for the stationary Townsend-discharge system \eqref{S_EPSys}--\eqref{bdyval}, together with a tailored finite element solver. Our contributions are twofold.
\ 

While existence theory for standard drift-diffusion systems is classical \cite{frehse1993existence, frehse1996stationary, gajewski1986basic, markowich1985stationary}, the Townsend-type impact ionization source introduces an exponential activation gate $e^{-\alpha_2/|\nabla\phi|}$ that is non-Lipschitz and singular at $|\nabla\phi| = 0$. Prior analyses of corona discharge rely on hard cutoffs or simplified geometries \cite{budd1991coronas}. We prove, for the first time, the existence of weak solutions to the full stationary system with uniform upper and lower bounds on the carrier densities. The proof combines a double regularization (truncation of the source and mollification of the densities), monotone operator theory (Browder–Minty), and Stampacchia-type truncation arguments to control the exponential field dependence—extending the classical Gajewski–Gröger–Markowich framework \cite{gajewski1986basic, markowich1985stationary} and drawing on the recent estimates of Abdel et al. \cite{abdel2025existence}.

\ 
We develop and validate a shifted-logarithmic finite element solver integrating three features: 
 an algebraic positivity-preserving transformation $p = p_{\min} + e^{P}$, $n = n_{\min} + e^{N}$, reflecting the a priori $L^{\infty}$ bounds of Theorem \ref{SS_Existence} and Theorem \ref{apriori_est_thm}; 
  consistent SUPG stabilization \cite{brooks1982streamline, hughes2018multiscale} suppressing spurious oscillations in convection-dominated regimes without violating positivity;  
  a robust Gummel–Newton outer–inner iteration stable across a wide range of Péclet numbers and geometric singularities. 
To our knowledge, this is the first fully stabilized, positivity-preserving scheme for the stationary Townsend model that is simultaneously backed by a rigorous existence proof. Validation includes manufactured-solution convergence (optimal $\mathrm{O}(h^2)$ in $L^2$, $\mathrm{O}(h)$ in $H^1$), mesh-independent iteration counts, and consistent behaviour across voltage sweeps and refinements, even at re-entrant corners and Dirichlet discontinuities where the field is singular.

For the proof of Theorem \ref{SS_Existence}, we shall make regularity assumptions on the domain, boundary data and function $\mathbf{v}$.
We define the space of test functions vanishing on the Dirichlet boundary as
\[H_D^1(\Omega)=\left\{u\in H^1(\Omega):\quad u=0\quad \text{a.e.~on}\quad\partial\Omega_D\right\}\] 
equipped with the standard $H^1(\Omega)$ norm. Owing to the Poincaré inequality for mixed boundary conditions \cite{gilbarg1998elliptic} the seminorm $||\nabla u ||_{L^2(\Omega)}$ is equivalent to the full $H^1(\Omega)$ norm on $H_D^1(\Omega)$. Throughout the analysis, $c_0$ denotes a generic positive constant whose value may change from line to line but depends only on the domain $\Omega$ and the dimension $d$.
\begin{assum}\label{assumption}
 Suppose that $d\in\{2,3\}$ and $\Omega$ is a bounded domain in $\mathbb{R}^d$  with boundary $\partial\Omega=\overline{\partial\Omega_D\cup\partial\Omega_N}$ and measure $\mathrm{meas}~\Omega$. The disjoint open subsets $\partial\Omega_D$, $\partial\Omega_N\subset\partial\Omega$ are of class $C^{1,1}$ and enclose an angle $\leq\pi/2$. Set $0<\epsilon_\phi<1$.

  Let  $\mathbf{v}$, $p_D$, $n_D$ and $\phi_D$ be given such that $\phi_D\in H^2(\Omega)$ and  $p_D$, $n_D \in W^{2,\sigma}(\Omega)$, $\sigma\geq3$, satisfying 
	\begin{enumerate}[label=(A\arabic*),ref=(A\arabic*)]
		\item\label{con1} $p_D\geq0$, $n_D\geq 0$ almost everywhere in ${\Omega}$ and 
		\[R_a\leq \inf_{\partial\Omega_D}p_D,~\inf_{\partial\Omega_D}n_D,\quad \sup_{\partial\Omega_D}p_D,~\sup_{\partial\Omega_D}n_D\leq R_b,\]
       \[K_1:=\frac{4c_0^3}{\epsilon_\phi}\left(\|\nabla p_D\|_{L^3(\Omega)}+\|\nabla n_D\|_{L^3(\Omega)}\right)\leq \frac{1}{32},\]
		\[{\nu\cdot\nabla p_D=\nu\cdot\nabla n_D=0\quad\text{on}\quad\partial\Omega_N}\]
		for some positive constants $R_a \leq R_b$; 
		\item\label{con2}  the velocity $\mathbf{v}\in  H^{2}(\Omega)$, $\nabla\cdot\mathbf{v}=0$ and 
        $\|\nabla|\mathbf{v}|\|_{L^2(\Omega)}\leq {1}/{\left(8c_0^2\left({1}/{\epsilon_p}+{1}/{\epsilon_n}\right)^2\right)}$.
        \item \label{con3}  there exists a $\tau_0>0$, such that if 
$\|\nabla \phi_D\|_{L^s(\Omega)}\leq \tau_0$,  $\|\mathbf{v}\|_{H^2(\Omega)}\leq\tau_0$,
then $$2\left(c_7+1\right)\left(2c_5^2+1\right)<\frac{1}{2},$$
where  
\[\begin{split}
c_7:=&\max\left\{\frac12,~ \frac{9}{50}+\frac{5\sqrt{30}}{108}\kappa_0^3+\frac{c_1}{3}\left(\frac{2c_1}{3}+\frac{1}{2}\right)\right\}\\
c_5:=&\left(\frac{c_1}{2}\left\||\nabla\phi_D|^2\right\|_{L^{\widetilde{\sigma}}(\Omega)}+\|\nabla\phi_D\|_{L^2(\Omega)}+c_4\right)\bigg/\min\left\{\frac{1}{4},\frac{1}{\epsilon_\phi}-\frac{1}{2}\right\},\qquad\qquad\quad\\
c_4:=&\widetilde{c}_4(\|\mathbf{v}\|_{H^2(\Omega)}+\|\nabla\phi_D\|_{L^2(\Omega)})\|\mathbf{v}\|_{H^2(\Omega)},\\
\widetilde{c}_4:=& \max\left\{{c_0^3}/{\epsilon_p^2},~ {c_0^3}/{\epsilon_n^2},~{c_0^2}/{\epsilon_p},~ {c_0^2}/{\epsilon_n}\right\}.\end{split}\] 
	\end{enumerate}
\end{assum}

\begin{defn}
   A \underline{weak solution} of the problem \eqref{S_EPSys}-\eqref{bdyval} is a triple $(\phi, p, n)$ of functions such that 
	\begin{subequations}\label{SS}
		\begin{equation}\label{SS_pos}
			0\leq p,~ n\leq K=\mathrm{const.}<\infty,\quad a.e.\quad\text{in}\quad\Omega,
		\end{equation}
		\begin{equation}\label{SS_wk_phi}
			\int_\Omega \epsilon_\phi\nabla\phi\cdot\nabla\varphi\mathrm{d}x=\int_\Omega \left(p-n\right)\varphi\mathrm{d}x,\quad\forall~\varphi\in V,
		\end{equation}
		\begin{equation}\label{SS_wk_p}
			\int_\Omega \left(\epsilon_p\nabla p+p\left(\epsilon_p\nabla \phi-\mathbf{v}\right)\right)\cdot\nabla\varphi\mathrm{d}x=\int_\Omega F\left(n,|\nabla\phi|\right)\varphi\mathrm{d}x,\quad\forall~\varphi\in V,
		\end{equation}
		\begin{equation}\label{SS_wk_n}
			\int_\Omega \left(\epsilon_n\nabla n-n\left(\epsilon_n\nabla \phi+\mathbf{v}\right)\right)\cdot\nabla\varphi\mathrm{d}x=\int_\Omega F\left(n,|\nabla\phi|\right)\varphi\mathrm{d}x,\quad\forall~\varphi\in V,
		\end{equation}
		\begin{equation}\label{SS_bdy}
			\phi(x)=\phi_D(x),\quad p(x)=p_D,\quad n(x)=n_D,\quad x\in\partial\Omega_D.
		\end{equation}
	\end{subequations}
\end{defn}
The paper is organized as follows.  In Section \ref{sec3}, we prove Theorem \ref{SS_Existence} via the regularization–approximation scheme, with the passage to the limit eliminating the parameters $\delta$ and $\epsilon$. Section \ref{numerical} presents the numerical validation on two-dimensional domains (unit square, L-shaped corner, annular corona) and a three-dimensional axisymmetric domed cylinder mimicking a MEMS capacitive switch.  The shifted-log discretization and the Gummel–Newton algorithm are detailed thereafter in Appendix \ref{app2}.



\section{Existence of Weak Solutions: Proof of Theorem \ref{SS_Existence}}\label{sec3}
The analysis of the system \eqref{S_EPSys} will rely on the abstract theory of elliptic differential equations (see, for example, \cite{fichera1965linear}, \cite{gilbarg1998elliptic} and \cite{ladyzhenskaya1968linear}), on functional analysis (see \cite{krasnoselskij1984geometrical} and \cite{rabinowitz1971some}) and on the basic stationary semiconductor device equations, i.e. Chapter 2 of \cite{markowich1985stationary}, Chapter 3 of \cite{markowich2012semiconductor}, references \cite{frehse1993existence}, \cite{frehse1994existence}, and  \cite{frehse1996stationary}.

This section is organized as follows. Subsection~\ref{Pre} collects the auxiliary results used throughout the proof: Sobolev embeddings, the H\"older inequality, and a standard elliptic estimate. The proof of Theorem~\ref{SS_Existence} then occupies Subsections~\ref{approx}--\ref{pass2limit}. In Subsection~\ref{approx} we construct the $\varepsilon$-approximate solutions $(\phi_\varepsilon, p_\varepsilon, n_\varepsilon)$ to \eqref{S_EPSys} by introducing an inner regularisation parameter $\delta$ and passing $\delta \to 0$; in Subsection~\ref{aprioriestimates} we derive the $\varepsilon$-uniform a priori estimates of Theorem~\ref{apriori_est_thm}; and in Subsection~\ref{pass2limit} we pass to the limit $\varepsilon \to 0$ to obtain a weak solution of \eqref{S_EPSys}.

\subsection{Preliminaries}\label{Pre}
For a real-valued measurable function $f$ on $\Omega$, we denote its positive and negative parts by
\[f^+=\max\{f,~ 0\},\quad f^-=\min\{f,~0\}.\] Clearly, $f = f^+ + f^-$ and $|f| = f^+ - f^-$. These truncation operations are essential in the construction of test functions for the entropy-type estimates developed in Section \ref{sec3}, particularly for establishing non-negativity of the carrier densities and for the Stampacchia truncation argument that yields the uniform upper bound. 

Note that  $H_D^1(\Omega)\times H_D^1(\Omega)$ is a Hilbert space with respect to the scalar product 
\[\langle(u,v), (\xi,\zeta)\rangle_{H^1\times H^1}=\int_{\Omega}\left(\nabla u\cdot\nabla\xi+\nabla v\cdot\nabla\zeta\right)\mathrm{d}x,\]
 $\|\cdot\|_{H^1\times H^1}$ is an associated norm, and 
 $\langle(u^*, v^*), (u,v)\rangle_{(H^1\times H^1)\to (H^1\times H^1)^{*}}$ represents the value of $(u^*, v^*)\in \left[H^1_D(\Omega)\times H^1_D(\Omega)\right]^{*}$ at $(u,v)$, where $\left[H_D^1(\Omega)\times H_D^1(\Omega)\right]^{*}$ is the dual space of $H_D^1(\Omega)\times H_D^1(\Omega)$.
 
From the Sobolev embedding theorem \cite{gilbarg1998elliptic, evans2022partial}, we recall the following fundamental estimate:
\begin{equation}\label{Sebd}
	\left(\int_\Omega \left|u\right|^q\mathrm{d}x\right)^{1/q}\leq c_0\left(\int_\Omega\left|\nabla u\right|^2\mathrm{d}x\right)^{1/2},\quad\forall~u\in H_D^1(\Omega),
\end{equation}
where $q\in[1, \infty)$ if $d=2$ and $q\in[1, 6]$ if $d=3$. The constant $c_0$ 
depends on $q$ and on the measure of $\Omega$ but is independent of the particular function $u$.

Considering $u$, $v\in H_D^1(\Omega)$ subject to the boundary value conditions  
\[u(x)=p_D(x)\quad\text{and}\quad v(x)=n_D(x)\quad \forall~x\in\partial\Omega_D,\]
where $p_D$ and $n_D$ are boundary values from  \eqref{SS_bdy} and satisfy Assumption \ref{assumption}, \eqref{Sebd} implies 
    \begin{equation}\label{Sobebd_pn}
        \begin{split}
            \|u+v\|_{L^\sigma(\Omega)}\leq c_1\left(1+\|\nabla(u+v)\|_{L^2(\Omega)}\right),
        \end{split}
    \end{equation}
    where $\sigma\in[1,\infty)$ if $\dim \Omega=2$, $\sigma\in[1,6]$ if $\dim\Omega=3$, and $c_1$ is a positive constant depending on $\sigma$, $\mathrm{meas}~\Omega$, $\|p_D\|_{H^1(\Omega)}$, and $\|n_D\|_{H^1(\Omega)}$. 

Moreover, by fixing $v \in L^6(\Omega)$, 	$w \in L^2(\Omega)$ and choosing $u \in L^3(\Omega)$, or $u \in L^{6/5}(\Omega)$,  for the $d=2$ and $d=3$ cases, respectively, it follows that
\begin{equation}\label{Holder-Cauchy}
			\begin{split}
				\int_\Omega uvw \, \mathrm{d}x\leq \|u\|_{L^3(\Omega)}\|v\|_{L^6(\Omega)}\|w\|_{L^2(\Omega)}&\leq \varepsilon\|v\|_{L^6(\Omega)}^2+\frac{1}{4\varepsilon}\|w\|_{L^2(\Omega)}^2\|u\|_{L^3(\Omega)}^2,\\
				\int_\Omega uv \, \mathrm{d}x\leq \|u\|_{L^{6/5}(\Omega)}\|v\|_{L^6(\Omega)}&\leq \varepsilon\|v\|_{L^6(\Omega)}^2+\frac{1}{4\varepsilon}\|u\|_{L^{6/5}(\Omega)}^2.
			\end{split}
		\end{equation}
		Further, we have the continuous embedding 
		\begin{equation}\label{Sob_emb_H2}
			H^2(\Omega)\hookrightarrow C^{0, \alpha}(\overline{\Omega})\hookrightarrow L^\infty(\Omega),\quad \|u\|_{L^\infty(\Omega)},~\|u\|_{C^{0, \alpha}(\Omega)}\leq c_0\|u\|_{H^2(\Omega)}
			\end{equation} with $\alpha>0$.
            
{Due to Assumption \ref{assumption} on the geometry of $\Omega$, 
we have the following standard result
from the theory of linear elliptic boundary value problems:} 
\begin{prop}\label{EllipticRegEst} 	 Let $f\in L^2(\Omega)$, $\phi_D\in H^{2}(\Omega)$. Then the Dirichlet problem $-\Delta u=f$ in $\Omega$  admits a unique solution $u\in H^{2}(\Omega)$ with $u-\phi_D \in H_D^{1}(\Omega)$, and the following estimate holds:
	\begin{equation}\label{elliptic_reg}
		\|u\|_{H^{2}(\Omega)}\leq c_0\left(\|f\|_{L^2(\Omega)}+\|\phi_D\|_{H^{2}(\Omega)}\right).
	\end{equation}
	
\end{prop}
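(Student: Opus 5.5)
We would prove Proposition~\ref{EllipticRegEst} in three steps: lift the boundary data, obtain a weak solution by Lax--Milgram, and then prove $H^2$ regularity by localisation, where the only non-classical point is the junction of $\partial\Omega_D$ and $\partial\Omega_N$.

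\emph{Step 1 (lifting and existence).} Write $u=w+\phi_D$. The problem for $u$ (Dirichlet data $\phi_D$ on $\partial\Omega_D$, homogeneous Neumann data on $\partial\Omega_N$, as in \eqref{bdyval}) becomes a problem for $w\in H^1_D(\Omega)$:
\[
\int_\Omega \nabla w\cdot\nabla\varphi\,\mathrm{d}x=\int_\Omega f\varphi\,\mathrm{d}x-\int_\Omega \nabla\phi_D\cdot\nabla\varphi\,\mathrm{d}x,\qquad \forall\,\varphi\in H^1_D(\Omega).
\]
The right-hand side is a bounded functional on $H^1_D(\Omega)$ with norm at most $c_0(\|f\|_{L^2(\Omega)}+\|\phi_D\|_{H^1(\Omega)})$. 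By the mixed Poincar\'e inequality recalled before Assumption~\ref{assumption}, the form $\int_\Omega\nabla w\cdot\nabla\varphi\,\mathrm{d}x$ is coercive on $H^1_D(\Omega)$. Lax--Milgram therefore gives a unique $w$ with
\[
\|w\|_{H^1(\Omega)}\le c_0\left(\|f\|_{L^2(\Omega)}+\|\phi_D\|_{H^2(\Omega)}\right),
\]
and uniqueness of $u$ follows. Strongly, $w$ satisfies $-\Delta w=f+\Delta\phi_D\in L^2(\Omega)$, $w=0$ on $\partial\Omega_D$, and $\nu\cdot\nabla w=-\nu\cdot\nabla\phi_D$ on $\partial\Omega_N$. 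The Neumann datum lies in $H^{1/2}(\partial\Omega_N)$, with norm controlled by $\|\phi_D\|_{H^2(\Omega)}$.

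\emph{Step 2 (local $H^2$ estimates away from the interface).} Take a finite partition of unity $\{\chi_j\}$ subordinate to three kinds of patches: interior balls, boundary patches meeting only $\partial\Omega_D$ or only $\partial\Omega_N$, and patches around the interface $\Gamma=\overline{\partial\Omega_D}\cap\overline{\partial\Omega_N}$.
\begin{itemize}
\item In interior balls, Nirenberg difference quotients give $\|\chi_j w\|_{H^2}\le c_0(\|f+\Delta\phi_D\|_{L^2}+\|w\|_{H^1})$.
\item Near a purely Dirichlet or purely Neumann piece, flatten the boundary with a $C^{1,1}$ chart. This preserves the divergence structure with Lipschitz coefficients. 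Tangential difference quotients then control all second derivatives except $\partial_{dd}$, which is recovered from the equation.
\end{itemize}

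\emph{Step 3 (the interface; the main obstacle).} Near a point of $\Gamma$, after a $C^{1,1}$ change of variables, the problem becomes a mixed problem in a two-dimensional sector of opening $\omega\le\pi/2$. In $d=3$ this sector is the cross-section of a dihedral wedge along the edge $\Gamma$. For the Laplacian with Dirichlet data on one side and Neumann data on the other, Grisvard's corner theory says the singular functions are $r^{\lambda}\sin(\lambda\theta)$ with $\lambda=(k+\tfrac12)\pi/\omega$. Since $\omega\le\pi/2$, the smallest exponent satisfies $\lambda\ge 1$, so none of these functions obstructs $H^2$ regularity. The borderline case $\omega=\pi/2$ gives $\lambda=1$, and the corresponding function $r\sin\theta$ is smooth.

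Accordingly, we would first reduce to homogeneous Neumann data by subtracting a local $H^2$ lift of $\nu\cdot\nabla\phi_D$. We would then invoke Grisvard's (two-dimensional) or Dauge's (edge) regularity theorem for the model operator, and absorb the lower-order perturbation coming from the $C^{1,1}$ chart by a Neumann-series/continuity argument on small patches.

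Summing the local estimates and inserting the $H^1$ bound from Step 1 yields
\[
\|w\|_{H^2(\Omega)}\le c_0\left(\|f\|_{L^2(\Omega)}+\|\phi_D\|_{H^2(\Omega)}\right),
\]
and hence \eqref{elliptic_reg} for $u=w+\phi_D$. The genuinely delicate part is Step 3. It needs the curvilinear geometry to be close enough to the model wedge for the perturbation argument, and it needs the angle condition $\le\pi/2$ of Assumption~\ref{assumption} to hold uniformly along $\Gamma$.
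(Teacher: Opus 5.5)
\textbf{Verdict.} Your outline has a genuine gap at a Dirichlet--Neumann opening angle of exactly $\omega=\pi/2$. In that case the proposition as stated is false for general $\phi_D\in H^2(\Omega)$.

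\textbf{What the paper does.} It gives no proof. It quotes the proposition as a standard result of linear elliptic theory under Assumption~\ref{assumption}. Your route (lifting, Lax--Milgram, localisation, Grisvard/Dauge corner and edge regularity) is the usual argument behind such a citation, and it works when $\omega<\pi/2$.

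\textbf{Where Step~3 fails.} Assumption~\ref{assumption} allows $\omega=\pi/2$, and you dismiss this case because $r\sin\theta$ is smooth. That exponent count only concerns homogeneous data. Your Step~3 also needs, near the interface, an $H^2$ function $\psi$ with $\psi=0$ on $\partial\Omega_D$ and $\nu\cdot\nabla\psi=-\nu\cdot\nabla\phi_D$ on $\partial\Omega_N$; otherwise subtracting it spoils the Dirichlet condition on $w$. At a right angle, the tangent to $\partial\Omega_D$ and the normal to $\partial\Omega_N$ coincide on the interface. Both conditions then prescribe traces of the same component of the $H^1$ field $\nabla\psi$ on the two sides of the corner. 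Such a $\psi$ exists only under the compatibility condition $\int_0^\delta|\nu\cdot\nabla\phi_D|^2\,\mathrm{d}t/t<\infty$, where $t$ is the distance to the interface along $\partial\Omega_N$. A general $\phi_D\in H^2(\Omega)$ violates this.

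\textbf{Counterexample.} Take $\Omega=(0,1)^2$, with $\partial\Omega_D$ the horizontal sides and $\partial\Omega_N$ the vertical sides; this is the electrode-$B$/box-$C$ geometry of Figure~\ref{MEMSsketch}. Let $f=0$ and $\phi_D(x,y)=x$. Suppose the weak solution $u$ lay in $H^2(\Omega)$. Then $\partial_x u\in H^1$ would have trace $1$ on $\{y=0\}$, the derivative of the Dirichlet datum, and trace $0$ on $\{x=0\}$, the Neumann condition. Its odd reflection across $\{x=0\}$ would be an $H^1$ function near the origin with trace $\mathrm{sgn}\,x\notin H^{1/2}$, which is impossible. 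A cosine expansion in $x$ confirms this: $\|\partial_x^2u\|_{L^2}^2\sim\sum_{k\ \mathrm{odd}}k^{-1}=\infty$.

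\textbf{The perturbation step also fails here.} The reason is the same: $\lambda=1$ lies on the critical line $\mathrm{Re}\,\lambda=2-2/p$ for $p=2$. Consequently the plain-norm model estimate $\|u\|_{H^2}\lesssim\|\Delta u\|_{L^2}+\|u\|_{H^{3/2}(\partial\Omega_D)}+\|\nu\cdot\nabla u\|_{H^{1/2}(\partial\Omega_N)}+\|u\|_{H^1}$ fails at a right-angle corner. So your Neumann-series absorption of the chart perturbation cannot be carried out in those norms there.

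\textbf{How to repair it.} You have two options.
\begin{enumerate}
\item Assume $\omega<\pi/2$ strictly. The two directions are then transversal, the lift exists with no compatibility condition, and $\lambda>1$, so your argument goes through.
\item Add a compatibility condition on $\phi_D$. For instance, require $\nu\cdot\nabla\phi_D=0$ on $\partial\Omega_N$, which is exactly what (A1) imposes on $p_D$ and $n_D$ but not on $\phi_D$. It is also enough that $\phi_D$ be constant near the junction, as electrode potentials are. Then $w=u-\phi_D$ satisfies the homogeneous mixed problem, but at $\omega=\pi/2$ that problem must still be handled in compatibility-weighted norms, not by your plain-norm perturbation.
\end{enumerate}
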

\begin{rem}
    Note that, letting $(p,n)\in L^2(\Omega)\times L^2(\Omega)$ and $\delta>0$, there  exists a unique solution $\phi\in H^2(\Omega)$ of the equation
\begin{equation}\label{int_form_phi}\begin{split}				\int_\Omega\epsilon_\phi\nabla\phi\cdot\nabla\varphi\mathrm{d}x&=\int_\Omega\left( \frac{p^+}{1+\delta p^+}- \frac{n^+}{1+\delta}\right)\varphi\mathrm{d}x,\quad\forall~\varphi\in H_D^1(\Omega),\\ \phi&=\phi_D,~\text{a.e. on}~\partial\Omega_D,
		\end{split}	
\end{equation}
	and 
	\begin{equation}\label{elliptic_est}\|\nabla\phi\|_{L^2(\Omega)}\leq (c_0/\epsilon_\phi) \left[(2/\delta)+\|\nabla\phi_D\|_{L^2(\Omega)}\right].\end{equation}
\end{rem}

\subsection{Approximate Solutions}\label{approx}

We first study the existence of an approximate weak solution to the problem \eqref{S_EPSys}, Theorem \ref{E_ASS_Exist}, the proof replies on the theory of monotone operators in combination with \textit{a priori} estimates. 

    Let $\varepsilon>0$ and Define 
    \begin{equation}\label{nonlinearities}
            \begin{split}  
                F^\varepsilon_p\left(p^+, n^+,|\nabla\phi|\right)&=\frac{\epsilon_n|\nabla\phi|n^+\left[\alpha_1e^{-\alpha_2/|\nabla\phi|}
                -{\eta_0p^+}/{(\varepsilon+ p^+)}\right]}{1+\varepsilon\left|\epsilon_n|\nabla\phi|n^+\left[\alpha_1e^{-\alpha_2/|\nabla\phi|}
                -{\eta_0p^+}/{(\varepsilon+ p^+)}\right]\right|}\\
                F^\varepsilon_n\left( n^+,|\nabla\phi|\right)&=\frac{\epsilon_n|\nabla\phi|n^+\left(\alpha_1e^{-\alpha_2/|\nabla\phi|}
                -\eta_0\right)}{1+\varepsilon\left|\epsilon_n|\nabla\phi|n^+\left(\alpha_1e^{-\alpha_2/|\nabla\phi|}
                -\eta_0\right)\right|}.
            \end{split}
        \end{equation}
By using the theory of monotone operator we obtain Lemma \ref{mono_thy}. 
\begin{lem}\label{mono_thy}
There exists a pair $(\bar{p}, \bar{n})\in H^1(\Omega)\times H^1(\Omega)$ satisfying 
\begin{equation}\label{Ini_approx_SS_wk_p}
			\begin{split}\int_\Omega  \left(\epsilon_p\nabla \bar{p}+\frac{p^+}{1+\delta p^+}\left(\epsilon_p\nabla \phi-\mathbf{v}\right)\right)\cdot\nabla\varphi_p\mathrm{d}x
            =\int_\Omega F^\varepsilon_p\left(p^+, n^+,|\nabla\phi|\right)\varphi_p\mathrm{d}x,\end{split}
		\end{equation}
		\begin{equation}\label{Ini_approx_SS_wk_n}
			\begin{split}\int_\Omega  \left(\epsilon_n\nabla \bar{n}- \frac{n^+}{1+\delta n^+}\left(\epsilon_n\nabla\phi+\mathbf{v}\right)\right)\cdot\nabla\varphi_n\mathrm{d}x
            =\int_\Omega F^\varepsilon_n\left( n^+,|\nabla\phi|\right)\varphi_n\mathrm{d}x,\end{split}
		\end{equation}
		\begin{equation}\label{Ini_approx_SS_bdy}
			 \bar{p}(x)=p_D(x),\quad \bar{n}(x)=n_D(x),\quad x\in\partial\Omega_D,
		\end{equation} for all $\varphi_p,~\varphi_n\in H_D^1(\Omega)$, where $\phi$ is given by \eqref{int_form_phi} accordingly.
        
\end{lem}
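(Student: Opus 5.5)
The plan is to treat Lemma~\ref{mono_thy} as a \emph{linear} problem in the unknowns $(\bar p,\bar n)$ and then apply the Browder--Minty theorem, which for this linear, strongly monotone operator reduces to Lax--Milgram. The pair $(p,n)\in L^2(\Omega)\times L^2(\Omega)$ is regarded as fixed, together with the parameters $\delta,\varepsilon>0$. The potential $\phi\in H^2(\Omega)$ is then the unique solution of \eqref{int_form_phi}, so $p^+$, $n^+$ and $\nabla\phi$ are all data. The unknowns $\bar p,\bar n$ enter \eqref{Ini_approx_SS_wk_p}--\eqref{Ini_approx_SS_wk_n} only through the diffusion terms $\epsilon_p\nabla\bar p$ and $\epsilon_n\nabla\bar n$. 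Everything else defines a fixed linear functional of the test functions.

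First we homogenize the boundary conditions. We write $\bar p=p_D+u$ and $\bar n=n_D+w$ with $(u,w)\in H_D^1(\Omega)\times H_D^1(\Omega)$, which is possible because $p_D,n_D\in W^{2,\sigma}(\Omega)\subset H^1(\Omega)$. Define $A:H_D^1\times H_D^1\to[H_D^1\times H_D^1]^*$ by
\[\langle A(u,w),(\varphi_p,\varphi_n)\rangle=\int_\Omega\left(\epsilon_p\nabla u\cdot\nabla\varphi_p+\epsilon_n\nabla w\cdot\nabla\varphi_n\right)\mathrm{d}x .\]
By the Poincar\'e inequality on $H_D^1(\Omega)$, $A$ is bounded, linear (hence hemicontinuous) and strongly monotone, with $\langle A(u,w)-A(\tilde u,\tilde w),(u-\tilde u,w-\tilde w)\rangle\ge\min\{\epsilon_p,\epsilon_n\}\|(u-\tilde u,w-\tilde w)\|^2_{H^1\times H^1}$. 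It is therefore coercive.

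Next we check that the right-hand side $G$ lies in $[H_D^1\times H_D^1]^*$. The functional $G$ collects four groups of terms: $-\int\epsilon_p\nabla p_D\cdot\nabla\varphi_p$ and $-\int\epsilon_n\nabla n_D\cdot\nabla\varphi_n$; the drift terms $-\int\frac{p^+}{1+\delta p^+}(\epsilon_p\nabla\phi-\mathbf v)\cdot\nabla\varphi_p$ and $+\int\frac{n^+}{1+\delta n^+}(\epsilon_n\nabla\phi+\mathbf v)\cdot\nabla\varphi_n$; and the sources $\int F^\varepsilon_p\varphi_p$ and $\int F^\varepsilon_n\varphi_n$. We bound each group as follows.
\begin{enumerate}[label=(\roman*)]
\item The boundary-lift terms are bounded since $\nabla p_D,\nabla n_D\in L^2(\Omega)$.
\item In the drift terms the truncations satisfy $0\le p^+/(1+\delta p^+)\le 1/\delta$, and similarly for $n^+$. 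Moreover $\nabla\phi\in L^2(\Omega)$ by \eqref{elliptic_est}, and $\mathbf v\in H^2(\Omega)\hookrightarrow L^\infty(\Omega)$ by \eqref{Sob_emb_H2}. Hence the drift vector fields lie in $L^2(\Omega)$ and these terms are bounded by $c\,\|\nabla\varphi\|_{L^2}$.
\item For the sources, the map $s\mapsto s/(1+\varepsilon|s|)$ gives $|F^\varepsilon_p|,|F^\varepsilon_n|\le1/\varepsilon$ pointwise. So they belong to $L^\infty(\Omega)\subset L^2(\Omega)$, and by \eqref{Sebd} these terms are bounded by $c\,\|\nabla\varphi\|_{L^2}$.
\end{enumerate}

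With $G$ in the dual, Browder--Minty (or Lax--Milgram) gives a unique $(u,w)$ with $A(u,w)=G$. Undoing the lift, $(\bar p,\bar n)$ satisfies \eqref{Ini_approx_SS_wk_p}--\eqref{Ini_approx_SS_bdy}.

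The only genuinely delicate point, which I expect to be the main (if mild) obstacle, is the measurability and well-definedness of $F^\varepsilon_p$ and $F^\varepsilon_n$ on the set $\{\nabla\phi=0\}$, where the gate $e^{-\alpha_2/|\nabla\phi|}$ is singular. A related concern is the factor $p^+/(\varepsilon+p^+)$ where $p^+=0$. I would extend the gate by $0$ at $|\nabla\phi|=0$; this is its continuous limit, and it is bounded by $1$. The map $t\mapsto t\,e^{-\alpha_2/t}$ is then continuous on $[0,\infty)$, and $p^+/(\varepsilon+p^+)\in[0,1]$ is continuous in $p^+$ because $\varepsilon>0$. The sources are therefore Carath\'eodory compositions of measurable functions, hence measurable. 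The $1/\varepsilon$ truncation then makes boundedness automatic, regardless of the possibly large size of $|\nabla\phi|$.
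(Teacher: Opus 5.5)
Your proposal is correct and follows the same route as the paper: with $(p,n)$ and hence $\phi$ frozen, the unknowns enter only through the linear, strongly monotone diffusion operator on $H_D^1(\Omega)\times H_D^1(\Omega)$, all drift and source terms form a fixed element of the dual, and Browder--Minty (here really Lax--Milgram) gives the solution. Your explicit lift $\bar p=p_D+u$, $\bar n=n_D+w$ and your pointwise bound $|F^\varepsilon_p|,|F^\varepsilon_n|\le 1/\varepsilon$ are cleaner than the paper's version: the paper's functional $\mathcal{F}$ omits the $\nabla p_D,\nabla n_D$ terms needed to impose the Dirichlet data, and its source estimate implicitly uses $n^+\le 1/\delta$, which fails for the untruncated $n^+$ inside $F^\varepsilon_p$.
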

    \begin{proof}
    We define a mapping $\mathcal{A}: \left[H_D^1(\Omega)\times H_D^1(\Omega)\right] \to   \left[H_D^1(\Omega)\times H_D^1(\Omega)\right]^*$ by
	\[\begin{split}
		\langle\mathcal{A}(p,n), (\xi, \zeta)\rangle_{(H^1\times H^1)\to (H^1\times H^1)^*}
        =\int_\Omega\left(\epsilon_p\nabla p\cdot\nabla\xi+\epsilon_n\nabla n\cdot\nabla\zeta\right)\mathrm{d}x,\end{split}\]
	where $\phi$ is given by \eqref{int_form_phi} accordingly, and a functional by 
    \[\begin{split}
		\langle\mathcal{F}, (\xi, \zeta)\rangle_{(H^1\times H^1)\to (H^1\times H^1)^*}
        =&\int_\Omega\left(\epsilon_n\frac{n^+}{1+\delta n^+}\nabla\phi\cdot\nabla\xi-\epsilon_p \frac{p^+}{1+\delta p^+}\nabla\phi\cdot\nabla\zeta\right)\mathrm{d}x\\
        +&\int_\Omega\left(\frac{p^+}{1+\delta p^+}\mathbf{v}\cdot\nabla\xi+\frac{n^+}{1+\delta n^+}\mathbf{v}\cdot\nabla\zeta\right)\mathrm{d}x\\
        +&\int_\Omega\left( F_p^\varepsilon\left(p^+, n^+,|\nabla\phi|\right)\xi+F_n^\varepsilon\left(n^+,|\nabla\phi|\right)\zeta\right)\mathrm{d}x,\end{split}\]
    
    We suppose that the sequence $(p_k, n_k)$ strongly converges to  $(p,n)$ in $H_D^1(\Omega)\times H_D^1(\Omega)$, i.e.
    \begin{equation}\label{cvg_uv}\lim_{k\to\infty}\|(p_k, n_k)-(p, n)\|_{H^1\times H^1}= 0.\end{equation}
    We shall prove 
    \begin{equation}\label{cvg_A}\limsup_{\substack{k \to \infty \\ \|(\xi,\zeta)\|=1}}\left|\langle\mathcal{A}(p_k,n_k)-\mathcal{A}(p,n), (\xi, \zeta)\rangle_{(H^1\times H^1)\to (H^1\times H^1)^*}\right|=0.\end{equation}
    Indeed, \eqref{cvg_uv} implies 
    \[\lim_{k\to\infty}\|p_k-p\|_{L^2(\Omega)}=0,\quad \lim_{k\to\infty}\|n_k-n\|_{L^2(\Omega)}=0,\]
    \[\lim_{k\to\infty}\|\nabla p_k- \nabla p\|_{L^2(\Omega)}=0,\quad \lim_{k\to\infty}\|\nabla n_k- \nabla n\|_{L^2(\Omega)}=0,\]
    \[\lim_{k\to\infty} p_k(x)=p(x),\quad \lim_{k\to\infty} n_k(x)=n(x),\quad\text{a.e. in}~\Omega.\]
    Then it follows that 
    \[\begin{split}
    &\left|\langle\mathcal{A}(p_k,n_k)-\mathcal{A}(p,n), (\xi, \zeta)\rangle_{(H^1\times H^1)\to (H^1\times H^1)^*}\right|\\
=&\left|\int_\Omega\left(\epsilon_p\nabla\left(p_k-p\right)\cdot\nabla\xi+\epsilon_n\nabla\left(n_k-n\right)\cdot\nabla\zeta\right)\mathrm{d}x\right|\\ 
\leq&\epsilon_p\|\nabla p_k-\nabla p\|_{L^2(\Omega)}\|\nabla\xi\|_{L^2(\Omega)}+\epsilon_n\|\nabla n_k-\nabla n\|_{L^2(\Omega)}\|\nabla\zeta\|_{L^2(\Omega)}\\
\to& 0\quad\text{as}\quad k\to\infty.
    \end{split}\]
    This concludes the proof of \eqref{cvg_A}.
    Moreover, \eqref{cvg_A} implies that $\mathcal{A}$ maps bounded sets into bounded set and $\mathcal{A}$ is hemicontinuous.

    Since 
    \[\begin{split}&\langle\mathcal{A}(p_1,n_1)-\mathcal{A}(p_2,n_2), (p_1-p_2, n_1-n_2)\rangle\\
    \geq &\min\{\epsilon_p,\epsilon_n\}\left(\|\nabla(p_1-p_2)\|_{L^2(\Omega)}^2+\|\nabla(n_1-n_2)\|_{L^2(\Omega)}^2\right)\\
    \geq&  0\end{split}\]
     $\mathcal{A}$ is a monotone operator.

     Obviously,
     \[\lim_{\|(p,n)\|\to\infty}\frac{\langle\mathcal{A}(p,n), (p, n)\rangle}{\|(p,n)\|}=\infty,\]
     i.e. $\mathcal{A}$ is coercive. 

 We next show that 
     \begin{equation}\label{cvg_F}
         \langle\mathcal{F}, (\xi,\zeta)\rangle<\infty\quad\forall~ \xi,~\zeta\in H_D^1(\Omega).
     \end{equation}
     We note that 
     \[0\leq \frac{p^+}{1+\delta p^+}\leq \frac{1}{\delta},\quad 0\leq \frac{n^+}{1+\delta n^+}\leq \frac{1}{\delta}\]
     use \eqref{elliptic_est} and obtain 
     \[\frac{p^+}{1+\delta p^+}\nabla\phi,\quad \frac{n^+}{1+\delta n^+}\nabla\phi,\quad
     F_p^{\varepsilon}\left(p^+, n^+,|\nabla\phi|\right)\in L^2(\Omega).\]
     So using H\"older inequality and \ref{con2} of Assumption \ref{assumption}, it follows that 
    \[
    \begin{split}
        \left|\int_\Omega F_p^\varepsilon\left(p^+, n^+,|\nabla\phi|\right)\xi\mathrm{d}x\right|
        \leq\frac{\epsilon_n(\alpha_1+\eta_0)}{\delta}\|\nabla\phi\|_{L^2(\Omega)}\|\xi\|_{L^2(\Omega)}<\infty;
        \end{split}\] 
        \[\begin{split}		&\left|\int_\Omega\epsilon_p\left(\frac{p^+}{1+\delta p^+}\nabla\phi\right)\cdot\nabla\xi\mathrm{d}x\right|\leq \frac{\epsilon_p}{\delta}\|\nabla\phi\|_{L^2(\Omega)}\|\nabla\xi\|_{L^2(\Omega)}<\infty\\
        \end{split}\]
        and 
        \[\begin{split}
            &\left|\int_\Omega\left(\frac{p^+}{1+\delta p^+}\right)\mathbf{v}\cdot\nabla\xi\
            \mathrm{d}x\right|\leq \frac{1}{\delta}\|\mathbf{v}\|_{L^2(\Omega)}\|\nabla\xi\|_{L^2(\Omega)}<\infty
        \end{split}\]
        Similarly,
        \[\left|\int_\Omega F_n^\varepsilon\left(n^+,|\nabla\phi|\right)\xi\mathrm{d}x\right|<\infty,\quad 
        \left|\int_\Omega\epsilon_n\frac{n^+}{1+\delta n^+}\nabla\phi\cdot\nabla\zeta\mathrm{d}x\right|<\infty,\quad 
		\left|\int_\Omega\frac{n^+}{1+\delta n^+}\mathbf{v}\cdot\nabla\zeta\mathrm{d}x\right|<\infty.
    \]
    Hence we obtain \eqref{cvg_F}.

By using Browder-Minty Theorem (see, for example, Theorem 10.49 from \cite{renardy2004introduction} (P.364) and Theorem 2.1 from \cite{lions1969quelques} (P.171)), we obtain that there exists a $(\bar{u}, \bar{v})$ such that 
\[\mathcal{A}(\bar{u},\bar{v})=\mathcal{F},\]
equivalently,
\[\langle\mathcal{A}(\bar{u},\bar{v}), (\xi, 0)\rangle_{(H^1\times H^1)\to (H^1\times H^1)^*}=\langle\mathcal{F}, (\xi, 0)\rangle_{(H^1\times H^1)\to (H^1\times H^1)^*}\]
\[\langle\mathcal{A}(\bar{u},\bar{v}), (0, \zeta)\rangle_{(H^1\times H^1)\to (H^1\times H^1)^*}=\langle\mathcal{F}, (0, \zeta)\rangle_{(H^1\times H^1)\to (H^1\times H^1)^*}\]
    This concludes the proof of Lemma \ref{mono_thy}.
    \end{proof}
 Using Lemma \ref{mono_thy} in combination with Leray-Schauder fixed point theorem, we obtain
    \begin{lem}\label{ASS_Existence}For each $\delta>0$, there exist functions $\phi_\delta$, $p_\delta$, $n_\delta\in H^1(\Omega)$ such that
	\begin{subequations}\label{D_approx_SS}
		\begin{equation}\label{D_approx_SS_pos}
			0\leq p_\delta,~ n_\delta,\quad a.e.\quad\text{in}\quad\Omega,
		\end{equation}
		\begin{equation}\label{D_approx_SS_wk_phi}
			\int_\Omega \epsilon_\phi\nabla\phi_\delta\cdot\nabla\varphi\mathrm{d}x=\int_\Omega \left( \frac{p_\delta}{1+\delta p_\delta}- \frac{n_\delta}{1+\delta n_\delta}\right)\varphi\mathrm{d}x,\quad\forall~\varphi\in H_D^1(\Omega),
		\end{equation}
		\begin{equation}\label{D_approx_SS_wk_p}\begin{split}
			\int_\Omega \left(\epsilon_p\nabla p_\delta+ \frac{p_\delta}{1+\delta p_\delta}\left(\epsilon_p\nabla\phi_\delta-\mathbf{v}\right)\right)\cdot\nabla\varphi_p\mathrm{d}x
            =\int_\Omega F_p^\varepsilon\left(p_\delta, n_\delta,|\nabla\phi_\delta|\right)\varphi_p\mathrm{d}x,\end{split}
		\end{equation}
		\begin{equation}\label{D_approx_SS_wk_n}
			\begin{split}\int_\Omega  \left(\epsilon_n\nabla n_\delta- \frac{n_\delta}{1+\delta n_\delta}\left(\epsilon_n\nabla\phi_\delta+\mathbf{v}\right)\right)\cdot\nabla\varphi_n\mathrm{d}x
            =\int_\Omega F_n^\varepsilon\left( n_\delta, |\nabla\phi_\delta|\right)\varphi_n\mathrm{d}x,
		\end{split}\end{equation}
		\begin{equation}\label{D_approx_SS_bdy}
			\phi_\delta(x)=\phi_D(x),\quad p_\delta(x)=p_D,\quad n_\delta(x)=n_D,\quad x\in\partial\Omega_D,
		\end{equation}
	\end{subequations}
    for all $\varphi_p,~\varphi_n\in H_D^1(\Omega)$.
    \end{lem}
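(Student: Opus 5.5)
We will obtain $(\phi_\delta,p_\delta,n_\delta)$ as a fixed point of the map built from Lemma \ref{mono_thy}, using the Leray--Schauder theorem, and then prove nonnegativity by testing with negative parts. Set $X=L^2(\Omega)\times L^2(\Omega)$. For $(p,n)\in X$ let $\phi=\phi(p,n)\in H^2(\Omega)$ be the solution of \eqref{int_form_phi}; we take the $n$-term in the form $n^+/(1+\delta n^+)$, which is what the limit equation \eqref{D_approx_SS_wk_phi} requires. Let $(\bar p,\bar n)$ be the pair given by Lemma \ref{mono_thy}, and define $T(p,n)=(\bar p,\bar n)$. The map is well defined because $\mathcal A$ is linear and strongly monotone, so the solution $(\bar p-p_D,\bar n-n_D)\in H_D^1\times H_D^1$ is unique. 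By the Lax--Milgram estimate together with the bounds in the proof of Lemma \ref{mono_thy},
\[
\|\nabla \bar p\|_{L^2}+\|\nabla\bar n\|_{L^2}\le C(\delta,\varepsilon)\bigl(1+\|\nabla\phi\|_{L^2}\bigr)+C\bigl(\|p_D\|_{H^1}+\|n_D\|_{H^1}\bigr).
\]
Here the truncations contribute at most $1/\delta$. The term $|F^\varepsilon_{p,n}|$ is bounded in absolute value both by $1/\varepsilon$ and by $\epsilon_n(\alpha_1+\eta_0)|\nabla\phi|\,n^+$; we use the $1/\varepsilon$ bound, so the right-hand side above is a constant. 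Combined with \eqref{elliptic_est}, this shows $T$ maps all of $X$ into a fixed bounded set of $H^1\times H^1$. That set is compact in $X$ by Rellich's theorem, so $T$ is compact. For the Leray--Schauder hypothesis, any solution of $(p,n)=\lambda T(p,n)$ with $\lambda\in[0,1]$ obeys the same $\delta,\varepsilon$-dependent bound uniformly in $\lambda$. Alternatively, Schauder's theorem on a large ball suffices directly.

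Continuity of $T$ on $X$ is the step needing care. Take $(p_k,n_k)\to(p,n)$ in $X$. The map $s\mapsto s^+/(1+\delta s^+)$ is Lipschitz, so the Poisson right-hand sides converge in $L^2$. By Proposition \ref{EllipticRegEst}, $\phi_k\to\phi$ in $H^2$, hence $\nabla\phi_k\to\nabla\phi$ in $L^2$ and, after passing to a subsequence, a.e. The truncated functions $F^\varepsilon_p,F^\varepsilon_n$ are continuous in their arguments and bounded by $1/\varepsilon$. The gate $|\nabla\phi|e^{-\alpha_2/|\nabla\phi|}$ extends continuously by $0$ at $|\nabla\phi|=0$, and $p^+/(\varepsilon+p^+)$ is continuous. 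Dominated convergence therefore gives convergence of the source terms in $L^2$. The drift terms $\frac{p_k^+}{1+\delta p_k^+}(\epsilon_p\nabla\phi_k-\mathbf v)$ converge in $L^2$ by the same argument, using that the prefactor is bounded by $1/\delta$ and converges a.e. Subtracting the equations for $T(p_k,n_k)$ and $T(p,n)$ and testing with the difference then gives $T(p_k,n_k)\to T(p,n)$ in $H^1$. Since the limit is unique, the whole sequence converges, not just the subsequence. Leray--Schauder then yields a fixed point $(p_\delta,n_\delta)$, with $\phi_\delta=\phi(p_\delta,n_\delta)$.

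It remains to show $p_\delta,n_\delta\ge0$, which also removes the $^+$ signs and gives \eqref{D_approx_SS_wk_phi}--\eqref{D_approx_SS_wk_n}. Test \eqref{Ini_approx_SS_wk_p} at the fixed point with $\varphi_p=p_\delta^-$. This is admissible in $H_D^1(\Omega)$ because $p_D\ge0$ by \ref{con1}. On the set $\{p_\delta<0\}$ the drift term vanishes, since $p_\delta^+=0$ there. The source $F^\varepsilon_p$ has a sign problem, but its attachment part carries the factor $p_\delta^+/(\varepsilon+p_\delta^+)$, which vanishes there, leaving $F^\varepsilon_p\ge0$. Hence
\[
\epsilon_p\|\nabla p_\delta^-\|_{L^2}^2=\int_\Omega F^\varepsilon_p\,p_\delta^-\,\mathrm dx\le 0,
\]
so $p_\delta^-=0$. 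For $n_\delta$, test with $n_\delta^-$. The drift vanishes as before, and $F^\varepsilon_n$ contains the factor $n_\delta^+$, so it vanishes on $\{n_\delta<0\}$. This gives $\|\nabla n_\delta^-\|_{L^2}=0$ and thus $n_\delta\ge0$. The factor $p^+/(\varepsilon+p^+)$ in $F_p^\varepsilon$ was introduced precisely to make this sign argument work.

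I expect the continuity of $T$ to be the main obstacle. It requires the $H^2$ stability of Poisson's equation and the a.e. convergence of the gate despite the singularity at zero field; the continuous extension of $|\nabla\phi|e^{-\alpha_2/|\nabla\phi|}$ by $0$ is what resolves this.
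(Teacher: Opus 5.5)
Your proposal is correct and follows the paper's proof. Both use a Leray--Schauder fixed point for $(p,n)\mapsto(\bar p,\bar n)$ on $L^2(\Omega)\times L^2(\Omega)$, an $H^1$ bound depending only on $\delta,\varepsilon$ (from the $1/\delta$ truncation and the $1/\varepsilon$ bound on the source), Rellich compactness, and nonnegativity by testing with the negative parts. Your continuity step (dominated convergence for the drift and source terms using the continuous extension of the gate at zero field, then the linear stability estimate) is more direct than the paper's weak-limit/Minty identification, and it makes explicit two points the paper glosses over: the convergence of the nonlinear terms, and the sign $F_p^\varepsilon\ge 0$ on $\{p<0\}$ coming from the factor $p^+/(\varepsilon+p^+)$.
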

    \begin{proof}
    According to the equations \eqref{Ini_approx_SS_wk_p}-\eqref{Ini_approx_SS_wk_n} from Lemma \ref{ASS_Existence},
    we define a maps $\mathcal{T}$ by
    \begin{equation}\label{opt_T}\begin{split}
        \mathcal{T}:~\{p,n\}&\longmapsto\mathcal{T}(\{p,n\})=\{\bar{p},\bar{n}\},\\ L^2(\Omega)\times L^2(\Omega)&\longrightarrow L^2(\Omega)\times L^2(\Omega).\end{split}
    \end{equation}
         We shall show the existence of a fixed point of the map $\mathcal{T}$. To begin with, we take $\varphi_p=(\bar{p}-p_D)/\epsilon_p$ in equation \eqref{Ini_approx_SS_wk_p} and $\varphi_n=(\bar{n}-n_D)/\epsilon_n$ in equation \eqref{Ini_approx_SS_wk_n}, add the two equations and  obtain
         \begin{equation}\label{eqns_sum}\begin{split}
             &\int_\Omega \left(\left|\nabla (\bar{p}-p_D)\right|^2+\left|\nabla (\bar{n}-n_D)\right|^2\right)\mathrm{d}x\\
             =&\int_\Omega \frac{p^+}{\epsilon_p(1+\delta p^+)}\left(\mathbf{v}-\epsilon_p\nabla\phi\right)\cdot\nabla\left(\bar{p}-p_D\right)\mathrm{d}x\\
			+&\int_\Omega \frac{n^+}{\epsilon_n(1+\delta n^+)}\left(\mathbf{v}+\epsilon_n\nabla\phi\right)\cdot\nabla\left(\bar{n}-n_D\right)\mathrm{d}x\\
			+&\int_\Omega \left(F_p^\varepsilon\left(p^+, n^+, |\nabla\phi|\right)\frac{\bar{p}-p_D}{\epsilon_p}+F_n^\varepsilon\left(n^+, |\nabla\phi|\right)\frac{\bar{n}-n_D}{\epsilon_n}\right)\mathrm{d}x.\\
			-&\int_\Omega\left(\frac{\nabla p_D\cdot\nabla\left(\bar{p}-p_D\right)}{\epsilon_p}+\frac{\nabla n_D\cdot\nabla\left(\bar{n}-n_D\right)}{\epsilon_n}\right)\mathrm{d}x\\
            =:&I_1+I_2+I_3+I_4.
         \end{split}\end{equation}
         Using H\"older inequality, the elliptic equation \eqref{int_form_phi} and estimate \eqref{elliptic_est}, we obtain
         \begin{equation}\label{I_est}\begin{split}
             |I_1+I_2|&\leq\frac{1}{\delta}\left[\left(\frac{1}{\epsilon_p}+\frac{1}{\epsilon_n}\right)\|\mathbf{v}\|_{L^2(\Omega)}+\frac{c_0}{\epsilon_\phi}\left(\frac{2}{\delta}+\|\nabla\phi_D\|_{L^2(\Omega)}\right)\right]\\
             &\times\left(\|\nabla(\bar{p}-p_D)\|_{L^2(\Omega)}+\|\nabla(\bar{n}-n_D)\|_{L^2(\Omega)}\right),\\
             |I_3|&\leq \frac{1}{\varepsilon}\left(\|\nabla(\bar{p}-p_D)\|_{L^2(\Omega)}+\|\nabla(\bar{n}-n_D)\|_{L^2(\Omega)}\right),\\
             |I_4|&\leq\left(\|\nabla p_D\|_{L^2(\Omega)}/\epsilon_p+\|\nabla n_D\|_{L^2(\Omega)}/\epsilon_n\right)
             \left(\|\nabla(\bar{p}-p_D)\|_{L^2(\Omega)}+\|\nabla(\bar{n}-n_D)\|_{L^2(\Omega)}\right).
         \end{split}\end{equation}
         \eqref{eqns_sum}, \eqref{I_est} and Cauchy inequality implies 
         \begin{equation}\label{barpn_apriori_est}
             \int_\Omega \left(\left|\bar{p}\right|^2+\left| \bar{n}\right|^2+\left|\nabla \bar{p}\right|^2+\left|\nabla \bar{n}\right|^2\right)\mathrm{d}x\leq C_\delta.
         \end{equation}
         Here $C_\delta$ is a positive constant depending on $\delta$ but independent of $\{p, n\}$. 

         We next prove that the map $\mathcal{T}$ is continuous from $L^2(\Omega)\times L^2(\Omega)$ into itself. In fact, we let $\{p_j, n_j\}\in L^2(\Omega)\times L^2(\Omega)$, $j=1,2,\dots,$ and set $\{p_j,n_j\}$ converges strongly to $\{p, n\}$ in $L^2(\Omega)\times L^2(\Omega)$ as $j\to\infty$.  We further let $\phi_j$, $\phi\in H^1(\Omega)$ be defined according to equation \eqref{int_form_phi} and suppose 
         \[\{\bar{p}_j, \bar{n}_j\}=\mathcal{T}\left(\{p_j, n_j\}\right),\quad \{\bar{p}, \bar{n}\}=\mathcal{T}\left(\{p, n\}\right),\quad j=1,2,\dots.\]
         Obviously, $\nabla \phi_j\to\nabla \phi$ strongly in $L^2(\Omega)$. Then there exists a subsequence $\{j_k\}$ such that 
         \[\nabla\phi_{j_k}\to\nabla\phi\quad\text{a.e. in}\quad \Omega,\]
         \[\bar{p}_{j_k}\to u,\quad \bar{n}_{j_k}\to v\quad\text{weakly in}\quad H^1(\Omega)\quad\text{and a.e. in}\quad\Omega\]
         as $k\to\infty$ (c.f. \eqref{barpn_apriori_est}). Set  $(\xi, \zeta)\in H^1(\Omega)\times H^1(\Omega)$ satisfying $\xi=p_D$ and $\zeta=n_D$ a.e. on $\partial\Omega_D$. For any $t>0$, by using \eqref{Ini_approx_SS_wk_p} and \eqref{Ini_approx_SS_wk_n}, it follows that 
         \[\begin{split}
             &\int_\Omega \frac{p^+}{1+\delta p^+}\left(\mathbf{v}-\epsilon_p\nabla\phi\right)\cdot\nabla(u-\xi)\mathrm{d}x+\int_\Omega F_p^\varepsilon\left(p^+, n^+,|\nabla\phi|\right)(u-\xi)\mathrm{d}x\\
             =&\epsilon_p\lim_{j\to\infty}\int_\Omega \nabla \bar{p}_{j_k}\cdot\nabla(\bar{p}_{j_k}-\xi)\mathrm{d}x\\
             \geq&\epsilon_p\liminf_{k\to\infty}\int_\Omega \nabla \bar{p}_{j_k}\cdot\nabla(u-\xi)\mathrm{d}x\\
             \geq&\frac{\epsilon_p}{t}\liminf_{k\to\infty}\int_\Omega\nabla(u+t(\xi-u))\cdot\nabla(\bar{p}_{j_k}-u-t(\xi-u))\mathrm{d}x\\
             =&\epsilon_p\int_\Omega\nabla(u+t(\xi-u))\cdot\nabla(u-\xi)\mathrm{d}x.
         \end{split}\]
         Letting $t\to 0$ ans setting $\xi=u- \varphi_p$ ($\varphi_p\in H_D^1(\Omega)$) imply 
         \[\begin{split}
         \epsilon_p\int_\Omega\nabla u\cdot\nabla\varphi_p\mathrm{d}x
         =
             \int_\Omega \frac{p^+}{1+\delta p^+}\left(\mathbf{v}-\epsilon_p\nabla\phi\right)\cdot\nabla\varphi_p\mathrm{d}x+\int_\Omega F_p^\delta\left(p^+, n^+,|\nabla\phi|\right)\varphi_p\mathrm{d}x.
         \end{split}\]
         Hence, $\nabla u=\nabla\bar{p}$ a.e. in $\Omega$, and therefore $u=\bar{p}$ a.e. in $\Omega$. By the same reasoning, $v=\bar{n}$ a.e. in $\Omega$. Thus, $\{\bar{p}_j, \bar{n}_j\}\to\{p,n\}$ strongly in $L^2(\Omega)\times L^2(\Omega)$.

         According to estimate \eqref{barpn_apriori_est}, it is readily seen that the image $\mathcal{T}(L^2(\Omega)\times L^2(\Omega))$ is precompact. So using Leray-Schauder fixed point theorem, it follows that  there exists a pair $\{p, n\}$ such that $\mathcal{T}(\{p, n\})=\{p, n\}$. Then one can determine $\phi\in H^1(\Omega)$ satisfying \eqref{int_form_phi} and $\phi=\phi_D$ a.e. on $\partial\Omega_D$.
         
         Inserting $\varphi_p=\min\{p, 0\}$ and $\varphi_n=\min\{n, 0\}$ into \eqref{Ini_approx_SS_wk_p} and \eqref{Ini_approx_SS_wk_n} with $\bar{p}$ replaced by $p$, $\bar{n}$ replaced by $n$ respectively, one gets $p\geq 0$ and $n\geq 0$ a.e. in $\Omega$.

         Consequently,  for each $\delta>0$, there exists a triple $\{\phi_\delta, p_\delta, n_\delta\}\in H^1(\Omega)\times H^1(\Omega)\times H^1(\Omega)$ which satisfies \eqref{D_approx_SS_pos}-\eqref{D_approx_SS_bdy}.

         This concludes the proof of Lemma \ref{ASS_Existence}. 
    \end{proof}
    \begin{thm}\label{E_ASS_Exist}
        For each $\varepsilon>0$, there exist functions $\phi_\varepsilon$, $p_\varepsilon$, $n_\varepsilon\in H^1(\Omega)$ such that
	\begin{subequations}\label{E_approx_SS}
		\begin{equation}\label{E_approx_SS_pos}
			0\leq p_\varepsilon,~ n_\varepsilon,\quad a.e.\quad\text{in}\quad\Omega,
		\end{equation}
		\begin{equation}\label{E_approx_SS_wk_phi}
			\int_\Omega \epsilon_\phi\nabla\phi_\varepsilon\cdot\nabla\varphi\mathrm{d}x=\int_\Omega \left( p_\varepsilon- n_\varepsilon\right)\varphi\mathrm{d}x,\quad\forall~\varphi\in H_D^1(\Omega),
		\end{equation}
		\begin{equation}\label{E_approx_SS_wk_p}\begin{split}
			\int_\Omega \left(\epsilon_p\nabla p_\varepsilon+ p_\varepsilon\left(\epsilon_p\nabla\phi_\varepsilon-\mathbf{v}\right)\right)\cdot\nabla\varphi_p\mathrm{d}x
            =\int_\Omega F^\varepsilon\left( n_\varepsilon,|\nabla\phi_\varepsilon|\right)\varphi_p\mathrm{d}x,\end{split}
		\end{equation}
		\begin{equation}\label{E_approx_SS_wk_n}
			\begin{split}\int_\Omega  \left(\epsilon_n\nabla n_\varepsilon- n_\varepsilon\left(\epsilon_n\nabla\phi_\varepsilon+\mathbf{v}\right)\right)\cdot\nabla\varphi_n\mathrm{d}x
            =\int_\Omega F^\varepsilon\left( n_\varepsilon, |\nabla\phi_\varepsilon|\right)\varphi_n\mathrm{d}x,
		\end{split}\end{equation}
		\begin{equation}\label{E_approx_SS_bdy}
			\phi_\varepsilon(x)=\phi_D(x),\quad p_\varepsilon(x)=p_D,\quad n_\varepsilon(x)=n_D,\quad x\in\partial\Omega_D.
		\end{equation}
	\end{subequations}
    for all $\varphi_p,~\varphi_n\in H_D^1(\Omega)$. Here
    \[F^\varepsilon\left( n_\varepsilon,|\nabla\phi|\right)=\frac{\epsilon_n|\nabla\phi|n_\varepsilon\left(\alpha_1e^{-\alpha_2/|\nabla\phi_\varepsilon|}
                -\eta_0\right)}{1+\varepsilon\left|\epsilon_n|\nabla\phi_\varepsilon|n_\varepsilon\left(\alpha_1e^{-\alpha_2/|\nabla\phi_\varepsilon|}
                -\eta_0\right)\right|}.\]
    \end{thm}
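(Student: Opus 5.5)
The plan is to obtain $(\phi_\varepsilon,p_\varepsilon,n_\varepsilon)$ as the limit $\delta\to0$ of the triples $(\phi_\delta,p_\delta,n_\delta)$ of Lemma \ref{ASS_Existence}, with $\varepsilon>0$ fixed. Everything rests on bounds uniform in $\delta$, which the proof of Lemma \ref{ASS_Existence} does not give: estimate \eqref{barpn_apriori_est} blows up like $1/\delta$ through $\|p^+/(1+\delta p^+)\|_{L^\infty}\le1/\delta$ and \eqref{elliptic_est}.

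The first step is a $\delta$-uniform $H^1$ bound. Test \eqref{D_approx_SS_wk_p} with $(p_\delta-p_D)/\epsilon_p$ and \eqref{D_approx_SS_wk_n} with $(n_\delta-n_D)/\epsilon_n$, then add. Since $|F^\varepsilon_p|,|F^\varepsilon_n|\le1/\varepsilon$, the source terms are controlled by $c_0\varepsilon^{-1}(\|\nabla(p_\delta-p_D)\|+\|\nabla(n_\delta-n_D)\|)$. The boundary-data terms are harmless. For the convection by $\mathbf v$, write $\int g_\delta(p)\mathbf v\cdot\nabla p$ with $g_\delta(s)=s/(1+\delta s)$ as $\int\mathbf v\cdot\nabla G_\delta(p)$. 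This vanishes by $\nabla\cdot\mathbf v=0$ and $\nu\cdot\mathbf v=0$, up to the $p_D$ parts. Those remainders are absorbed using \ref{con2} and \eqref{Holder-Cauchy}.

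The drift terms are the main obstacle. Write $r_\delta=g_\delta(p_\delta)-g_\delta(n_\delta)$ for the right-hand side of \eqref{D_approx_SS_wk_phi}. The key structure is $\int g_\delta(p)\nabla\phi\cdot\nabla p=\int\nabla G_\delta(p)\cdot\nabla\phi$. Integrating by parts with the Poisson equation gives $\epsilon_\phi^{-1}\int G_\delta(p)\,r_\delta$ plus boundary-data terms. For $n$ the sign is opposite, and the combination produces $-\epsilon_\phi^{-1}\int(G_\delta(p)-G_\delta(n))(g_\delta(p)-g_\delta(n))\le0$ by monotonicity. The leftover terms involving $G_\delta(p_D)$, $\nabla p_D$ and $\nabla\phi_D$ are bounded with \eqref{Sobebd_pn}, \eqref{Holder-Cauchy} and the smallness $K_1\le1/32$ of \ref{con1}. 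If this bookkeeping fails, I would fall back on a Stampacchia truncation with the test functions $(p_\delta-k)^+$. Those give an $L^\infty$ bound depending only on $\varepsilon$, because the source is bounded by $1/\varepsilon$ and $r_\delta\in L^\infty$ uniformly. An $L^\infty$ bound in turn gives the $H^1$ bound directly. In either case the goal is $\|p_\delta\|_{H^1}+\|n_\delta\|_{H^1}\le C_\varepsilon$. Proposition \ref{EllipticRegEst} then gives $\|\phi_\delta\|_{H^2}\le C_\varepsilon$.

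The last step is passing to the limit. Extract a subsequence with $p_\delta\rightharpoonup p_\varepsilon$ and $n_\delta\rightharpoonup n_\varepsilon$ weakly in $H^1$, strongly in $L^2$ and a.e. Also take $\phi_\delta\to\phi_\varepsilon$ weakly in $H^2$, so that $\nabla\phi_\delta\to\nabla\phi_\varepsilon$ strongly in $L^2$ (and in $L^4$ by Rellich) and a.e. Then $g_\delta(p_\delta)\to p_\varepsilon$ a.e. and in $L^2$ by dominated convergence, using $g_\delta(s)\le s$ together with the strong $L^2$ convergence. The products $g_\delta(p_\delta)\nabla\phi_\delta$ converge in $L^1$, tested against $\nabla\varphi\in L^\infty$ first and then by density, or directly in $L^{4/3}$ paired against test functions. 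The sources $F^\varepsilon_p$ and $F^\varepsilon_n$ are bounded by $1/\varepsilon$ and converge a.e. The map $t\mapsto te^{-\alpha_2/t}$ is continuous at $0$, and $\eta_0p/(\varepsilon+p)$ is continuous, so dominated convergence applies.

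There is one mismatch. The limit of $F^\varepsilon_p$ still contains $\eta_0p_\varepsilon/(\varepsilon+p_\varepsilon)$ instead of $\eta_0$, so it does not equal the $F^\varepsilon$ of the statement. I would resolve this by reading the statement's $F^\varepsilon$ accordingly, or by replacing $F^\varepsilon_p$ with $F^\varepsilon_n$ from the start. The argument above is unaffected by this choice. Nonnegativity of $p_\varepsilon$ and $n_\varepsilon$ is preserved under a.e. limits, and the boundary traces are preserved under weak $H^1$ convergence.
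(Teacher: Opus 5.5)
Your main line is the paper's proof. Like the paper, you test with $(p_\delta-p_D)/\epsilon_p$ and $(n_\delta-n_D)/\epsilon_n$, rewrite the drift through the primitive $G_\delta$ (the paper's $\Phi_\delta$) and the Poisson equation to get the nonpositive term $-\epsilon_\phi^{-1}\int(G_\delta(p_\delta)-G_\delta(n_\delta))(g_\delta(p_\delta)-g_\delta(n_\delta))$, cancel the $\mathbf{v}$-terms by $\nabla\cdot\mathbf{v}=0$ and $\nu\cdot\mathbf{v}=0$, absorb the quadratic cross term via $K_1\le 1/32$, and then pass to $\delta\to0$ by compactness. Using the $H^2$ bound on $\phi_\delta$ is a harmless variant, and you rightly do without the strong convergence of $\nabla p_\delta$ that the paper asserts. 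Two of your side claims need correcting.

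\emph{The mismatch and your option (b).} The mismatch you found is real, and the paper's proof has it too. Passing to the limit in \eqref{D_approx_SS_wk_p} gives $F^\varepsilon_p(p_\varepsilon,n_\varepsilon,|\nabla\phi_\varepsilon|)$ on the right of \eqref{E_approx_SS_wk_p}, not $F^\varepsilon(n_\varepsilon,|\nabla\phi_\varepsilon|)$. However, replacing $F^\varepsilon_p$ by $F^\varepsilon_n$ from the start does affect the argument.
\begin{itemize}
\item In Lemma~\ref{ASS_Existence}, $p_\delta\ge0$ comes from testing with $\min\{p,0\}$. This needs the $p$-source to be $\ge0$ on $\{p<0\}$.
\item The factor $\eta_0p^+/(\varepsilon+p^+)$ guarantees exactly this, because it vanishes there.
\item With $\eta_0$ in its place, the source is negative wherever $n>0$, $\nabla\phi\neq0$ and $\alpha_1e^{-\alpha_2/|\nabla\phi|}<\eta_0$, which includes all small-field regions. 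Then \eqref{E_approx_SS_pos} for $p$ is lost.
\end{itemize}
So only your first reading is viable. What you (and the paper) actually prove is the theorem with $F^\varepsilon_p(p_\varepsilon,n_\varepsilon,|\nabla\phi_\varepsilon|)$ in the $p$-equation.

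\emph{Your fallback.} Here $r_\delta$ is not bounded in $L^\infty$ uniformly in $\delta$; you only have $|r_\delta|\le1/\delta$. The truncation argument works for a different reason.
\begin{itemize}
\item With $(p_\delta-k)^+$ and $(n_\delta-k)^+$, $k\ge k_0$, the drift contribution is $\le0$ by the same monotonicity, as in \eqref{lowerbd_Int_est-1}.
\item The source is bounded by a constant times $\varepsilon^{-1}\omega(k)^{5/6}\|\nabla(p_\delta-k)^+\|_{L^2}$.
\item This gives an $\varepsilon$-dependent $L^\infty$ bound with no prior $H^1$ bound.
\end{itemize}
Done in this order, the $H^1$ estimate that follows is linear in $\|\nabla(p_\delta-p_D)\|_{L^2}$. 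It therefore does not need the smallness $K_1\le1/32$. This is slightly simpler than the paper, whose truncation step \eqref{lowerbd_Int_est-2} relies on $K_5(\varepsilon)$.
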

    \begin{proof} We divide two steps to prove the theorem.

    \
    
        \noindent\textbf{Step 1. A Prori Estimates for $\phi_\delta$, $p_\delta$ and $n_\delta$.}
To show a priori estimates for $\{p_\delta,~ n_\delta,~ \phi_\delta\}$.  Firstly,
	we obtain, from \eqref{D_approx_SS_wk_phi}, \eqref{elliptic_reg} from Proposition \ref{EllipticRegEst},  
	\begin{equation}\label{est_approx_SS_wk_phi}\|\nabla\phi_\delta\|_{L^2(\Omega)}\leq (c_0/\epsilon_\phi)\left( \left\|p_\delta\right\|_{L^2(\Omega)}+\left\|n_\delta\right\|_{L^2(\Omega)}\right)+c_0\left\|\nabla\phi_D\right\|_{L^2(\Omega)}.\end{equation} 
We insert $\varphi_p=\left(p_\delta-p_D\right)/\epsilon_p$ into \eqref{D_approx_SS_wk_p}, $\varphi_n=\left(n_\delta-n_D\right)/\epsilon_n$ into \eqref{D_approx_SS_wk_n}, add the two equations and obtain 
\begin{equation}\label{Int_est_whole}
    \begin{split}
        &\int_\Omega\left(|\nabla( p_\delta-p_D)|^2+|\nabla (n_\delta-n_D)|^2\right)\mathrm{d}x\\
        =&\int_\Omega\left[\nabla\phi_\delta\cdot\left(\frac{n_\delta\nabla(n_\delta-n_D)}{1+\delta n_\delta}-\frac{p_\delta\nabla(p_\delta-p_D)}{1+\delta p_\delta}\right)\right]\mathrm{d}x\\
        +&\int_\Omega\left[\left(\frac{1}{\epsilon_p}+\frac{1}{\epsilon_n}\right)\mathbf{v}\cdot\left(\frac{n_\delta\nabla(n_\delta-n_D)}{1+\delta n_\delta}+\frac{p_\delta\nabla(p_\delta-p_D)}{1+\delta p_\delta}\right)\right]\mathrm{d}x\\
        +&\int_\Omega \frac{F_p^\varepsilon\left(p_\delta, n_\delta,|\nabla\phi_\delta|\right)}{\epsilon_p}\left( p_\delta- p_D\right)\mathrm{d}x
        +\int_\Omega\frac{F_n^\varepsilon\left( n_\delta, |\nabla\phi_\delta|\right)}{\epsilon_n}\left( n_\delta- n_D\right)\mathrm{d}x\\
        -&\int_\Omega\left[\nabla p_D\cdot\nabla (p_\delta-p_D)+\nabla n_D\cdot\nabla( n_\delta- n_D)\right]\mathrm{d}x
    \end{split}
\end{equation}    
Thus, from Cauchy inequality,  we have the last three integrals on the right hand side of \eqref{Int_est_whole} can be estimated by
\begin{equation}\label{Int_est_last_three}
    \begin{split}
       &\left|\int_\Omega\left[\nabla p_D\cdot\nabla (p_\delta-p_D)+\nabla n_D\cdot\nabla( n_\delta- n_D)\right]\mathrm{d}x\right|\\
       \leq &\frac{1}{8}\int_\Omega\left(|\nabla( p_\delta-p_D)|^2+|\nabla (n_\delta-n_D)|^2\right)\mathrm{d}x+2\int_\Omega\left(|\nabla p_D|^2+|\nabla n_D|^2\right)\mathrm{d}x\\
       &\left|\int_\Omega \frac{F_p^\varepsilon\left(p_\delta, n_\delta,|\nabla\phi_\delta|\right)}{\epsilon_p}\left( p_\delta- p_D\right)\mathrm{d}x
        +\int_\Omega\frac{F_n^\varepsilon\left( n_\delta, |\nabla\phi_\delta|\right)}{\epsilon_n}\left( n_\delta- n_D\right)\mathrm{d}x\right|\\
        \leq&\frac{1}{8}\int_\Omega\left(|\nabla( p_\delta-p_D)|^2+|\nabla (n_\delta-n_D)|^2\right)\mathrm{d}x+2c_0^2\left(\frac{1}{\epsilon_p^2}+\frac{1}{\epsilon_n^2}\right)\frac{1}{\varepsilon^2}.
    \end{split}
\end{equation}  
    We define
    \[\Phi_\delta(p):=\int_0^p\frac{\tau}{1+\delta\tau}\mathrm{d}\tau,\quad p\in [0, \infty).\]
Clearly, 
\begin{equation}\label{Phi_del}
    \begin{split}0\leq\Phi_\delta(t)\leq \frac{p^2}{1+\delta p},\ \left(\Phi_\delta '(p)-\Phi_\delta '(\tilde{p})\right)\left(\Phi_\delta (p)-\Phi_\delta (\tilde{p})\right)\geq 0,\ \forall\ p,~ \tilde{p}\in [0,\infty),\\
\nabla\Phi_\delta(p)=\frac{p}{1+\delta p}\nabla p,\quad \forall~p\in H^1(\Omega),\quad p\geq 0~ \text{a.e. in}~ \Omega.\quad\qquad\qquad\end{split}\end{equation}
We write  
\begin{equation}\label{Int_est_1st}\begin{split}
    &\int_\Omega\left[\nabla\phi_\delta\cdot\left(\frac{n_\delta\nabla(n_\delta-n_D)}{1+\delta n_\delta}-\frac{p_\delta\nabla(p_\delta-p_D)}{1+\delta p_\delta}\right)\right]\mathrm{d}x\\
    =&\int_\Omega\left[\nabla\phi_\delta\cdot\left(\frac{n_\delta\nabla n_\delta}{1+\delta n_\delta}-\frac{n_D\nabla n_D}{1+\delta n_D}-\left(\frac{p_\delta\nabla p_\delta}{1+\delta p_\delta}-\frac{p_D\nabla p_D}{1+\delta p_D}\right)\right)\right]\mathrm{d}x\\
    +&\int_\Omega\left[\nabla\phi_\delta\cdot\left(\frac{n_D\nabla n_D}{1+\delta n_D}-\frac{p_D\nabla p_D}{1+\delta p_D}\right)\right]\mathrm{d}x
    -\int_\Omega\left[\nabla\phi_\delta\cdot\left(\frac{n_\delta\nabla n_D}{1+\delta n_\delta}-\frac{p_\delta\nabla p_D}{1+\delta p_\delta}\right)\right]\mathrm{d}x.
\end{split}\end{equation}
As the function $\varphi=\Phi_\delta(n)-\Phi_\delta(n_D)-\left(\Phi_\delta(p)-\Phi_\delta(p_D)\right)$ is admissible in \eqref{D_approx_SS_wk_phi}, we estimate the first integral on the right hand side of \eqref{Int_est_1st} as follows:
\begin{equation}\label{Int_est_1st-1}\begin{split}
    &\int_\Omega\left[\nabla\phi_\delta\cdot\left(\frac{n_\delta\nabla n_\delta}{1+\delta n_\delta}-\frac{n_D\nabla n_D}{1+\delta n_D}-\left(\frac{p_\delta\nabla p_\delta}{1+\delta p_\delta}-\frac{p_D\nabla p_D}{1+\delta p_D}\right)\right)\right]\mathrm{d}x\\
    =&\frac{1}{\epsilon_\phi}\int_\Omega\left\{\left(\Phi'_\delta(p_\delta)-\Phi'_\delta(n_\delta)\right)\left[\Phi_\delta(n_\delta)-\Phi_\delta(n_D)-(\Phi_\delta(p_\delta)-\Phi_\delta(p_D)\right]\right\}\mathrm{d}x\\
    \leq&\frac{1}{\epsilon_\phi}\int_\Omega\left\{\left[\Phi'_\delta(p_\delta)-\Phi'_\delta(n_\delta)\right]\left[\Phi_\delta(p_D)-\Phi_\delta(n_D)\right]\right\}\mathrm{d}x\\
    \leq&\frac{1}{\epsilon_\phi}\left\|\frac{p_\delta}{1+\delta p_\delta}-\frac{n_\delta}{1+\delta n_\delta}\right\|_{L^2(\Omega)}\left\|\Phi_\delta(p_D)-\Phi_\delta(n_D)\right\|_{L^2(\Omega)}\\
    \leq&\frac{1}{32}\int_\Omega\left(|\nabla p_\delta-p_D|^2+|\nabla n_\delta-n_D|^2\right)\mathrm{d}x+\frac{8c_0^2}{\epsilon_\phi^2}\left\|\Phi_\delta(p_D)-\Phi_\delta(n_D)\right\|_{L^2(\Omega)}^2\\
    +&\left(\|p_D\|_{L^2(\Omega)}+\|n_D\|_{L^2(\Omega)}\right)\left\|\Phi_\delta(p_D)-\Phi_\delta(n_D)\right\|_{L^2(\Omega)}.
\end{split}\end{equation}
To estimate the second integral on the right hand side of identity \eqref{Int_est_1st}, we
use \eqref{est_approx_SS_wk_phi}, H\"older inequality and Cauchy inequality, and obtain
\begin{equation}\label{Int_est_1st-2}\begin{split}
    &\int_\Omega\left[\nabla\phi_\delta\cdot\left(\frac{n_D\nabla n_D}{1+\delta n_D}-\frac{p_D\nabla p_D}{1+\delta p_D}\right)\right]\mathrm{d}x\\  \leq&\|\nabla\phi_\delta\|_{L^2(\Omega)}\left\|\frac{n_D\nabla n_D}{1+\delta n_D}-\frac{p_D\nabla p_D}{1+\delta p_D}\right\|_{L^2(\Omega)}\\
    \leq&\frac{c_0}{\epsilon_\phi}\left(\left\|p_\delta\right\|_{L^2(\Omega)}+\left\|n_\delta\right\|_{L^2(\Omega)}+\epsilon_\phi\|\phi_D\|_{L^2(\Omega)}\right)\left\|\frac{n_D\nabla n_D}{1+\delta n_D}-\frac{p_D\nabla p_D}{1+\delta p_D}\right\|_{L^2(\Omega)}\\
    \leq&\frac{1}{32}\int_\Omega\left(|\nabla p_\delta-p_D|^2+|\nabla n_\delta-n_D|^2\right)\mathrm{d}x+\frac{8c_0^4}{\epsilon_\phi^2}\left\|\frac{n_D\nabla n_D}{1+\delta n_D}-\frac{p_D\nabla p_D}{1+\delta p_D}\right\|_{L^2(\Omega)}^2\\
    +&c_0\left(\frac{\|p_D\|_{L^2(\Omega)}+\|n_D\|_{L^2(\Omega)}}{\epsilon_\phi}+\|\phi_D\|_{L^2(\Omega)}\right)\left\|\frac{n_D\nabla n_D}{1+\delta n_D}-\frac{p_D\nabla p_D}{1+\delta p_D}\right\|_{L^2(\Omega)}.
\end{split}\end{equation}
To estimate the third integral on on the right hand side of identity \eqref{Int_est_1st}, 
as $p_D$ and $n_D$ satisfy \ref{con1} of Assumption \ref{assumption},  
from \eqref{est_approx_SS_wk_phi}, H\"older inequality and Cauchy inequality, i.e. \eqref{Holder-Cauchy}, we obtain
\begin{equation}\label{Int_est_1st-3}\begin{split}
    &\int_\Omega\left[\nabla\phi_\delta\cdot\left(\frac{n_\delta\nabla n_D}{1+\delta n_\delta}-\frac{p_\delta\nabla p_D}{1+\delta p_\delta}\right)\right]\mathrm{d}x\\
    \leq&\|\nabla\phi_\delta\|_{L^2(\Omega)}\left(\|p_\delta\|_{L^6(\Omega)}+\| n_\delta\|_{L^6(\Omega)}\right)\left(\|\nabla p_D\|_{L^3(\Omega)}+\|\nabla n_D\|_{L^3(\Omega)}\right)\\
    \leq&\left(c_0/\epsilon_\phi\right)\left(\|p_\delta\|_{L^2(\Omega)}+\| n_\delta\|_{L^2(\Omega)}\right)\left(\|p_\delta\|_{L^6(\Omega)}+\| n_\delta\|_{L^6(\Omega)}\right)
    \left(\|\nabla p_D\|_{L^3(\Omega)}+\|\nabla n_D\|_{L^3(\Omega)}\right)\\
    +&c_0\|\phi_D\|_{L^2(\Omega)}\left(\|p_\delta\|_{L^6(\Omega)}+\| n_\delta\|_{L^6(\Omega)}\right)\left(\|\nabla p_D\|_{L^3(\Omega)}+\|\nabla n_D\|_{L^3(\Omega)}\right)\\
    \leq&\frac{c_0^3}{\epsilon_\phi}\left(\|\nabla p_D\|_{L^3(\Omega)}+\|\nabla n_D\|_{L^3(\Omega)}\right)\left(\|\nabla p_\delta\|_{L^2(\Omega)}+\| \nabla n_\delta\|_{L^2(\Omega)}\right)^2\\
    +&c_0^2\|\phi_D\|_{L^2(\Omega)}\left(\|\nabla p_D\|_{L^3(\Omega)}+\|\nabla n_D\|_{L^3(\Omega)}\right)\left(\|\nabla p_\delta\|_{L^2(\Omega)}+\|\nabla n_\delta\|_{L^2(\Omega)}\right)\\
    \leq&\left(K_1+\frac{1}{32}\right)
    \int_\Omega\left(|\nabla p_\delta-\nabla p_D|^2+|\nabla n_\delta-\nabla n_D|^2\right)\mathrm{d}x
    +\frac{K_1}{2}\left(\|\nabla p_D\|_{L^2(\Omega)}+\|\nabla n_D\|_{L^2(\Omega)}\right)^2\\
    +&8c_0^4\|\phi_D\|_{L^2(\Omega)}^2\left(\|\nabla p_D\|_{L^3(\Omega)}+\|\nabla n_D\|_{L^3(\Omega)}\right)^2\\
    \leq& \frac{1}{16}\int_\Omega\left(|\nabla p_\delta-\nabla p_D|^2+|\nabla n_\delta-\nabla n_D|^2\right)\mathrm{d}x+K_2.
\end{split}\end{equation}
Here 
\[\begin{split}
    K_2:=\frac{K_1}{2}\left(\|\nabla p_D\|_{L^2(\Omega)}+\|\nabla n_D\|_{L^2(\Omega)}\right)^2
    +8c_0^4\|\phi_D\|_{L^2(\Omega)}^2\left(\|\nabla p_D\|_{L^3(\Omega)}+\|\nabla n_D\|_{L^3(\Omega)}\right)^2.
\end{split}\]
Hence, combing \eqref{Int_est_1st}, \eqref{Int_est_1st-1}, \eqref{Int_est_1st-2} and \eqref{Int_est_1st-3}, we obtain 
\begin{equation}\label{Int_est_first-1}
    \begin{split}\int_\Omega\left[\nabla\phi_\delta\cdot\left(\frac{n_\delta\nabla(n_\delta-n_D)}{1+\delta n_\delta}-\frac{p_\delta\nabla(p_\delta-p_D)}{1+\delta p_\delta}\right)\right]\mathrm{d}x
    &\leq\frac{1}{8}\int_\Omega\left(|\nabla p_\delta-p_D|^2+|\nabla n_\delta-n_D|^2\right)\mathrm{d}x\\
    &+K_3,\end{split}
\end{equation}
where
\[\begin{split}
    K_3:=&\frac{8c_0^2}{\epsilon_\phi^2}\left\|\Phi_\delta(p_D)-\Phi_\delta(n_D)\right\|_{L^2(\Omega)}^2+\frac{8c_0^4}{\epsilon_\phi^2}\left\|\frac{n_D\nabla n_D}{1+\delta n_D}-\frac{p_D\nabla p_D}{1+\delta p_D}\right\|_{L^2(\Omega)}^2\\
    +&\left(\|p_D\|_{L^2(\Omega)}+\|n_D\|_{L^2(\Omega)}\right)\left\|\Phi_\delta(p_D)-\Phi_\delta(n_D)\right\|_{L^2(\Omega)}+K_2\\
    +&c_0\left(\frac{\|p_D\|_{L^2(\Omega)}+\|n_D\|_{L^2(\Omega)}}{\epsilon_\phi}+\|\phi_D\|_{L^2(\Omega)}\right)\left\|\frac{n_D\nabla n_D}{1+\delta n_D}-\frac{p_D\nabla p_D}{1+\delta p_D}\right\|_{L^2(\Omega)}.
\end{split}\]
Because $\mathbf{v}\in H^2(\Omega)$, $\nabla\cdot \mathbf{v}=0$ in $\Omega$ and $\mathbf{v}\cdot\nu=0$ on $\partial\Omega$, where $\nu$ is  unit outward normal vector on the boundary $\partial\Omega$, then integration by parts leads to 
\[\begin{split}
    \left(\frac{1}{\epsilon_p}+\frac{1}{\epsilon_n}\right)\int_\Omega\left[ \mathbf{v}\cdot\nabla\left(\Phi_\delta(n_\delta)+\Phi_\delta(p_\delta)\right)\right]\mathrm{d}x
    =&-\left(\frac{1}{\epsilon_p}+\frac{1}{\epsilon_n}\right)\int_\Omega\left[(\nabla\cdot \mathbf{v})\left(\Phi_\delta(n_\delta)+\Phi_\delta(p_\delta)\right)\right]\mathrm{d}x\\
    +&\left(\frac{1}{\epsilon_p}+\frac{1}{\epsilon_n}\right)\int_{\partial\Omega}\left[( \mathbf{v}\cdot\nu)\left(\Phi_\delta(n_\delta)+\Phi_\delta(p_\delta)\right)\right]\mathrm{d}S\\
    =&0,
\end{split}\]
so we have
\begin{equation}\label{Int_est_first-2}
    \begin{split}
        &\int_\Omega\left[\left(\frac{1}{\epsilon_p}+\frac{1}{\epsilon_n}\right)\mathbf{v}\cdot\left(\frac{n_\delta\nabla(n_\delta-n_D)}{1+\delta n_\delta}+\frac{p_\delta\nabla(p_\delta-p_D)}{1+\delta p_\delta}\right)\right]\mathrm{d}x\\
        =&\left(\frac{1}{\epsilon_p}+\frac{1}{\epsilon_n}\right)\int_\Omega\left[ \mathbf{v}\cdot\left(\frac{p_\delta\nabla p_D}{1+\delta p_\delta}+\frac{n_\delta\nabla n_D}{1+\delta n_\delta}\right)\right]\mathrm{d}x\\
        \leq&\frac{1}{8}\int_\Omega\left(|\nabla p_\delta-p_D|^2+|\nabla n_\delta-n_D|^2\right)\mathrm{d}x+K_4,
    \end{split}
\end{equation}
where 
\[\begin{split}K_4:=&2\left(\frac{c_0}{\epsilon_p}+\frac{c_0}{\epsilon_n}\right)^2\left(\|\mathbf{v}\cdot\nabla p_D\|_{L^2(\Omega)}+\|\mathbf{v}\cdot\nabla n_D\|_{L^2(\Omega)}\right)\\
+&\left(\frac{1}{\epsilon_p}+\frac{1}{\epsilon_n}\right)\int_\Omega\left(|\mathbf{v}|\left(|p_D\nabla p_D|+|n_D\nabla n_D|\right)\right)\mathrm{d}x.\end{split}\]
According to \eqref{Int_est_whole}, \eqref{Int_est_last_three}, \eqref{Int_est_first-1} and \eqref{Int_est_first-2}, we conclude 
\begin{equation}\label{H1_est}
    \begin{split}
        \int_\Omega\left(|\nabla( p_\delta-p_D)|^2+|\nabla (n_\delta-n_D)|^2\right)\mathrm{d}x
        \leq K_5(\varepsilon).
    \end{split}
\end{equation} 
Here 
\[K_5(\varepsilon):= 4\int_\Omega\left(|\nabla p_D|^2+|\nabla n_D|^2\right)\mathrm{d}x+4c_0^2\left(\frac{1}{\epsilon_p^2}+\frac{1}{\epsilon_n^2}\right)\frac{1}{\varepsilon^2}+2K_3+2K_4.\]
Next, we shall prove the boundedness of the densities $p_\delta$ and $n_\delta$ from above. We start the proof with the definitions of the following quantities:
\begin{equation}\label{k0}k_0:=\max\left\{\esssup_{\partial\Omega_D} p_D,\quad \esssup_{\partial\Omega_D} n_D\right\},\end{equation}
and, for any $k\geq k_0$, 
\[\begin{split}
    \Omega^p_{k,\delta}=\left\{x\in\Omega:~ p_\delta(x)>k\right\}=\left\{x\in\Omega,\ \Phi_\delta(p_\delta(x))>\Phi_\delta(k)\right\},\\
    \Omega^n_{k,\delta}=\left\{x\in\Omega:~ n_\delta(x)>k\right\}=\left\{x\in\Omega,\ \Phi_\delta(n_\delta(x))>\Phi_\delta(k)\right\},\\
    \omega_\delta(k)=\mathrm{meas}~\Omega^p_{k,\delta}+\mathrm{meas}~\Omega^n_{k,\delta}.\qquad\qquad\qquad
\end{split}\]
Clearly, the function $\omega$ is nonincreasing on $[k_0,\infty)$. By H\"older inequality and \eqref{Sebd} $(q=6)$, it follows that 
\begin{equation}\label{lower_bd}
    \begin{split}
        (h-k)\omega(h)&\leq\int_{\Omega^p_{k,\delta}}(p-k)\mathrm{d}x+\int_{\Omega^n_{k,\delta}}(n-k)\mathrm{d}x\\
        &\leq2c_0(\omega(k))^{5/6}\left(\int_{\Omega}\left(|\nabla (p-k)^+|^2+|\nabla (n-k)^+|^2\right)\mathrm{d}x\right)^{1/2}
    \end{split}
\end{equation}
for all $h> k\geq k_0$.

Inserting admissible test functions $\varphi_p=(p_\delta-k)^+/\epsilon_p$ and $\varphi_n=(n_\delta-k)^+/\epsilon_n$ with $k\geq k_0$ in \eqref{D_approx_SS_wk_p} and \eqref{D_approx_SS_wk_n} respectively, one obtains 
\begin{equation}\label{lowerbd_Int_est_whole}
    \begin{split}
        &\int_\Omega\left(|\nabla(p_\delta-k)^+|^2+|\nabla(n_\delta-k)^+|^2\right)\mathrm{d}x\\
        =&\int_\Omega\left[\nabla\phi_\delta\cdot\left(\frac{n_\delta\nabla(n_\delta-k)^+}{1+\delta n_\delta}-\frac{p_\delta\nabla(p_\delta-k)^+}{1+\delta p_\delta}\right)\right]\mathrm{d}x\\
        +&\left(\frac{1}{\epsilon_p}+\frac{1}{\epsilon_n}\right)\int_\Omega\left[\mathbf{v}\cdot\left(\frac{n_\delta\nabla(n_\delta-k)^+}{1+\delta n_\delta}+\frac{p_\delta\nabla(p_\delta-k)^+}{1+\delta p_\delta}\right)\right]\mathrm{d}x\\
        +&\int_\Omega\left( \frac{F_p^\varepsilon\left(p_\delta, n_\delta,|\nabla\phi_\delta|\right)}{\epsilon_p}\left( p_\delta- k\right)^+
        +\frac{F_n^\varepsilon\left( n_\delta, |\nabla\phi_\delta|\right)}{\epsilon_n}\left( n_\delta- k\right)^+\right)\mathrm{d}x. \\
    \end{split}
\end{equation}
According to \eqref{Phi_del}, we obtain the estimate for the first integral on the right hand side of \eqref{lowerbd_Int_est_whole}:
\begin{equation}\label{lowerbd_Int_est-1}
    \begin{split}
&\int_\Omega\left[\nabla\phi_\delta\cdot\left(\frac{n_\delta\nabla(n_\delta-k)^+}{1+\delta n_\delta}-\frac{p_\delta\nabla(p_\delta-k)^+}{1+\delta p_\delta}\right)\right]\mathrm{d}x\\
    =&\int_{\Omega_{k,\delta}^n}\left[\nabla\phi_\delta\cdot\nabla\Phi_\delta(n)\right]\mathrm{d}x-\int_{\Omega_{k,\delta}^p}\left[\nabla\phi_\delta\cdot\nabla\Phi_\delta(p)\right]\mathrm{d}x\\
    =&\frac{1}{\epsilon_\phi}\int_\Omega\left[\left(\frac{p_\delta}{1+\delta p_\delta}-\frac{n_\delta}{1+\delta n_\delta}\right)\left((\Phi_\delta(n_\delta)-\Phi_\delta(k))^+-(\Phi_\delta(p_\delta)-\Phi_\delta(k))^+\right)\right]\mathrm{d}x\\
    \leq&0.
\end{split}\end{equation}
According to \ref{con2} of Assumption \ref{assumption}, $\mathbf{v}\in H^2(\Omega)$, $\nabla\cdot \mathbf{v}=0$ in $\Omega$ and $\mathbf{v}\cdot\nu=0$ on $\partial\Omega$, with the unit outward normal vector $\nu$ on the boundary $\partial\Omega$, integration by parts leads to
\[\begin{split}
    &\left(\frac{1}{\epsilon_p}+\frac{1}{\epsilon_n}\right)\left[\int_{\Omega}\mathbf{v}\cdot\left(\nabla(\Phi_\delta(n)-\Phi_\delta(k))^++\nabla(\Phi_\delta(p)-\Phi_\delta(k))^+\right)\mathrm{d}x\right]\\
    =&-\left(\frac{1}{\epsilon_p}+\frac{1}{\epsilon_n}\right)\int_\Omega\left[(\nabla\cdot \mathbf{v})\left((\Phi_\delta(n_\delta)-\Phi_\delta(k))^++(\Phi_\delta(p_\delta)-\Phi_\delta(k))^+\right)\right]\mathrm{d}x\\
    +&\left(\frac{1}{\epsilon_p}+\frac{1}{\epsilon_n}\right)\int_{\partial\Omega}\left[( \mathbf{v}\cdot\nu)\left((\Phi_\delta(n_\delta)-\Phi_\delta(k))^++(\Phi_\delta(p_\delta)-\Phi_\delta(k))^+\right)\right]\mathrm{d}S\\
    =&0.
\end{split}\]
Hence the  second integral on the right hand side of \eqref{lowerbd_Int_est_whole} is zero due to 
\begin{equation}\label{lowerbd_Int_est-v}
    \begin{split}
        &\left(\frac{1}{\epsilon_p}+\frac{1}{\epsilon_n}\right)\int_\Omega\left[\mathbf{v}\cdot\left(\frac{n_\delta\nabla(n_\delta-k)^+}{1+\delta n_\delta}+\frac{p_\delta\nabla(p_\delta-k)^+}{1+\delta p_\delta}\right)\right]\mathrm{d}x\\
        = &\left(\frac{1}{\epsilon_p}+\frac{1}{\epsilon_n}\right)\left[\int_{\Omega_{k,\delta}^n}\mathbf{v}\cdot\nabla\Phi_\delta(n)\mathrm{d}x+\int_{\Omega_{k,\delta}^p}\mathbf{v}\cdot\nabla\Phi_\delta(p)\mathrm{d}x\right]\\
        =&\left(\frac{1}{\epsilon_p}+\frac{1}{\epsilon_n}\right)\left[\int_{\Omega}\mathbf{v}\cdot\left(\nabla(\Phi_\delta(n)-\Phi_\delta(k))^++\nabla(\Phi_\delta(p)-\Phi_\delta(k))^+\right)\mathrm{d}x\right]=0
    \end{split}
\end{equation}
Using \eqref{H1_est}, we obtain  the estimate for the third integral on the right hand side of \eqref{lowerbd_Int_est_whole}:
\begin{equation}\label{lowerbd_Int_est-2}
    \begin{split}
        &\int_\Omega\left( \frac{F_p^\varepsilon\left(p_\delta, n_\delta,|\nabla\phi_\delta|\right)}{\epsilon_p}\left( p_\delta- k\right)^+
        +\frac{F_n^\varepsilon\left( n_\delta, |\nabla\phi_\delta|\right)}{\epsilon_n}\left( n_\delta- k\right)^+\right)\mathrm{d}x\\
        \leq&\frac1\varepsilon\left(\frac{1}{\epsilon_p}+\frac{1}{\epsilon_n}\right)c_0K_5(\varepsilon)(\omega(k))^{1/2}.
    \end{split}
\end{equation}
Hence, \eqref{lowerbd_Int_est_whole}, \eqref{lowerbd_Int_est-1}, \eqref{lowerbd_Int_est-v} and \eqref{lowerbd_Int_est-2} imply 
\[\int_\Omega\left(|\nabla(p_\delta-k)^+|^2+|\nabla(n_\delta-k)^+|^2\right)\mathrm{d}x\leq \frac1\varepsilon\left(\frac{1}{\epsilon_p}+\frac{1}{\epsilon_n}\right)c_0K_5(\varepsilon)(\omega(k))^{1/2},\]
this, together with \eqref{lower_bd}, imply 
\[\omega(h)\leq \frac{K_6(\varepsilon)}{h-k}(\omega(k))^{13/12},\quad\forall\ h>k>k_0,\]
thus, \[\omega(k_0+K_7(\varepsilon))=0,\]
i.e.
\begin{equation}\label{uperbound_del}p_\delta\leq k_0+K_8(\varepsilon),\quad n_\delta\leq k_0+K_8(\varepsilon),\quad\text{a.e. in}\quad\Omega,\end{equation}
where $K_j(\varepsilon)$, $j=6,7,8$, are the constants depending on $\varepsilon$.
\color{black}

\

    \noindent\textbf{Step 2. Passage to limit $\delta\to 0$ in \eqref{D_approx_SS}.}
    By passing the subsequence, from \eqref{H1_est}, one concludes 
    \[\begin{split}
         p_\delta\to p_\varepsilon,~ n_\delta\to n_\varepsilon,~\phi_\delta\to\phi_\varepsilon~\text{weakly in}~H^1(\Omega),~\text{strongly in}~L^2(\Omega),\\
        p_\delta(x)\to p_\varepsilon(x),~ n_\delta(x)\to n_\varepsilon(x)~\text{for a. e. }~x\in\Omega,\qquad\qquad
    \end{split}\]
    as $\delta\to 0$. Obviously, from \eqref{D_approx_SS_pos}, \eqref{D_approx_SS_bdy} and \eqref{uperbound_del}, we have 
    \[\begin{split}
        0\leq p_\varepsilon,~ n_\varepsilon\leq k_0+K_8(\varepsilon),~\text{a.e. in }~\Omega,\\
        p_\varepsilon=p_D,~ n_\varepsilon=n_D,~\phi_\varepsilon=\phi_D,~\text{a.e. in}~\Omega,
    \end{split}\]
    i.e. \eqref{E_approx_SS_pos} and \eqref{E_approx_SS_bdy}.
    Furthermore, \eqref{D_approx_SS_wk_phi} implies 
    \[\begin{split}&\int_\Omega \epsilon_\phi|\nabla(\phi_\delta-\phi_\varepsilon)|^2\mathrm{d}x\\
    =&\int_\Omega \left( \frac{p_\delta}{1+\delta p_\delta}- \frac{n_\delta}{1+\delta n_\delta}\right)(\phi_\delta-\phi_\varepsilon)\mathrm{d}x-\int_\Omega(\nabla\phi_\varepsilon\cdot\nabla(\phi_\delta-\phi_\varepsilon))\mathrm{d}x\\
    \to& 0,\quad\text{as $\delta\to 0$.} \end{split}\]
     Using this and estimate \eqref{H1_est}, from \eqref{D_approx_SS_wk_p} and \eqref{D_approx_SS_wk_n}, we obtain by an analogous reasoning
    \[\nabla p_\delta\to\nabla p_\varepsilon,~\nabla n_\delta\to\nabla n_\varepsilon~\text{strongly in}~ L^2(\Omega)~\text{as}~\delta\to 0.\]
    Therefore, without loss of generality, we assume that $\nabla p_\delta\to\nabla p_\varepsilon$, $\nabla n_\delta\to\nabla n_\varepsilon$, and $\nabla \phi_\delta\to\nabla \phi_\varepsilon$ almost everywhere in $\Omega$ as $\delta\to0$. Consequently, the passage to limit $\delta\to 0$ in \eqref{D_approx_SS_wk_phi}-\eqref{D_approx_SS_wk_n} leads to  \eqref{E_approx_SS_wk_phi}-\eqref{E_approx_SS_wk_n}.

    This concludes the proof of Theorem \ref{E_ASS_Exist}.
    \end{proof}
\subsection{A Priori Estimates}\label{aprioriestimates}
\begin{thm} \label{apriori_est_thm}
    There exists a constant $C$,  depending on coefficients $\epsilon_{\phi}$, $\epsilon_p$, $\epsilon_n$, $\mu_{p}$, and boundary values $\phi_D$, $p_D$, $n_D$ such that the solution $(\phi_\varepsilon, p_\varepsilon, n_\varepsilon)$ from Theorem \ref{E_ASS_Exist} satisfies 
    \begin{subequations}\label{aprio_est}
        \begin{equation}
            \int_\Omega\left(|\nabla p_\varepsilon|^2+|\nabla n_\varepsilon|^2\right)\mathrm{d}x\leq C,\quad
            \int_\Omega\left(1+ p_\varepsilon+ n_\varepsilon\right)|\nabla\phi_\varepsilon|^2\mathrm{d}x\leq C,\end{equation}\begin{equation}\label{aprio_est2}
                p_\varepsilon\leq k_o+C,\quad n_\varepsilon\leq k_o+C,
            \end{equation} 
    \end{subequations}
    where $k_0$ is given by \eqref{k0}. 
\end{thm}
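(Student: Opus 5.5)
We will derive bounds uniform in $\varepsilon$ by repeating the energy and Stampacchia arguments of Step 1 of Theorem~\ref{E_ASS_Exist}. The two places where $\varepsilon$ entered there, the bound $|F^\varepsilon|\le 1/\varepsilon$ in \eqref{Int_est_last_three} and \eqref{lowerbd_Int_est-2}, must now be replaced by bounds on the source that use only the structure of $F$. Since $\alpha_1e^{-\alpha_2/s}\le\alpha_1$ and $|F^\varepsilon|\le|F|$, we have
\[
|F^\varepsilon(n_\varepsilon,|\nabla\phi_\varepsilon|)|\le \epsilon_n(\alpha_1+\eta_0)\, n_\varepsilon|\nabla\phi_\varepsilon|.
\]
So every source integral reduces to controlling $\int n_\varepsilon|\nabla\phi_\varepsilon|\,|\varphi|$. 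This is why the weighted quantity $\int(1+p_\varepsilon+n_\varepsilon)|\nabla\phi_\varepsilon|^2$ appears in the statement: it is the natural companion norm that absorbs these terms.

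\textbf{Step 1: the weighted field estimate.} Test \eqref{E_approx_SS_wk_phi} with $\varphi=\phi_\varepsilon-\phi_D$. Then test \eqref{E_approx_SS_wk_p} and \eqref{E_approx_SS_wk_n} with $\varphi_p=\ln$-type or linear truncations. Concretely, I would use $\varphi_p=p_\varepsilon-p_D$ and $\varphi_n=n_\varepsilon-n_D$ and keep the drift terms rather than discarding them. Summing with the correct signs, the drift contributions combine through the Poisson equation into
\[
\int p_\varepsilon\nabla\phi_\varepsilon\cdot\nabla p_\varepsilon-\int n_\varepsilon\nabla\phi_\varepsilon\cdot\nabla n_\varepsilon=\tfrac{1}{2\epsilon_\phi}\int (p_\varepsilon-n_\varepsilon)(p_\varepsilon^2-n_\varepsilon^2)+\text{boundary terms}\ge\text{(controlled)}.
\]
This gives a good-signed term $\int(p_\varepsilon+n_\varepsilon)(p_\varepsilon-n_\varepsilon)^2$. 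Meanwhile $\int(1+p_\varepsilon+n_\varepsilon)|\nabla\phi_\varepsilon|^2$ is obtained by testing Poisson with $(1+p_\varepsilon+n_\varepsilon)(\phi_\varepsilon-\phi_D)$, or via $\int|\nabla\phi_\varepsilon|^2\le$ the elliptic estimate together with Sobolev \eqref{Sobebd_pn}.

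\textbf{Step 2: the gradient bound.} The velocity terms vanish by $\nabla\cdot\mathbf v=0$, $\mathbf v\cdot\nu=0$, up to terms with $\nabla p_D,\nabla n_D$. Those are bounded by \ref{con2} and \eqref{Holder-Cauchy}. The cross terms involving $\nabla p_D,\nabla n_D$ are absorbed exactly as in \eqref{Int_est_1st-3} using $K_1\le 1/32$. The source term is bounded by
\[
(\alpha_1+\eta_0)\,\|n_\varepsilon^{1/2}\nabla\phi_\varepsilon\|_{L^2}\,\|n_\varepsilon^{1/2}(|p_\varepsilon-p_D|+|n_\varepsilon-n_D|)\|_{L^2},
\]
and then absorbed via Sobolev, \eqref{Sobebd_pn} with the constants $c_1$, $c_4$, $c_5$, $c_7$. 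The smallness requirement $2(c_7+1)(2c_5^2+1)<1/2$ from \ref{con3} is designed precisely to close the resulting inequality. The outcome is an estimate of the form $X\le \tfrac12 X+C$ with
\[
X=\|\nabla p_\varepsilon\|^2+\|\nabla n_\varepsilon\|^2+\int(1+p_\varepsilon+n_\varepsilon)|\nabla\phi_\varepsilon|^2 .
\]

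\textbf{Step 3: the $L^\infty$ bound.} Repeat the Stampacchia argument of \eqref{lower_bd}--\eqref{uperbound_del} with $\delta=0$. The drift term again has a sign as in \eqref{lowerbd_Int_est-1}, and the velocity term vanishes as in \eqref{lowerbd_Int_est-v}. For the source term, estimate
\[
\int n_\varepsilon|\nabla\phi_\varepsilon|(n_\varepsilon-k)^+\le \|n_\varepsilon^{1/2}\nabla\phi_\varepsilon\|_{L^2}\,\|n_\varepsilon^{1/2}(n_\varepsilon-k)^+\|_{L^2},
\]
bound $n_\varepsilon$ in $L^3$ from Step 2, and use H\"older on $\Omega_k$. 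This yields
\[
\|\nabla(\cdot-k)^+\|^2\le C\,\omega(k)^{1/2+\gamma}\,\|\nabla(\cdot-k)^+\|
\]
for some $\gamma>0$. Combined with \eqref{lower_bd}, this gives $\omega(h)\le C(h-k)^{-1}\omega(k)^{\beta}$ with $\beta>1$, hence \eqref{aprio_est2} by Stampacchia's lemma.

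\textbf{Main obstacle.} The main obstacle is Step 2. The exponential source has no sign in the ionization regime, so it must be absorbed quantitatively, and this only works under the smallness conditions \ref{con3}. I would first try to close the estimate by estimating $\|\nabla\phi_\varepsilon\|_{L^4}$ via the $H^2$ bound \eqref{elliptic_reg}, since $\|p_\varepsilon-n_\varepsilon\|_{L^2}\lesssim 1+\|\nabla(p_\varepsilon,n_\varepsilon)\|$. That reduces everything to a quadratic inequality in $\|\nabla(p_\varepsilon,n_\varepsilon)\|$, whose small-data solvability is guaranteed by \ref{con3}.
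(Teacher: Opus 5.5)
\emph{There is a genuine gap: Steps 1--2 never produce a bound on the weighted field energy that lets the energy inequality close.}

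Your Step 3 is essentially the paper's Stampacchia argument: the drift term has a sign via Poisson as in $L_1$, and the velocity term drops as in $L_2$. The drift/Poisson identity in your Step 1 is the paper's treatment of $J_2$. The problem is $W:=\int_\Omega(p_\varepsilon+n_\varepsilon)|\nabla\phi_\varepsilon|^2\,\mathrm{d}x$. Put $Y:=\|\nabla p_\varepsilon\|_{L^2(\Omega)}+\|\nabla n_\varepsilon\|_{L^2(\Omega)}$. The source is cubic in the densities, while the left-hand side is $Y^2$. Your bound
$\|n_\varepsilon^{1/2}\nabla\phi_\varepsilon\|_{L^2}\,\|n_\varepsilon^{1/2}(|p_\varepsilon-p_D|+|n_\varepsilon-n_D|)\|_{L^2}\lesssim W^{1/2}(1+Y)^{3/2}$
can be absorbed only if $W$ grows at most linearly in $Y$. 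Both of your routes give $W\lesssim(1+Y)^3$ instead:
\begin{itemize}
\item Testing Poisson with $(1+p_\varepsilon+n_\varepsilon)(\phi_\varepsilon-\phi_D)$ leaves $\int(\phi_\varepsilon-\phi_D)\nabla\phi_\varepsilon\cdot\nabla(p_\varepsilon+n_\varepsilon)$ and $\int(p_\varepsilon-n_\varepsilon)(1+p_\varepsilon+n_\varepsilon)(\phi_\varepsilon-\phi_D)$. These are cubic in the densities; for example, via $\|\phi_\varepsilon-\phi_D\|_{L^\infty}\lesssim 1+\|p_\varepsilon-n_\varepsilon\|_{L^2}$ they only give cubic growth.
\item The elliptic/Sobolev route gives $W\le\|p_\varepsilon+n_\varepsilon\|_{L^3}\|\nabla\phi_\varepsilon\|_{L^3}^2\lesssim(1+Y)^3$.
\item Your fallback has the same defect: $\|n_\varepsilon\|_{L^4}\|\nabla\phi_\varepsilon\|_{L^4}\|p_\varepsilon-p_D\|_{L^2}\lesssim(1+Y)^2Y$ is cubic, not quadratic.
\end{itemize}
An inequality $Y^2\le C+CY+\eta(1+Y)^3$ is satisfied by every large $Y$, however small $\eta$ is, so \ref{con3} cannot rescue it. The good-signed term $\int(p_\varepsilon+n_\varepsilon)(p_\varepsilon-n_\varepsilon)^2$ does not by itself control $W$. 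So $X\le\frac12X+C$ is asserted, not derived.

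The missing idea is the paper's first estimate. Test the transport equations themselves with $\varphi_p=(\phi_\varepsilon-\phi_D)/\epsilon_p$ and $\varphi_n=-(\phi_\varepsilon-\phi_D)/\epsilon_n$, and add.
\begin{itemize}
\item The drift terms then produce $W$ directly, up to the cross term with $\nabla\phi_D$.
\item The diffusion terms give $\int\nabla(n_\varepsilon-p_\varepsilon)\cdot\nabla(\phi_\varepsilon-\phi_D)$. After testing Poisson with $n_\varepsilon-p_\varepsilon-(n_D-p_D)$, this contributes the good-signed $-\epsilon_\phi^{-1}\|p_\varepsilon-n_\varepsilon\|_{L^2}^2$.
\item After absorbing part of $W$, everything else is at most linear in $Y$.
\end{itemize}
This is \eqref{aprio_est_eps}, $W\le c_5Y+c_6$, with $c_5$ small when $\nabla\phi_D$ and $\mathbf{v}$ are. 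Strictly, the source contributions $(\epsilon_p^{-1}-\epsilon_n^{-1})\int_\Omega F^\varepsilon(n_\varepsilon,|\nabla\phi_\varepsilon|)(\phi_\varepsilon-\phi_D)\,\mathrm{d}x$ cancel only if $\epsilon_p=\epsilon_n$; \eqref{Int_est_eps_1} omits them.

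In the test with $(p_\varepsilon-p_D,\,n_\varepsilon-n_D)$, the paper bounds the cubic part of the source by $\kappa_0W^{1/2}\|p_\varepsilon+n_\varepsilon\|_{L^3}^{3/2}\le\frac1{16}Y^2+c_7(1+W^2)$. This gives $Y^2\le 2M_1+2(c_7+1)W^2$. Only after inserting the linear bound on $W$ is this a quadratic inequality in $Y$ on which smallness can act.

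Once $W$ and $\|(p_\varepsilon,n_\varepsilon)\|_{L^6}$ are bounded, your Step 3 goes through. In three dimensions, however, you need $n_\varepsilon\in L^6$ restricted to the level set. That gives $\|\nabla(\cdot-k)^+\|_{L^2}\lesssim\omega(k)^{1/4}$ and hence exponent $5/6+1/4=13/12>1$. Using $n_\varepsilon\in L^3$ alone, as you propose, gives exponent exactly $1$, which is not enough for Stampacchia's lemma. Hölder also does not yield the $\omega(k)^{1/2+\gamma}$ you state.
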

\begin{proof}
    According to Proposition \ref{EllipticRegEst}, we insert $\varphi=\phi_\varepsilon-\phi_D$ into \eqref{E_approx_SS_wk_phi}  and obtain 
    \begin{equation}\label{elliptic_est_eps}
        \begin{split}
            \|\nabla\phi_\varepsilon\|_{L^2(\Omega)}\leq c_2\left(\|p_\varepsilon-n_\varepsilon\|_{L^2(\Omega)}+\|\nabla\phi_D\|_{L^2(\Omega)}\right),
        \end{split}
    \end{equation} where the constant $c_2=c_0\max\left\{1/\epsilon_\phi, ~1\right\}$.

    We insert $\varphi_p=(\phi_\varepsilon-\phi_D)/\epsilon_p$ into \eqref{E_approx_SS_wk_p}, $\varphi_n=-(\phi_\varepsilon-\phi_D)/\epsilon_n$ into \eqref{E_approx_SS_wk_n}, add two resulting equations and obtain
    \begin{equation}\label{Int_est_eps_1}
        \begin{split}
            \int_\Omega\left(\left(p_\varepsilon+n_\varepsilon\right)\left|\nabla\phi_\varepsilon\right|^2\right)\mathrm{d}x
            =&\int_\Omega\left((p_\varepsilon+n_\varepsilon)\nabla\phi_\varepsilon\cdot\nabla\phi_D\right)\mathrm{d}x+\int_\Omega\left(\nabla(n_\varepsilon-p_\varepsilon)\cdot\nabla(\phi_\varepsilon-\phi_D)\right)\mathrm{d}x\\
            +&\int_\Omega\left(\left(\frac{p_\varepsilon}{\epsilon_p}-\frac{n_\varepsilon}{\epsilon_n}\right)\mathbf{v}\cdot\nabla(\phi_\varepsilon-\phi_D)\right)\mathrm{d}x
            =:I^\varepsilon_1+I^\varepsilon_2+I_3^\varepsilon.
        \end{split}
    \end{equation}
    Using \eqref{Sobebd_pn}, we infer that 
    \begin{equation}\label{I_eps_1}\begin{split}
        I_1^\varepsilon\leq& \frac{1}{2}\int_\Omega\left(p_\varepsilon+n_\varepsilon\right)|\nabla\phi_\varepsilon|^2\mathrm{d}x+\frac{1}{2}\int_\Omega\left(p_\varepsilon+n_\varepsilon\right)|\nabla\phi_D|^2\mathrm{d}x\\
         \leq &\frac{1}{2}\int_\Omega\left(p_\varepsilon+n_\varepsilon\right)|\nabla\phi_\varepsilon|^2\mathrm{d}x+\frac{1}{2}\|p_\varepsilon+n_\varepsilon\|_{L^\sigma(\Omega)}\left\||\nabla\phi_D|^2\right\|_{L^{\widetilde{\sigma}}(\Omega)}\\
        \leq&\frac{1}{2}\int_\Omega\left(p_\varepsilon+n_\varepsilon\right)|\nabla\phi_\varepsilon|^2\mathrm{d}x
        +\frac{c_1}{2}\left(1+\|\nabla(p_\varepsilon+n_\varepsilon)\|_{L^2(\Omega)}\right)\left\||\nabla\phi_D|^2\right\|_{L^{\widetilde{\sigma}}(\Omega)},
    \end{split}\end{equation}
    where $\left({1}/{\sigma}\right)+\left({1}/{\widetilde{\sigma}}\right)=1$ and $\sigma,~\widetilde{\sigma}\in (1, \infty)$.
    
To estimate $I_2^\varepsilon$, we use \eqref{E_approx_SS_wk_phi} with test function $\varphi=n_\varepsilon-p_\varepsilon-(n_D-p_D)$ and \eqref{elliptic_est_eps} and get 
\begin{equation}\label{I_eps_2}\begin{split}
    I_2^\varepsilon=&\frac{1}{\epsilon_\phi}\int_\Omega\left[\left(p_\varepsilon-n_\varepsilon\right)(n_\varepsilon-p_\varepsilon-(n_D-p_D))\right]\mathrm{d}x\\
    +&\int_\Omega\left[\nabla\phi_\varepsilon\cdot\nabla(n_D-p_D)\right]\mathrm{d}x-\int_\Omega\left(\nabla\phi_D\cdot\nabla(n_\varepsilon-p_\varepsilon)\right)\mathrm{d}x\\
    \leq&\left(1-\frac{1}{\epsilon_\phi}\right)\|n_\varepsilon-p_\varepsilon\|_{L^2(\Omega)}^2+\|\nabla(n_\varepsilon-p_\varepsilon)\|_{L^2(\Omega)}\|\nabla\phi_D\|_{L^2(\Omega)}+c_3,
\end{split}\end{equation}
    where $$c_3=\frac{1}{2}\int_\Omega|n_D-p_D|^2\mathrm{d}x+\frac{c_2^2}{2}\|\nabla(n_D-p_D)\|_{L^2(\Omega)}+c_2\|\nabla\phi_D\|_{L^2(\Omega)}\|\nabla(n_D-p_D)\|_{L^2(\Omega)}.$$ 
    Now we estimate $I_3^\varepsilon$ as follows: 
    \begin{equation}\label{I_eps_3}
        \begin{split}
            I_3^\varepsilon&\leq\max\left\{\frac{1}{\epsilon_p},\frac{1}{\epsilon_n}\right\}\int_\Omega\left(\left(p_\varepsilon+n_\varepsilon\right)|\mathbf{v}||\nabla\phi_\varepsilon|\right)\mathrm{d}x +\max\left\{\frac{1}{\epsilon_p},\frac{1}{\epsilon_n}\right\}\int_\Omega\left(\left(p_\varepsilon+n_\varepsilon\right)|\mathbf{v}|\nabla\phi_D\right)\mathrm{d}x\\
            &\leq \frac{1}{4}\int_\Omega\left((p_\varepsilon+n_\varepsilon)|\nabla\phi_\varepsilon|^2\right)\mathrm{d}x+\max\left\{\frac{1}{\epsilon_p^2},\frac{1}{\epsilon_n^2}\right\}\int_\Omega\left((p_\varepsilon+n_\varepsilon)|\mathbf{v}|^2\right)\mathrm{d}x\\
            &+\max\left\{\frac{1}{\epsilon_p},\frac{1}{\epsilon_n}\right\}\|p_\varepsilon+n_\varepsilon\|_{L^2(\Omega)}\|\mathbf{v}\|_{L^\infty(\Omega)}\|\nabla\phi_D\|_{L^2(\Omega)}\\
            &\leq \frac{1}{4}\int_\Omega\left((p_\varepsilon+n_\varepsilon)|\nabla\phi_\varepsilon|^2\right)\mathrm{d}x
            +c_4\left(\|\nabla p_\varepsilon\|_{L^2(\Omega)}+\|\nabla n_\varepsilon\|_{L^2(\Omega)}\right),
        \end{split}
    \end{equation}
 where \[\begin{split}c_4=\widetilde{c}_4(\|\mathbf{v}\|_{H^2(\Omega)}+\|\nabla\phi_D\|_{L^2(\Omega)})\|\mathbf{v}\|_{H^2(\Omega)},\quad 
\widetilde{c}_4= \max\left\{{c_0^3}/{\epsilon_p^2},~ {c_0^3}/{\epsilon_n^2},~{c_0^2}/{\epsilon_p},~ {c_0^2}/{\epsilon_n}\right\},\end{split}\] and we use continuous embedding \eqref{Sob_emb_H2}. 
Hence, with setting $\epsilon_\phi\in(0,1)$ from Assumption \ref{assumption}, \eqref{Int_est_eps_1}--\eqref{I_eps_3} imply
    \begin{equation}\label{aprio_est_eps}
        \begin{split}
            \int_\Omega\left((p_\varepsilon-n_\varepsilon)^2+(p_\varepsilon+n_\varepsilon)|\nabla\phi_\varepsilon|^2\right)\mathrm{d}x
            \leq c_5\left(\|\nabla p_\varepsilon\|_{L^2(\Omega)}+\|\nabla n_\varepsilon\|_{L^2(\Omega)}\right)+c_6.
        \end{split}
    \end{equation}
     Here 
     \[c_5=\left(\frac{c_1}{2}\left\||\nabla\phi_D|^2\right\|_{L^{\widetilde{\sigma}}(\Omega)}+\|\nabla\phi_D\|_{L^2(\Omega)}+c_4\right)\bigg/\min\left\{\frac{1}{4},\ \frac{1}{\epsilon_\phi}-1\right\},\]
     \[c_6=\frac{(c_1/2)\left\||\nabla\phi_D|^2\right\|_{L^{\widetilde{\sigma}}(\Omega)}+c_3}{\min\left\{({1}/{4}),\ ({1}/{\epsilon_\phi})-1\right\}}.\]

We insert $\varphi_p=(p_\varepsilon-p_D)/\epsilon_p$ into \eqref{E_approx_SS_wk_p} and $\varphi_n=(n_\varepsilon-n_D)/\epsilon_n$ into \eqref{E_approx_SS_wk_n}, and obtain
\begin{equation}\label{sum_of_J_k}\begin{split}
    &\int_\Omega\left(|\nabla p_\varepsilon|^2+|\nabla n_\varepsilon|^2\right)\mathrm{d}x
    \leq \int_\Omega\left(\nabla p_\varepsilon\cdot\nabla p_D+\nabla n_\varepsilon\cdot\nabla n_D\right)\mathrm{d}x\\
    &+\frac{1}{2}\int_\Omega\left(\nabla\phi_\varepsilon\cdot\nabla(n_\varepsilon^2-p_\varepsilon^2)\right|\mathrm{d}x
    +\int_\Omega|\nabla\phi_\varepsilon\cdot(p_\varepsilon\nabla p_D-n_\varepsilon\nabla n_D)|\mathrm{d}x\\
    &+\left(\frac{1}{2\epsilon_p}+\frac{1}{2\epsilon_n}\right)\int_\Omega\left|\mathbf{v}\cdot\nabla(p_\varepsilon^2+n_\varepsilon^2)\right|\mathrm{d}x
    +\int_\Omega\left|\mathbf{v}\cdot\left(\frac{p_\varepsilon\nabla p_D}{\epsilon_p}+\frac{n_\varepsilon\nabla n_D}{\epsilon_n}\right)\right|\mathrm{d}x\\
    &+\kappa_0\int_\Omega\left((p_\varepsilon+n_\varepsilon)|\nabla\phi_\varepsilon|(p_\varepsilon+n_\varepsilon+p_D+n_D)\right)\mathrm{d}x=:\sum_{k=1}^6 J_k, \end{split}
\end{equation}
where $\kappa_0=(\alpha_1+\eta_0)\epsilon_n\max\{1/\epsilon_p,~1/\epsilon_n\}$.

We estimate $J_k$, $k=1,\dots, 6$ as follows.  Clearly, 
\begin{equation}\label{J_1}
    \begin{split}
        J_1\leq \frac{1}{16}\int_\Omega\left(|\nabla p_\varepsilon|^2+|\nabla n_\varepsilon|^2\right)\mathrm{d}x+ 4\int_\Omega\left(|\nabla p_D|^2+|\nabla n_D|^2\right)\mathrm{d}x.
    \end{split}
\end{equation}
To estimate $J_2$, we use \eqref{E_approx_SS_wk_phi}, \eqref{elliptic_est_eps}, and obtain 
\begin{equation}\label{J_2}\begin{split}
    J_2&=\frac{1}{2}\left(\int_\Omega\frac{p_\varepsilon-n_\varepsilon}{\epsilon_\phi}\left(n_\varepsilon^2-p_\varepsilon^2-\left(n_D^2-p_D^2\right)\right)\mathrm{d}x+\int_\Omega\nabla\phi_\varepsilon\cdot\nabla(n_D^2-p_D^2)\mathrm{d}x\right)\\
    &\leq \frac{1}{2\epsilon_\phi}\int_\Omega(n_\varepsilon-p_\varepsilon)\left(n_D^2-p_D^2\right)\mathrm{d}x+\frac{1}{2}\int_\Omega\nabla\phi_\varepsilon\cdot\nabla\left(n_D^2-p_D^2\right)\mathrm{d}x\\
    &\leq \frac{1}{16}\int_\Omega\left(\left|\nabla p_\varepsilon\right|^2+\left|\nabla n_\varepsilon\right|^2\right)\mathrm{d}x+\frac{4c_0^2}{\epsilon_\phi^2}\left\|n_D^2-p_D^2\right\|_{L^2(\Omega)}^2 \\
    &+\left(4c_0^2c_2^2+\frac{c_2}{2}\|\nabla\phi_D\|_{L^2(\Omega)}\right)\left\|\nabla\left(n_D^2-p_D^2\right)\right\|_{L^2(\Omega)}.
\end{split}\end{equation}
Using \eqref{Sobebd_pn} gives
\begin{equation}\label{J_3}
    \begin{split}
        J_3\leq& \int_\Omega(p_\varepsilon+n_\varepsilon)|\nabla\phi_\varepsilon|(|\nabla p_D|+|\nabla n_D|)\mathrm{d}x\\
        \leq&\frac{1}{4}\left(\int_\Omega\left(p_\varepsilon+n_\varepsilon\right)^{s}\mathrm{d}x\right)^{1/s}\left(\int_\Omega(|\nabla p_D|+|\nabla n_D|)^{r}\mathrm{d}x\right)^{2/r}
        +\int_\Omega(p_\varepsilon+n_\varepsilon)\left|\nabla\phi_\varepsilon\right|^2\mathrm{d}x\\
        \leq&\frac{1}{16}\int_\Omega\left(|\nabla p_\varepsilon|^2+\left|\nabla n_\varepsilon\right|^2\right)\mathrm{d}x+\int_\Omega(p_\varepsilon+n_\varepsilon)\left|\nabla\phi_\varepsilon\right|^2\mathrm{d}x\\
        +&\frac{c_0^2}{2}\left(\int_\Omega(|\nabla p_D|+|\nabla n_D|)^{r}\mathrm{d}x\right)^{4/r}.
    \end{split}
\end{equation}
Here $s\in(0,1)$ and $r\in(0,2)$.

Then, by using \ref{con2} from Assumption \ref{assumption}, estimates \eqref{Sebd} and \eqref{Holder-Cauchy}, we obtain the estimations of $J_4$ and $J_5$:
\begin{equation}\label{J4J5}
    \begin{split}
        J_4\leq& \left(\frac{1}{\epsilon_p}+\frac{1}{\epsilon_n}\right)\||\mathbf{v}|\|_{L^6(\Omega)}\left(\|p_\varepsilon\|_{L^3(\Omega)}\|\nabla p_\varepsilon\|_{L^2(\Omega)}+\|n_\varepsilon\|_{L^3(\Omega)}\|\nabla n_\varepsilon\|_{L^2(\Omega)}\right)\\
        \leq&\left(\frac{c_0}{\epsilon_p}+\frac{c_0}{\epsilon_n}\right)\|\nabla|\mathbf{v}|\|_{L^2(\Omega)}\int_\Omega\left(\left|\nabla p_\varepsilon\right|^2+\left|\nabla n_\varepsilon\right|^2\right)\mathrm{d}x\\
        \leq&\frac{1}{8}\int_\Omega\left(\left|\nabla p_\varepsilon\right|^2+\left|\nabla n_\varepsilon\right|^2\right)\mathrm{d}x,\\
        J_5\leq&\left(\frac{1}{\epsilon_p}+\frac{1}{\epsilon_n}\right)\||\mathbf{v}|\|_{L^6(\Omega)}\left(\|p_\varepsilon\|_{L^2(\Omega)}\|\nabla p_D\|_{L^3(\Omega)}+\|n_\varepsilon\|_{L^2(\Omega)}\|\nabla n_D\|_{L^3(\Omega)}\right)\\
        \leq&\frac{1}{16}\int_\Omega\left(\left|\nabla p_\varepsilon\right|^2+\left|\nabla n_\varepsilon\right|^2\right)\mathrm{d}x\\
        +&\left(\frac{2c_0^2}{\epsilon_p}+\frac{2c_0^2}{\epsilon_n}\right)^2\|\nabla|\mathbf{v}|\|^2_{L^2(\Omega)}\left(\|\nabla p_D\|_{L^3(\Omega)}^2+\|\nabla n_D\|_{L^3(\Omega)}^2\right).
    \end{split}
\end{equation}
We remain to show the estimation of $J_6$. First, by using \eqref{Sobebd_pn} and Young's inequality, 
\begin{equation*}
    \begin{split}
        &\kappa_0\int_\Omega(p_\varepsilon+n_\varepsilon)^2\left|\nabla\phi_\varepsilon\right|\mathrm{d}x\\
        \leq&\kappa_0\left(\int_\Omega(p_\varepsilon+n_\varepsilon)\left|\nabla\phi_\varepsilon\right|^2\mathrm{d}x\right)^{1/2}\left(\int_\Omega(p_\varepsilon+n_\varepsilon)^3\mathrm{d}x\right)^{1/2}\\
        \leq&\frac{1}{16}\int_\Omega\left(\left|\nabla p_\varepsilon\right|^2+\left|\nabla n_\varepsilon\right|^2\right)\mathrm{d}x+c_7\left(1+\left(\int_\Omega(p_\varepsilon+n_\varepsilon)\left|\nabla\phi_\varepsilon\right|^2\mathrm{d}x\right)^2\right),
    \end{split}
\end{equation*}
where $$c_7=\max\left\{\frac12,~ \frac{9}{50}+\frac{5\sqrt{30}}{108}\kappa_0^3+\frac{c_1}{3}\left(\frac{2c_1}{3}+\frac{1}{2}\right)\right\},$$ 
and
\[\begin{split}
    &\kappa_0\int_\Omega(p_\varepsilon+n_\varepsilon)\left|\nabla\phi_\varepsilon\right|(p_D+n_D)\mathrm{d}x\\
    \leq&\frac{1}{16}\int_\Omega\left(\left|\nabla p_\varepsilon\right|^2+\left|\nabla n_\varepsilon\right|^2\right)\mathrm{d}x+\frac{1}{2}\left(\int_\Omega(p_\varepsilon+n_\varepsilon)\left|\nabla\phi_\varepsilon\right|^2\mathrm{d}x\right)^2+\frac{\kappa_0^4}{8}+8c_0^2\|p_D+n_D\|^4_{L^4(\Omega)},
\end{split}\]
thus we obtain 
\begin{equation}\label{J6}
    \begin{split}
        J_6\leq& \frac{1}{8}\int_\Omega\left(\left|\nabla p_\varepsilon\right|^2+\left|\nabla n_\varepsilon\right|^2\right)\mathrm{d}x+c_7\left(1+\left(\int_\Omega(p_\varepsilon+n_\varepsilon)\left|\nabla\phi_\varepsilon\right|^2\mathrm{d}x\right)^2\right)\\
        +&\frac{1}{2}\left(\int_\Omega(p_\varepsilon+n_\varepsilon)\left|\nabla\phi_\varepsilon\right|^2\mathrm{d}x\right)^2+\frac{\kappa_0^4}{8}+8c_0^2\|p_D+n_D\|^4_{L^4(\Omega)}.
    \end{split}
\end{equation}
Inserting \eqref{J_1}-\eqref{J6} into \eqref{sum_of_J_k} infers 
\begin{equation}\label{Int_est_gradpn}
    \begin{split}
        \int_\Omega\left(\left|\nabla p_\varepsilon\right|^2+\left|\nabla n_\varepsilon\right|^2\right)\mathrm{d}x&\leq 2M_1+2\left(c_7+1\right)\left(\int_\Omega(p_\varepsilon+n_\varepsilon)\left|\nabla\phi_\varepsilon\right|^2\mathrm{d}x\right)^2,
    \end{split}
\end{equation}
where
\[\begin{split}
    M_1&:=4\int_\Omega\left(|\nabla p_D|^2+|\nabla n_D|^2\right)\mathrm{d}x
        +\frac{4c_0^2}{\epsilon_\phi^2}\|n_D^2-p_D^2\|_{L^2(\Omega)}^2\\ 
    &+\left(4c_0^2c_2^2+\frac{c_2}{2}\|\nabla\phi_D\|_{L^2(\Omega)}\right)\left\|\nabla\left(n_D^2-p_D^2\right)\right\|_{L^2(\Omega)}\\
        &+\frac{c_0^2}{2}\left(\int_\Omega(|\nabla p_D|+|\nabla n_D|)^{r}\mathrm{d}x\right)^{4/r}+\frac{\kappa_0^4}{8}+8c_0^2\|p_D+n_D\|^4_{L^4(\Omega)}+c_7+\frac{1}{2}\\
        &+\left(\frac{2c_0^2}{\epsilon_p}+\frac{2c_0^2}{\epsilon_n}\right)^2\|\nabla|\mathbf{v}|\|_{L^2(\Omega)}^2\left(\|\nabla p_D\|^2_{L^3(\Omega)}+\|\nabla n_D\|^2_{L^3(\Omega)}\right).
\end{split}\]
We estimate the integral on the right hand side of \eqref{Int_est_gradpn} with the help of \eqref{aprio_est_eps} to obtain 
\[\begin{split}
    \int_\Omega\left(\left|\nabla p_\varepsilon\right|^2+\left|\nabla n_\varepsilon\right|^2\right)\mathrm{d}x&\leq  2M_1+2\left(c_7+1\right)\left(2c_5^2+1\right)\int_\Omega\left(\left|\nabla p_\varepsilon\right|^2+\left|\nabla n_\varepsilon\right|^2\right)\mathrm{d}x\\
    &+2c_6^2\left(c_7+1\right)\left(c_5^2+1\right).
\end{split}\]
Hence, from \ref{con3} of Assumption \ref{assumption}, there exists a $\tau_0>0$, such that if 
\[\|\nabla \phi_D\|_{L^{\widetilde{\sigma}}(\Omega)}\leq \tau_0, \quad \|\mathbf{v}\|_{H^2(\Omega)}\leq\tau_0,\]
such that 
\[2\left(c_7+1\right)\left(2c_5^2+1\right)<\frac{1}{2},\]
then it follows that  
\begin{equation}\label{E_aprioriest_pnphi}
\begin{split}
   \int_\Omega\left(\left|\nabla p_\varepsilon\right|^2+\left|\nabla n_\varepsilon\right|^2\right)\mathrm{d}x&\leq 4M_1+4c_6^2\left(c_7+1\right)\left(c_5^2+1\right), \\ 
   \int_\Omega\left((p_\varepsilon+n_\varepsilon)\left|\nabla\phi_\varepsilon\right|^2\right)\mathrm{d}x&\leq2M_1+2c_6^2\left(c_7+1\right)\left(c_5^2+1\right)+\frac{c_5^2}{2}+c_6. 
\end{split} \end{equation}
In the remainder of the proof, we shall prove that $p_\varepsilon$ and $n_\varepsilon$ are bounded from above by a constant which is independently of $\varepsilon$, i.e. estimate \eqref{aprio_est2}. We recall $k_0$ given by \eqref{k0} and define 
\[A_k=A_{k,\varepsilon}=\left\{x\in\Omega:~p_\varepsilon(x)>k\right\},\quad 
B_k=B_{k,\varepsilon}=\left\{x\in\Omega:~n_\varepsilon(x)>k\right\},\]
\[\omega(k)=\omega_\varepsilon(k)=\mathrm{meas}~A_k+\mathrm{meas}~B_k\]
for any $k\geq k_0$. Then the function $\omega$ is non-increasing on $[k_0, \infty)$.

By using \eqref{Sebd}, 
\begin{equation}\label{E_lowerbd}
    \begin{split}
        (h-k)\omega(h)\leq& (\mathrm{meas}~A_k)^{1/\sigma}\left(\int_\Omega\left((p_\varepsilon-k)^+\right)^{\widetilde{\sigma}}\mathrm{d}x\right)^{1/\widetilde{\sigma}}\\
        +&(\mathrm{meas}~B_k)^{1/\sigma}\left(\int_\Omega\left((n_\varepsilon-k)^+\right)^{\widetilde{\sigma}}\mathrm{d}x\right)^{1/\widetilde{\sigma}}\\
        \leq& c_0(\omega(k))^{1/\sigma}\left(\int_\Omega\left(\left|\nabla(p_\varepsilon-k)^+\right|^2+\left|\nabla(n_\varepsilon-k)^+\right|^2\right)\mathrm{d}x\right)^{1/2},
    \end{split}
\end{equation}
for all $h>k\geq k_0$, where $1<\sigma<\infty$ if $\dim\Omega=2$, $\sigma=6/5$ if $\dim\Omega=3$ ($\widetilde{\sigma}=\sigma/(\sigma-1)$).

On the other hand, inserting $\varphi_p=(p_\varepsilon-k)^+/\epsilon_p$ into \eqref{E_approx_SS_wk_p} and $\varphi_n=(n_\varepsilon-k)^+/\epsilon_n$ into \eqref{E_approx_SS_wk_n} ($k\geq k_0$) gives
\begin{equation}\label{E_upperbd}
    \begin{split}
        &\int_\Omega\left(\left|\nabla(p_\varepsilon-k)^+\right|^2+\left|\nabla(n_\varepsilon-k)^+\right|^2\right)\mathrm{d}x\\
        \leq& \int_\Omega\left(n_\varepsilon\nabla\phi_\varepsilon\cdot\nabla(n_\varepsilon-k)^+-p_\varepsilon\nabla\phi_\varepsilon\cdot\nabla(p_\varepsilon-k)^+\right)\mathrm{d}x\\
        +&\int_\Omega\left(\frac{p_\varepsilon}{\epsilon_p}\mathbf{v}\cdot\nabla(p_\varepsilon-k)^++\frac{n_\varepsilon}{\epsilon_n}\mathbf{v}\cdot\nabla(n_\varepsilon-k)^+\right)\mathrm{d}x\\
        +&(\alpha_1+\eta_0)\epsilon_n\int_\Omega\left(p_\varepsilon+n_\varepsilon\right)\left|\nabla\phi_\varepsilon\right|\left((p_\varepsilon-k)^++(n_\varepsilon-k)^+\right)\mathrm{d}x\\
        =:&L_1+L_2+L_3.
    \end{split}
\end{equation}
For $\dim\Omega=2$, we fix any $1<t<\min\{2r/(1+r),4/3\}$ ($r>2$), and then fix $q$ sufficiently large such that $0<t/(2-t/q-t)<r$. If $\dim\Omega=3$, one sets $q=3$. Hence, in both cases,
\[\frac{qt}{2q-t-qt}<r,\quad\text{with}\quad r>\dim\Omega.\]
Thus, by \eqref{E_approx_SS_wk_phi}, we have 
\[\begin{split}
    L_1&=\frac{1}{2}\int_{A_k}\left(\nabla\phi_\varepsilon\cdot\nabla n_\varepsilon^2\right)\mathrm{d}x-\frac{1}{2}\int_{B_k}\left(\nabla\phi_\varepsilon\cdot\nabla p_\varepsilon^2\right)\mathrm{d}x\\
    &=\frac{1}{2}\int_\Omega\left((p_\varepsilon-n_\varepsilon)\left[\left(n_\varepsilon^2-k^2\right)^+-\left(p_\varepsilon^2-k^2\right)^+\right]\right)\mathrm{d}x\\
    &\leq 0.
\end{split}\]
We recall that the velocity $\mathbf{v}$ satisfies 
\[\nabla\cdot\mathbf{v}=0\ \text{in}\ \Omega,\quad \mathbf{v}\cdot\nu=0\ \text{on}\ \partial\Omega,\]
so using integration by parts, we have 
\[\begin{split}
    L_2&= \frac{1}{2\epsilon_p}\int_{A_k}\left(\mathbf{v}\cdot\nabla p_\varepsilon^2\right)\mathrm{d}x+\frac{1}{2\epsilon_n}\int_{B_k}\left(\mathbf{v}\cdot\nabla n_\varepsilon^2\right)\mathrm{d}x\\
    &+\frac{k}{2\epsilon_p}\int_\Omega\left(\mathbf{v}\cdot\nabla(p_\varepsilon-k)^+\right)\mathrm{d}x+\frac{k}{2\epsilon_n}\int_\Omega\left(\mathbf{v}\cdot\nabla(n_\varepsilon-k)^+\right)\mathrm{d}x\\
    &\leq 0.
\end{split}\]
Next, we have
\[\begin{split}
    L_3&\leq \frac14\int_\Omega\left(\left|\nabla(p_\varepsilon-k)^+\right|^2+\left|\nabla(n_\varepsilon-k)^+\right|^2\right)\mathrm{d}x\\
    &+{2c_0^2\kappa_0^2}\int_\Omega\left(p_\varepsilon+n_\varepsilon\right)\left|\nabla\phi_\varepsilon\right|^2\mathrm{d}x\left(\int_\Omega\left(p_\varepsilon+n_\varepsilon\right)^6\mathrm{d}x\right)^{1/6}\left(\omega(k)\right)^{1/2}.
\end{split}\]
Hence, \eqref{E_upperbd} becomes 
\begin{equation}\label{E_upperbd_2}
    \begin{split}&\int_\Omega\left(\left|\nabla(p_\varepsilon-k)^+\right|^2+\left|\nabla(n_\varepsilon-k)^+\right|^2\right)\mathrm{d}x\\
        \leq&{2c_0^2\kappa_0^2}\int_\Omega\left(p_\varepsilon+n_\varepsilon\right)\left|\nabla\phi_\varepsilon\right|^2\mathrm{d}x\left(\int_\Omega\left(p_\varepsilon+n_\varepsilon\right)^6\mathrm{d}x\right)^{1/6}\left(\omega(k)\right)^{1/2}\\
        \leq& {4c_0^2\kappa_0^2}\left(2c_5^2\left(M_1+c_6^2\right)+c_6\right)\left(M_1+c_6^2+c_0^2\right)\left(\omega(k)\right)^{1/2}\end{split}
\end{equation}
 \eqref{E_aprioriest_pnphi}, \eqref{E_lowerbd} and \eqref{E_upperbd_2} infer that
\[(h-k)\omega(h)\leq {2c_0^2\kappa_0}\sqrt{\left(2c_5^2\left(M_1+c_6^2\right)+c_6\right)\left(M_1+c_6^2+c_0^2\right)}(\omega(k))^{1/\sigma+1/4}\]
for all $h>k\geq k_0$. 

As $1/t+1/4>1$, then an iteration argument (Lemma 4.1 from \cite{stampacchia1965probleme}) infers that $\omega(k_0+M_2)=0$, where $M_2=c(\omega(k_0))^{1/\sigma-3/4}2^{(1/\sigma+1/4)/(1/\sigma-3/4)}$ with constant $c>0$, i.e.
\begin{equation}\label{E_upperbd_pn}
    p_\varepsilon\leq k_0+M_2,\quad n_\varepsilon\leq k_0+M_2,\quad\text{a.e. in}~\Omega,~\forall~\varepsilon>0.
\end{equation}
Setting 
\[C=\max\left\{M_2,~4M_1+4c_6^2, ~ 2c_5^2\left(M_1+c_6^2\right)+c_6\right\}\]
\eqref{E_aprioriest_pnphi} and \eqref{E_upperbd_pn} imply \eqref{aprio_est}.

This concludes the proof of Theorem \ref{apriori_est_thm}.
\end{proof}				
\subsection{Passage to the Limit}\label{pass2limit}
	By passing to a subsequence, from \eqref{aprio_est}, we conclude that \[\begin{split}
    p_\varepsilon\to p,~ n_\varepsilon\to n,~\phi_\varepsilon\to\phi,~\text{weakly in}~ H^1(\Omega),\\ \text{strongly in}~L^2(\Omega),
    \text{and a. e. in}~\Omega,~\text{as}~\varepsilon\to 0.\end{split}\]		
    In particular, the bounds on $p_\varepsilon$ and $n_\varepsilon$ in \eqref{aprio_est2} continue to hold after letting $\varepsilon\to 0$ therein, i.e., \eqref{SS_pos}. Analogously, \eqref{E_approx_SS_bdy} implies \eqref{SS_bdy}.			
From \eqref{E_approx_SS_wk_phi}, we infer that $\nabla\phi_\varepsilon\to\nabla\phi$ strongly in $L^2(\Omega)$ as $\varepsilon\to 0$. Moreover, \eqref{E_approx_SS_wk_p} and \eqref{E_approx_SS_wk_n} imply $\nabla p_\varepsilon\to\nabla p$ and $\nabla n_\varepsilon\to\nabla n$ strongly in $L^2(\Omega)$ as $\varepsilon\to 0$. Then the passage to limit $\varepsilon\to0$ in \eqref{E_approx_SS_wk_phi}-\eqref{E_approx_SS_wk_n} leads to \eqref{SS_wk_phi}-\eqref{SS_wk_n}.

This concludes the proof of Theorem \ref{SS_Existence}.

\section{Numerical Simulations} \label{numerical}
This section presents a comprehensive numerical assessment of the shifted logarithmic SUPG--FEM solver developed in this work. We evaluate its accuracy, robustness, and geometric versatility through a systematic suite of numerical experiments, organized according to the dimensionality of the computational domain: two‑dimensional planar problems and three‑dimensional axisymmetric problems. The verification campaign includes manufactured solution convergence tests, parameter stress tests, and comparisons with analytical solutions, thereby validating both the theoretical existence result Theorem \ref{SS_Existence} and the practical performance of the solver across a wide range of operating regimes.

Section \ref{sec3.1two} assesses the solver on two‑dimensional domains: a unit square for convergence verification, an L‑shaped corner for singularity handling, and an annular corona geometry for curved boundaries and field concentration. Section \ref{sec3.2three} extends the method to axisymmetric problems, validating it on a three‑dimensional domed cylinder that mimics a MEMS capacitive switch, thereby demonstrating the solver’s capability for realistic device geometries.

\subsection{Two Dimensional Tests} \label{sec3.1two}
Three representative two dimensional configurations are examined: a manufactured solution on the unit square to verify asymptotic convergence rates; an L shaped domain with a re entrant corner; and an annular domain that mimics the classical coaxial corona discharge geometry. The latter two cases are chosen because their geometric features are ubiquitous in realistic MEMS structures and discharge devices.\\

\noindent\textbf{Unit Square}

 This test purpose is to verify that the finite element assembly, the exponential nonlinearities, and the SUPG stabilisation are implemented without algebraic or consistency errors. In practical MEMS simulations, such code verification is a prerequisite before tackling complex geometries and nonlinear discharge phenomena.

The method of manufactured solutions (MMS) \cite{Roache1998} is employed on the unit square $\Omega = (0,1)^2$ to isolate the discretisation error of the piecewise-linear ($P_1$) Galerkin approximation. A smooth potential $\phi(x,y) = \sin(\pi x) \sin(\pi y)$ is prescribed, and the source terms together with the Dirichlet data are constructed analytically so that the shifted log formalism reproduces this field exactly. This purely numerical test has no direct physical analogue; its purpose is to verify that the finite element assembly, the exponential nonlinearities, and the SUPG stabilisation are implemented correctly. Table~\ref{table1} reports the $L^2$ and $H^1$ seminorm errors and the estimated order of convergence (EOC) on a sequence of uniformly refined unstructured triangular meshes.

\begin{longtblr}[
  caption = {Convergence of the shifted log SUPG FEM on a manufactured solution \label{table1}},
]{
  width = \linewidth,
  colspec = {Q[44]Q[98]Q[210]Q[181]Q[210]Q[181]},
  cells = {c},
  cell{2}{4} = {font=\bfseries},
  cell{2}{6} = {font=\bfseries},
  hline{1,6} = {-}{0.08em},
  hline{2} = {-}{},
}
N  & h      & $L^2 error$ & $L^2 EOC$ & $H^1 error$ & $H^1 EOC$ \\
8  & 0.125  & 5.91e-2     & –         & 3.71e-1     & –         \\
16 & 0.0625 & 1.32e-2     & 2.16      & 1.54e-1     & 1.27      \\
32 & 0.0312 & 3.02e-3     & 2.12      & 7.36e-2     & 1.07      \\
64 & 0.0156 & 7.19e-4     & 2.07      & 3.64e-2     & 1.01

\end{longtblr}
The observed rates approach the theoretical values of 2.0 ($L^2$) and 1.0 ($H^1$) for continuous $P_1$ elements as the mesh is refined, confirming that both the Poisson sub-solver and the shifted log transformations are implemented without algebraic or consistency errors. The slight deviation from the asymptotic rates on the coarser meshes is expected and vanishes as $h \to 0$.\\

\noindent\textbf{L shaped Domain}

The L-shaped region $[0,2] \times [0,1] \cup [0,1] \times [1,2]$ contains a re-entrant corner at $(1,1)$ with an interior angle of $270^\circ$. Such corners are common in etched cavities, comb drive anchors, and step coverage features of MEMS devices \cite{Senturia2001}. In these locations, the electric field becomes singular, leading to locally enhanced charge injection and dielectric charging — a primary reliability concern for capacitive RF MEMS switches. Accurate simulation of field enhancement is therefore essential for predicting device lifetime.

In electrostatics, the electric field near a re-entrant corner behaves as $|\nabla\phi| \propto r^{\gamma-1}$, where $r$ is the distance from the corner and $\gamma \approx 0.544$ for the L-shape \cite{Grisvard1992}. This singularity causes strong local field enhancement, which in dielectric charging phenomena can lead to locally amplified charge injection \cite{Molinero2006}. The shifted log solver converges on this domain in 19 outer Gummel iterations and yields strictly positive carrier densities with $\min(p) = \min(n) = p_{\min}$ to machine precision. Figure~\ref{fig:lshape}  displays contour plots of $\phi$, $p$, and $n$; the solution is free of spurious oscillations, and the electric field correctly captures the corner singularity without any stabilisation failure. The positivity of the carrier densities is maintained even in the immediate vicinity of the singularity, confirming that the algebraic positivity mechanism is robust with respect to geometric irregularities.\\
\begin{figure}[htbp]
  \centering
  \includegraphics[width=\textwidth]{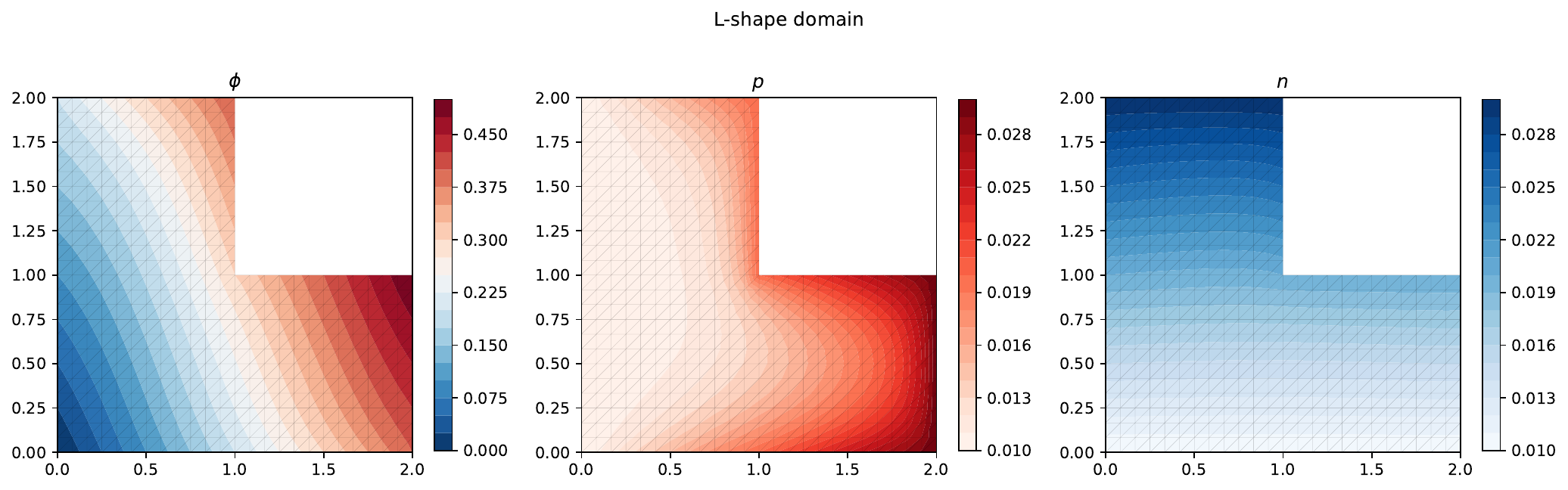}
  \caption{Contour plots of the electric potential $\phi$, positive ion 
           density $p$ and electron density $n$ on the L-shaped 
           domain}
  \label{fig:lshape}
\end{figure}

\noindent\textbf{Annular Domain (Corona Geometry)}

The annulus $r \in [1,5]$, with inner radius $r_{\mathrm{in}} = 1$ and outer radius $r_{\mathrm{out}} = 5$, reproduces the coaxial electrode arrangement analysed by Budd \cite{Budd1991} for positive corona discharges. This geometry is fundamental to many industrial applications: high‑voltage power transmission lines (where corona causes energy loss and electromagnetic interference), electrostatic precipitators (where corona charges dust particles for collection), and corona ignition systems for internal combustion engines.

In such devices, a thin wire (inner electrode) is held at high potential relative to a grounded outer cylinder, producing a strongly non-uniform electric field. Even in the space-charge-free limit, the Laplace solution yields the logarithmic profile $\phi(r) \propto \log(r_{\mathrm{out}} / r) / \log(r_{\mathrm{out}} / r_{\mathrm{in}})$, leading to a radial field gradient $|\nabla\phi| \propto 1/r$. This field concentration near the inner electrode localises the ionisation activity and is responsible for the characteristic corona structure \cite{Raizer1991}. 

The annulus therefore tests the ability of the linear triangular mesh to resolve curved Dirichlet boundaries and strongly inhomogeneous fields. The solver converges in 22 outer Gummel iterations. As shown in Figure~\ref{fig:annulus}, the computed radial potential agrees with the analytic Laplace solution within the resolution of the mesh, and the carrier densities remain strictly positive. Ionisation is correctly confined to the inner region where the field is strongest, consistent with the physical expectation that electron avalanches initiate near the wire. Collectively, the L-shape and annulus results demonstrate that the solver handles both non-convex polygonal and curved domains with identical algorithmic settings, providing oscillation-free solutions and exact algebraic positivity.
\begin{figure}[htbp]
  \centering
  \includegraphics[width=\textwidth]{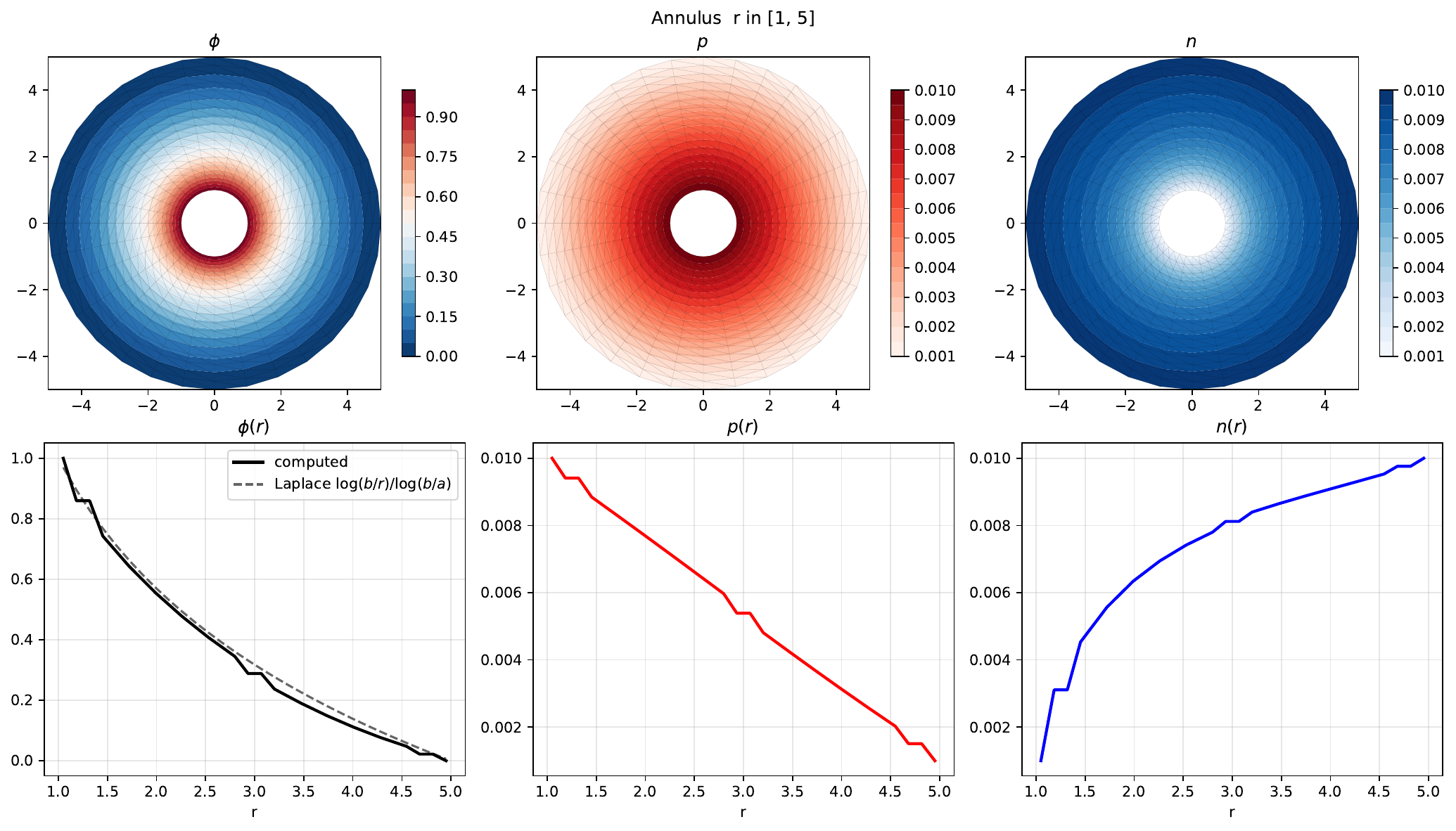}
  \caption{Solutions on the annular domain $r\in[1,5]$. 
           Top row: contour plots of $\phi$, $p$ and $n$. 
           Bottom row: radial profiles showing $\phi(r)$ 
           compared with the analytic Laplace solution 
           $\phi(r) \propto \log(r_{\mathrm{out}} / r) / \log(r_{\mathrm{out}} / r_{\mathrm{in}})$}
  \label{fig:annulus}
\end{figure}

\subsection{Axisymmetric Corona Test} \label{sec3.2three}

Many discharge configurations of practical relevance, such as needle‑to‑plane gaps, coaxial cylinders, and curved‑electrode coronas, possessing rotational symmetry. Exploiting axisymmetry reduces a full 3D simulation to a 2D problem on the meridian section, drastically cutting computational cost while preserving essential physical features. This makes axisymmetric modelling indispensable for parametric studies and iterative design of MEMS devices.

To extend the solver to these geometries without incurring the computational cost of a full three-dimensional mesh, we exploit axisymmetry. Under the assumption that all fields depend only on the radial coordinate $r = \sqrt{x_1^2 + x_2^2}$ and the axial coordinate $y$, the three-dimensional weak form reduces, after angular integration, to a two-dimensional problem on the meridian section $\Omega_m$:
\begin{equation}
\int_{\Omega_m} (\mathcal{F}_{rd} \cdot \nabla_{rd} w) \, r \, dr \, dy = \int_{\Omega_m} S \, w \, r \, dr \, dy,
\end{equation}
where $\nabla_{rd} = (\partial_r, \partial_y)$. The sole algorithmic modification with respect to the planar solver is the multiplication of all element integrals by the cylindrical Jacobian $r$. The symmetry axis $r=0$ is handled automatically as a natural Neumann boundary \cite{Zienkiewicz2013}. All other features---shifted log positivity, consistent SUPG stabilisation, Gummel--Newton iteration, and vectorised assembly---are inherited directly from the two-dimensional solver.

As a demanding verification, we consider the three-dimensional domain
\begin{equation}
\Omega = \{(x_1, x_2, y) : 0 \le y \le (x_1^2 + x_2^2)^{2/3} + 2, \; x_1^2 + x_2^2 \le 25\},
\end{equation}
whose boundary consists of a curved upper dome $A$: $y = r^{4/3} + 2$ ($r \le 5$), a flat circular floor $B$: $y = 0$ ($r \le 5$), and a cylindrical side wall $C$: $r = 5$ ($0 < y < 10.55$). This geometry is inspired by a point-to-plane corona with an energised curved electrode \cite{Budd1991, Morrow1997}. The dome is held at a fixed potential $V_0$ and supplies ions ($p_D = 10^{-2}$). The floor and the side wall are grounded ($\phi = 0$) and carry a weak background carrier reservoir ($p_D = n_D = 10^{-3}$). No external advection is present ($\mathbf{v} = \mathbf{0}$). The physical parameters are $\epsilon_\phi = \epsilon_p = \epsilon_n = 0.05$, $\alpha_1 = 1.5$, $\alpha_2 = 0.4$, $\eta_0 = 0.1$, with $p_{\min} = n_{\min} = 10^{-5}$ and $\delta = 10^{-5}$.

Figure~\ref{fig:domsol} shows the converged solution at $V_0 = 1.0$ in the meridian section. The potential decays smoothly from the dome toward the floor and side wall. The positive ion density is sharply concentrated near the dome, where ions are injected, and decreases monotonically into the bulk. The electron density develops a weak interior maximum in a region of elevated electric field, where impact ionisation is most active. 

Figure~\ref{fig:domfie} displays the electric field magnitude; the strongest field occurs at the rim corner $(r=5, y\approx 10.55)$, where the energised dome meets the grounded cylinder---a re-entrant edge with a Dirichlet discontinuity.
\begin{figure}[htbp]
  \centering
  \includegraphics[width=\textwidth]{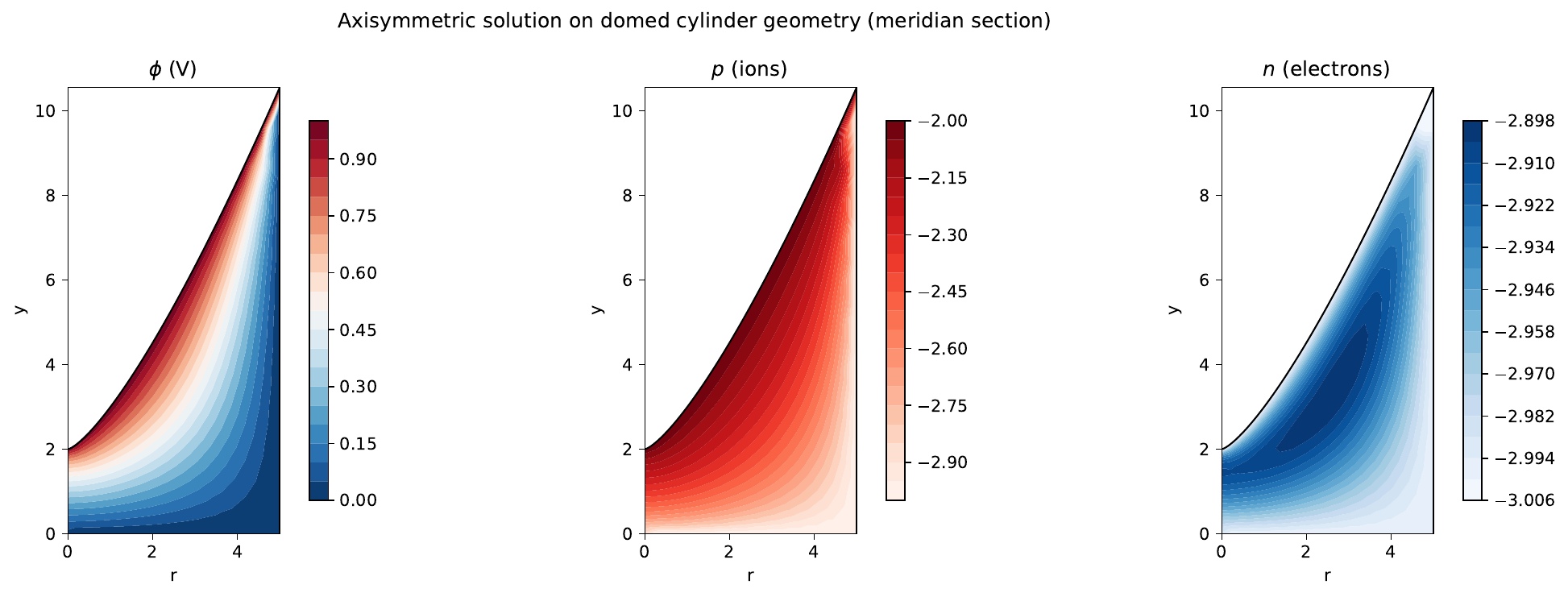}
  \caption{Converged solution at $V_0 = 1$ in the meridian section: $\phi$ (left), $\log_{10} p$ (middle), $\log_{10} n$ (right). Black lines show the boundary of the domain}
  \label{fig:domsol}
\end{figure}
\begin{figure}
  \centering
  \includegraphics[scale=0.35]{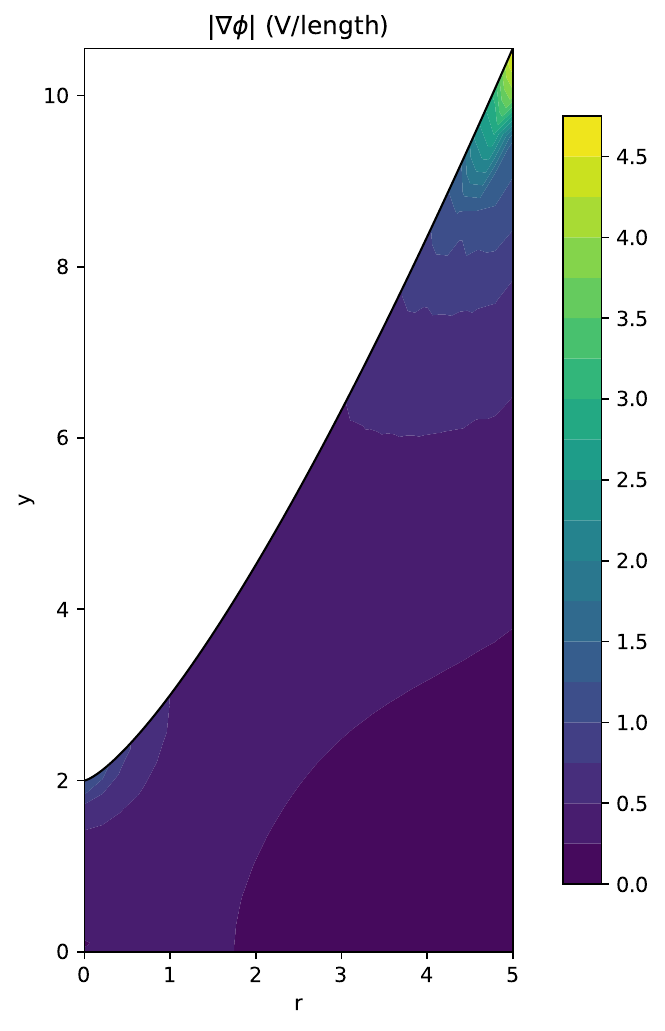}
  \caption{$|\nabla \phi|$ in the meridian section. Maximum is at the rim corner $(r=5, y=10.55)$ due to the Dirichlet jump between $A$ and $C$}
  \label{fig:domfie}
\end{figure}

A secondary, weaker field channel connects the dome apex to the grounded floor along the symmetry axis, consistent with the expected Laplacian field structure. A three-dimensional rendering of the solution for $V_0=1.5$, obtained by revolving the meridian section, is provided in Figure~\ref{fig:3d}.

The rim where the dome meets the cylinder constitutes a re‑entrant edge with a Dirichlet discontinuity, analogous to the corner singularity in the L‑shaped domain but in three dimensions. As expected from elliptic theory, the electric field exhibits a logarithmic singularity at this edge. The solver stably captures this feature, demonstrating its ability to handle practical 3D geometries with complex boundary conditions.
\begin{figure}[htbp]
  \centering
  \includegraphics[width=\textwidth]{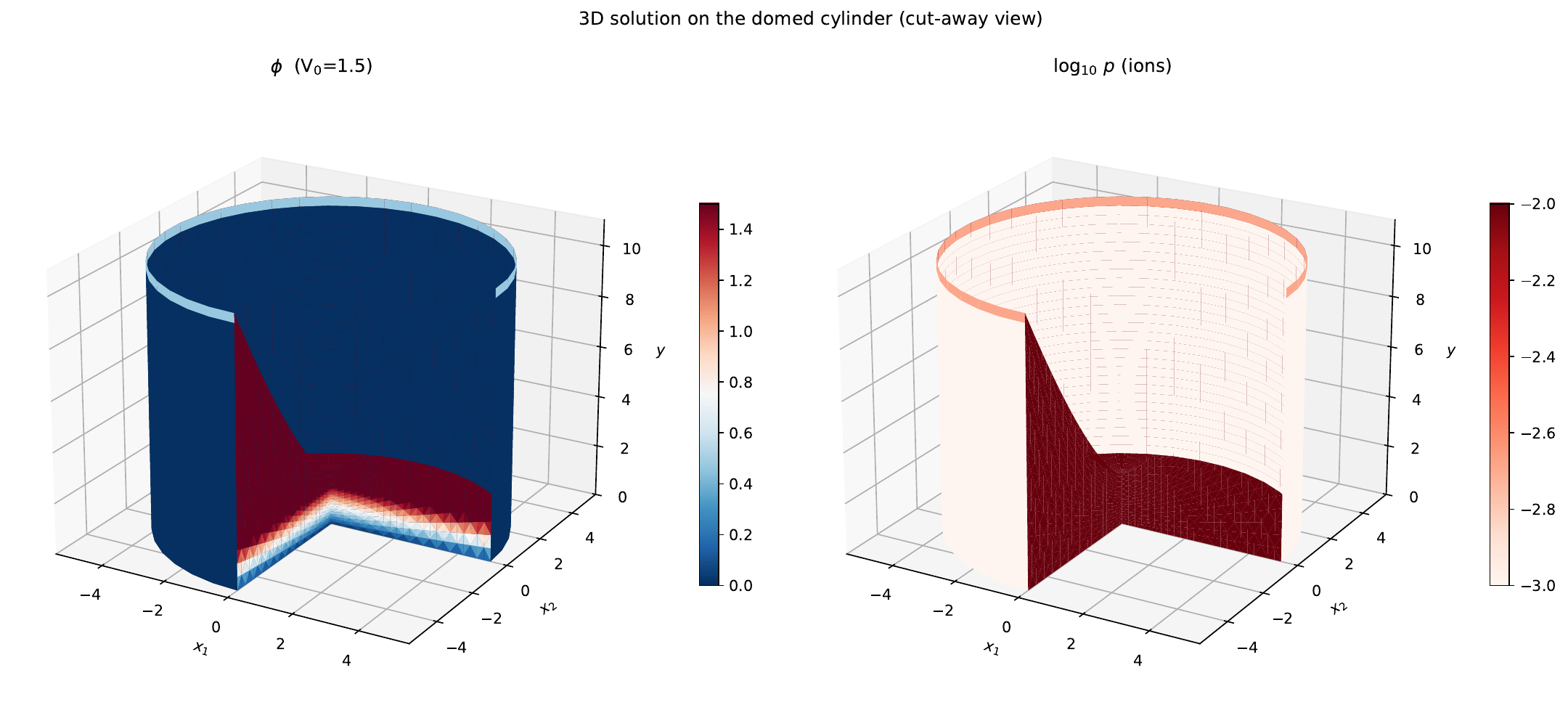}
  \caption{Three-dimensional rendering of the converged solution at $V_0 = 1.5$ by revolution of the meridian section. Left: electric potential $\phi$. Right: logarithm of positive ion density $\log_{10} p$}
  \label{fig:3d}
\end{figure}

To examine the response of the discharge to the applied voltage, we sweep $V_0 \in \{0.5, 1.0, 1.5, 2.0\}$ on a fixed meridian mesh with $N_r = 24$, $N_y = 20$ (525 nodes, 960 triangles). Profiles along the symmetry axis are shown in Figure~\ref{fig:domsw}. Quantitative results are summarised in Table~\ref{table2}. 

The number of outer Gummel iterations remains essentially constant (28--29), demonstrating that the nonlinear solver is robust with respect to voltage variation. The maximum positive ion density stays pinned at the Dirichlet value $10^{-2}$; the maximum electron density and the peak field magnitude both grow approximately linearly with $V_0$. The linear scaling of the field indicates that space charge feedback is weak under the chosen parameters, i.e., the system operates in a Laplace-dominated regime. This behaviour is physically plausible for a low ionisation scenario, where the carrier densities are insufficient to significantly perturb the applied Laplacian field \cite{Raizer1991}.

From a design perspective, such linear response allows straightforward extrapolation to higher voltages until space‑charge effects eventually become significant, at which point the device would transition toward a saturation or breakdown regime.

\begin{figure}[htbp]
  \centering
  \includegraphics[scale=0.46]{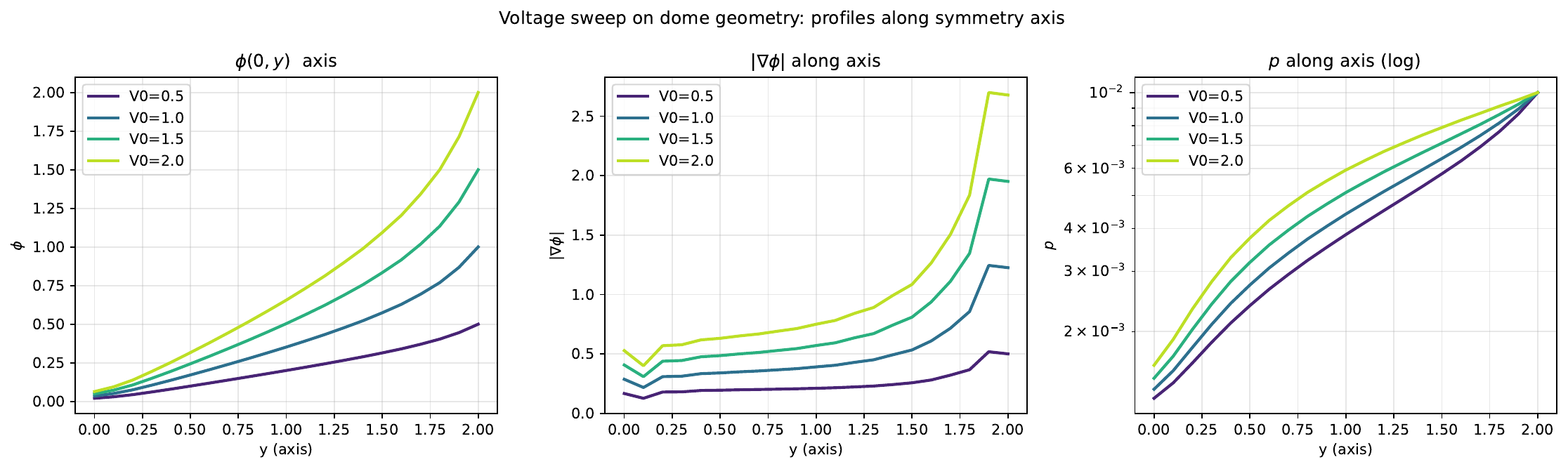}
  \caption{Voltage sweep $V_0 \in \{0.5, 1.0, 1.5, 2.0\}$. Left: electric potential $\phi$ along the symmetry axis. Middle: electric field magnitude $|\nabla \phi|$ along the axis. Right: ion density $p$ along the axis}
  \label{fig:domsw}
\end{figure}

\begin{longtblr}[
  caption = {Voltage sweep on the domed cylinder problem \label{table2}},
]{
  width = \linewidth,
  colspec = {Q[92]Q[92]Q[252]Q[260]Q[240]},
  cells = {c},
  hline{1,6} = {-}{0.08em},
  hline{2} = {-}{},
}
$V_0$ & Iterations & $max ~p$               & $max ~n$                & $\max \|\nabla \phi\|$ \\
0.5   & 29         & $1.00 \times 10^{-2}$ & $1.101 \times 10^{-3}$ & 2.329                  \\
1.0   & 28         & $1.00 \times 10^{-2}$ & $1.262 \times 10^{-3}$ & 4.659                  \\
1.5   & 28         & $1.00 \times 10^{-2}$ & $1.585 \times 10^{-3}$ & 6.988                  \\
2.0   & 28         & $1.00 \times 10^{-2}$ & $2.126 \times 10^{-3}$ & 9.318                  
\end{longtblr}

\ 

To assess spatial convergence, we evaluate the total integrated ion charge $I_p = \int_{\Omega} p \, dV = 2\pi \int_{\Omega_m} p \, r \, dr \, dy$ at $V_0 = 0.5$ on four successively refined meshes. The results are listed in Table \ref{table3}. The integrated charge converges to four significant figures, and the relative error approximately halves with each refinement. This rate is consistent with an $O(h)$ convergence for an $L^1$-type integrated quantity in the presence of a corner singularity \cite{Grisvard1992}. The maximum electron density remains stable to four significant digits across all meshes, and the number of outer iterations is exactly 29, confirming mesh independent convergence of the Gummel--Newton scheme.

\ 

\begin{longtblr}[
  caption = {Mesh refinement study for the domed cylinder at $V_0 = 0.5$ \label{table3}},
]{
  width = \linewidth,
  colspec = {Q[163]Q[90]Q[90]Q[240]Q[117]Q[233]},
  column{odd} = {c},
  column{2} = {c},
  column{4} = {c},
  hline{1,6} = {-}{0.08em},
  hline{2} = {-}{},
}
$N_r \times N_y$ & Nodes  & Elements & $max ~n$                 & $I_p$     & Relative error         \\
$12\times 10$    & $143$  & $240$    & $1.1010 \times 10^{-3}$ & $2.13476$ & $1.696 \times 10^{-3}$ \\
$16\times 14$    & $255$  & $448$    & $1.1009 \times 10^{-3}$ & $2.13350$ & $1.103 \times 10^{-3}$ \\
$24\times 20$    & $525$  & $960$    & $1.1006 \times 10^{-3}$ & $2.13222$ & $5.007 \times 10^{-4}$ \\
$36\times 30$    & $1147$ & $2160$   & $1.1006 \times 10^{-3}$ & $2.13115$ & 1.1006e-03           
\end{longtblr}

This study is particularly important for practical MEMS simulations: although local field values at geometric singularities may not converge pointwise, integral quantities such as total dielectric charge — which directly correlate with device degradation and lifetime — converge reliably. The solver thus provides a trustworthy tool for quantitative reliability assessment.

The rim where the dome meets the cylinder constitutes a re-entrant edge with a Dirichlet discontinuity. As is well known from elliptic theory, the exact electric field possesses a logarithmic singularity at this corner \cite{Grisvard1992}. Numerically, the maximum field magnitude grows upon mesh refinement as the singularity is better resolved, whereas global quantities such as the total charge remain finite and converge. This behaviour is a genuine geometric feature, not a numerical artefact, and it does not compromise the overall fidelity of the simulation. In engineering practice, the singularity can be regularised by rounding the corner or by imposing a smooth transition in the Dirichlet data; the present results confirm that the solver handles the unmodified geometry in a stable manner.

The axisymmetric extension preserves all the desirable properties of the planar solver: strict positivity, robust convergence under parameter variation, and mesh independent iteration counts. It furthermore demonstrates that the shifted log SUPG–FEM framework can efficiently simulate fully three dimensional discharge structures with rotational symmetry, providing a solid foundation for future extensions to general 3-D geometries and transient phenomena.

\appendix
\counterwithin*{equation}{section}
\renewcommand\theequation{\thesection\arabic{equation}}
\section{Gummel‑Newton Solver} \label{app2}
In this appendix, we present a comprehensive numerical validation of the theoretical existence result stated in Theorem \ref{SS_Existence} and illustrate the performance of a robust finite element solver for the stationary drift‑diffusion model \eqref{S_EPSys}. We adopt the shifted logarithmic transformation of the carrier densities to enforce strict positivity by construction and employ a consistent SUPG stabilization to handle convection‑dominated regimes. All computations are performed on unstructured triangular meshes using piecewise linear ($P_1$) continuous finite elements.
 
\subsection{Shifted‑Log Transformation and FEM Discretization} \label{sec3.1}
To guarantee strict positivity of the carrier densities throughout the nonlinear solution process, a property that is essential for physical relevance and in accordance with the a priori bounds (e.g. \eqref{aprio_est}).
We introduce the auxiliary variables
\begin{equation}
P = \log(p - p_{\min}), \quad N = \log(n - n_{\min}),
\end{equation}
with small positive floors $p_{\min} = n_{\min} = 10^{-6}$.
The electric potential $\phi$ is retained in its primitive form. Substituting
\begin{equation}
    p = p_{\min} + e^P, \quad n = n_{\min} + e^N,
\end{equation} 
into the governing equations \eqref{S_EPSys} yields the transformed system
\begin{subequations}
\begin{equation}
		-\Delta \phi=\frac{(p_{\min} + e^P)-(n_{\min} + e^N)}{\epsilon_\phi},\quad x\in\Omega,
	\end{equation}
	\begin{equation}
		-\nabla\cdot \left(\epsilon_p e^P \nabla P + (p_{\min} + e^P)\left(\epsilon_p\nabla \phi-\mathbf{v}\right)\right)=\widetilde{F}_{\delta} (N, \phi),\quad x\in\Omega,
	\end{equation}
	\begin{equation}
		-\nabla\cdot \left(\epsilon_n e^N \nabla N-(n_{\min} + e^N)\left(\epsilon_n\nabla \phi+\mathbf{v}\right)\right)=\widetilde{F}_{\delta} (N, \phi),\quad x\in\Omega,
	\end{equation}
\end{subequations}
where the regularised ionisation source is
\begin{equation}
    \widetilde{F}_{\delta} (N, \phi) = \epsilon_n (n_{\min} + e^N) E_{\delta}(\phi) (\alpha_1 e^{-\frac{\alpha_2}{E_{\delta}(\phi)}}-\eta_0)
\end{equation}
with $E_{\delta}(\phi) = \sqrt{|\nabla \phi|^2+\delta^2}$ and a regularisation parameter $\delta = 10^{-6}$ in the numerical experiments.
The transformed transport equations are nonlinear in P and N but enjoy locally Lipschitz continuous nonlinearities, making them amenable to Newton’s method.

The weak formulation is discretised using standard $P_1$ Galerkin finite elements on a triangulation of the domain $\Omega$. For the Poisson equation the Galerkin approximation is used without modification. For the carrier transport equations, the standard Galerkin method is augmented with a consistent SUPG stabilisation to suppress spurious oscillations in convection dominated regimes. Specifically, for the ion transport equation, the test function $\psi_h$ is replaced by
\begin{equation}
    \psi_h \rightarrow \psi_h + \tau_k (w_p \cdot \nabla \psi_h)
\end{equation}
where $w_p = \epsilon_p \nabla \phi - \mathbf{v}$ is the advective velocity for ions, $h_k$ denotes the element length of triangle $K$ in the direction of
$w_p$ and $\tau_k$ is an element wise stabilisation parameter defined by
\begin{equation}
    \tau_k = \frac{h_k}{2|w_p|} \min \left\{\frac{Pe_k}{3}, 1\right\}, \quad Pe_k = \frac{|w_p| h_k}{2 \epsilon_p}
    \label{tau_def}
\end{equation}
An analogous definition holds for the electron equation with velocity $w_n = -(\epsilon_n \nabla \phi + \mathbf{v})$. The SUPG terms are incorporated consistently into both the residual vector and the Jacobian matrix to ensure that the fixed point of the Newton iteration corresponds to the stabilised weak solution.

\subsection{Gummel‑Newton Iteration} 

The discrete nonlinear system arising from the finite element discretisation of \ref{sec3.1} is solved by a decoupled Gummel‑type outer iteration with damped Newton inner solves. This strategy exploits the natural asymmetry of the coupling: the Poisson equation determines $\phi$ given the space charge, while the transport equations for $N$ and $P$ are driven primarily by the electric field. Decoupling reduces the size of the linear systems to be solved at each nonlinear step and allows tailored preconditioning if iterative linear solvers are employed. 

Let $\left(\phi^{k}, P^{k}, N^{k}\right)$ denote the approximation at the beginning of the k-th outer iteration. One Gummel cycle consists of the three sequential steps described below, which produce intermediate (undamped) quantities denoted by a tilde: $\tilde{\phi}$, $\tilde{P}$, $\tilde{N}$.

\subsubsection{Poisson Step}
With $\mathbf{P}=\mathbf{P}^{k}$ and $\mathbf{N}=\mathbf{N}^{k}$ frozen, the Poisson equation becomes linear in $\phi$. Its $P_{1}$ finite element discretisation on the triangulation $\mathcal{T}_{h}$leads to the linear algebraic system
\begin{equation}
    \mathbf{A}_{\phi} \tilde{\bm{\phi}} = \mathbf{f}_{\phi}
\label{poisson}
\end{equation}
where $\mathbf{A}_{\phi}\in\mathbb{R}^{N_n\times N_n}$ is the standard stiffness matrix and $\mathbf{f}_{\phi}\in\mathbb{R}^{N_n}$ is the right-hand-side vector. The assembly is performed element by element. On a triangle $K$ with vertices $\{\bm{x}_{a}\}_{a=1}^{3}$, the local stiffness matrix $\bm{A}_{K}^{\phi}\in\mathbb{R}^{3\times3}$ has entries. Let $\lambda_a^K$, $a=1,2,3$, denote the piecewise linear nodal basis 
functions (barycentric coordinates) on triangle $K$. Their gradients 
$\nabla\lambda_a^K$ are constant on $K$.
\begin{equation}
(\bm{A}_{K}^{\phi})_{ab}=|K|\,(\nabla\lambda_{b}^{K})\cdot(\nabla\lambda_{a}^{K}),\quad a,b=1,2,3,
\end{equation}
where $|K|$ is the area of $K$. The local right-hand-side vector $\mathbf{f}_K^\phi \in \mathbb{R}^3$ is assembled using a three point Gaussian quadrature rule with weights $\{w_q\}_{q=1}^3$ and barycentric evaluation points $\{\bm{\xi}_q\}_{q=1}^3$:
\begin{equation}
(\mathbf{f}_K^\phi)_a = \sum_{q=1}^{3} w_q \, |K| \, \frac{1}{\epsilon_\phi} \left[ (p_{\min} + e^{P_h(\bm{x}_q)}) - (n_{\min} + e^{N_h(\bm{x}_q)}) \right] \xi_{q,a},
\end{equation}
with the interpolated values $P_h(\bm{x}_q) = \sum_{c=1}^{3} P_c^k \, \xi_{q,c}$ and $N_h(\bm{x}_q) = \sum_{c=1}^{3} N_c^k \, \xi_{q,c}$.

Dirichlet boundary conditions $\phi = \phi_D$ on $\Gamma_D$ are enforced by modifying the linear system. Let $\mathcal{I}_{\mathrm{free}}$ and $\mathcal{I}_{\mathrm{Dir}}$ denote the index sets of free and Dirichlet nodes, respectively. For each Dirichlet node $i \in \mathcal{I}_{\mathrm{Dir}}$:
\begin{enumerate}
    \item $(A_\phi)_{ii} \gets 1$, and $(A_\phi)_{ij} \gets 0$ for all $j \neq i$;
    \item $(A_\phi)_{ji} \gets 0$ for all $j \neq i$;
    \item $(\mathbf{f}_\phi)_i \gets \phi_D(\bm{x}_i)$;
    \item For each free node $j \in \mathcal{I}_{\mathrm{free}}$ adjacent to $i$, subtract the contribution: $(\mathbf{f}_\phi)_j \gets (\mathbf{f}_\phi)_j - (A_\phi)_{ji} \, \phi_D(\bm{x}_i)$.
\end{enumerate}
The reduced symmetric positive definite system for the free nodes is solved by a sparse direct solver, yielding $\widetilde{\bm{\phi}}_{\mathrm{free}}$. The full intermediate potential vector $\widetilde{\bm{\phi}}$ is then constructed by assigning the computed free values and the prescribed Dirichlet values to their respective positions.

After obtaining $\widetilde{\bm{\phi}}$, the element-wise constant gradient $\nabla\widetilde{\phi}_h^K$ is computed as
\begin{equation}
\nabla\widetilde{\phi}_h^K = \sum_{a=1}^{3} \widetilde{\phi}_a \, \nabla\lambda_a^K,
\end{equation}
and the regularised field magnitude on element $K$ is
\begin{equation}
E_{\delta,K} = \sqrt{|\nabla\widetilde{\phi}_h^K|^2 + \delta^2}.
\end{equation}

\subsubsection{Electron Transport Step (Newton for $\mathbf{N}$)}
With the potential fixed at $\widetilde{\bm{\phi}}$, the electron equation is nonlinear in $N$. We solve it by a damped Newton method, starting from the initial guess $\mathbf{N}^{(0)} = \mathbf{N}^k$. For $m = 0, 1, \dots$ (maximum $m_{\mathrm{max}}$ iterations), the following computations are performed.\\

\noindent\textbf{Residual assembly}

On each element $K$, the local residual vector $\mathbf{R}_K^N \in \mathbb{R}^3$ for the current iterate $\mathbf{N}^{(m)}$ has entries (for $a=1,2,3$)
\begin{equation}
(\mathbf{R}_K^N)_a = \sum_{q=1}^{3} w_q \, |K| \,
    \Bigl[ \mathbf{F}_q \cdot \nabla \lambda_a^K
           - (n_{\min} + e^{N_h^{(m)}(\bm{x}_q)}) S_K \, \xi_{q,a} \Bigr]
    + \text{SUPG}_a^K + \text{Bdry}_a^K,
\end{equation}
where
\begin{enumerate}[label=(\arabic*), left=0pt, itemsep=8pt, align=left, widest=2]
    \item $\displaystyle \bm{x}_q = \sum_{c=1}^{3} \xi_{q,c} \bm{x}_c$ is the physical quadrature point;
    \item $\displaystyle N_h^{(m)}(\bm{x}_q) = \sum_{c=1}^{3} N_c^{(m)} \xi_{q,c}$;
    \item $\displaystyle S_K = \epsilon_n E_{\delta,K} 
        \bigl( \alpha_1 e^{-\alpha_2 / E_{\delta,K}} - \eta_0 \bigr)$ is the element-wise constant source coefficient;
    \item the flux vector is
    \[
    \mathbf{F}_q = \epsilon_n e^{N_h^{(m)}(\bm{x}_q)} \nabla N_h^K
        - \bigl( n_{\min} + e^{N_h^{(m)}(\bm{x}_q)} \bigr)
          \bigl( \epsilon_n \nabla \widetilde{\phi}_h^K
                 + \mathbf{v}(\bm{x}_q) \bigr).
    \] with \[\nabla N_h^K = \sum_{c=1}^{3} N_c^{(m)} \nabla \lambda_c^K.\]
\end{enumerate}
The SUPG stabilisation term is
\begin{equation}
\mathrm{SUPG}_a^K = \sum_{q=1}^{3} w_q \, |K| \, 
    \tau_K (\mathbf{w}_n(\bm{x}_q) \cdot \nabla \lambda_a^K)
    \left[ \mathbf{F}_q \cdot \nabla N_h^K
         - (n_{\min} + e^{N_h^{(m)}(\bm{x}_q)}) S_K \right],
\end{equation}
with advective velocity $\mathbf{w}_n(\bm{x}_q) = -(\epsilon_n \nabla \widetilde{\phi}_h^K + \mathbf{v}(\bm{x}_q))$ and element stabilisation parameter $\tau_K$ defined in \eqref{tau_def}.

The boundary contribution $\mathrm{Bdry}_a^K$ is present only if an edge $e \subset \partial K$ lies on $\Gamma_N$. Using a two point Gauss rule on the edge with weights $w_q^{\mathrm{edge}}$ and points $\bm{x}_q^{\mathrm{edge}}$, it is
\begin{equation}
\mathrm{Bdry}_a^K = \sum_{q=1}^{2} w_q^{\mathrm{edge}} \, |e| \, 
    (n_{\min} + e^{N_h^{(m)}(\bm{x}_q^{\mathrm{edge}})})
    (\mathbf{v}(\bm{x}_q^{\mathrm{edge}}) \cdot \nu_e) \, 
    \xi_{q,a}^{\mathrm{edge}},
\end{equation}
where $\xi_{q,a}^{\mathrm{edge}}$ is the value of the local basis function $\lambda_a^K$ at the edge quadrature point. In the present numerical experiments $\Gamma_N = \emptyset$, so this term is omitted.\\

\noindent\textbf{Jacobian assembly}

The local Jacobian matrix $\mathbf{J}_K^N \in \mathbb{R}^{3\times3}$ is obtained by differentiating $\mathbf{R}_K^N$ with respect to the nodal values $N_b$. Its entries are
\begin{equation}
    \begin{split}
     (\mathbf{J}_K^N)_{ab} = \sum_{q=1}^{3} w_q \, |K| \, 
    \left[ \frac{\partial \mathbf{F}_q}{\partial N_b} \cdot \nabla \lambda_a^K
           - e^{N_h^{(m)}(\bm{x}_q)} \xi_{q,b} \, S_K \, \xi_{q,a} \right]\\
    + \frac{\partial}{\partial N_b} \bigl( \mathrm{SUPG}_a^K \bigr)
    + \frac{\partial}{\partial N_b} \bigl( \mathrm{Bdry}_a^K \bigr),
    \end{split}
\end{equation}
with
\begin{equation}
    \begin{split}
    \frac{\partial \mathbf{F}_q}{\partial N_b} = \epsilon_n e^{N_h^{(m)}(\bm{x}_q)} \xi_{q,b} \nabla N_h^K + \epsilon_n e^{N_h^{(m)}(\bm{x}_q)} \nabla \lambda_b^K \\- e^{N_h^{(m)}(\bm{x}_q)} \xi_{q,b} \bigl( \epsilon_n \nabla \widetilde{\phi}_h^K + \mathbf{v}(\bm{x}_q) \bigr)
    \end{split}
\end{equation}
The derivatives of the SUPG and boundary terms are computed analogously by applying the product rule; the full expressions are standard and omitted for brevity.\\

\noindent\textbf{Global assembly and Dirichlet conditions}

The global residual $\mathbf{R}^N$ and Jacobian $\mathbf{J}^N$ are obtained by summing the local contributions into the appropriate nodal indices. For each Dirichlet node $i \in \mathcal{I}_{\mathrm{Dir}}$:
\begin{enumerate}[label=(\arabic*), left=0pt, itemsep=6pt, align=left, widest=2]
    \item $(\mathbf{R}^N)_i \gets N_i^{(m)} - N_D(\bm{x}_i)$;
    \item $(\mathbf{J}^N)_{ii} \gets 1$, and $(\mathbf{J}^N)_{ij} \gets 0$ for all $j \neq i$;
    \item $(\mathbf{J}^N)_{ji} \gets 0$ for all $j \neq i$.
\end{enumerate} 
\noindent\textbf{Newton update and line search} 

The Newton correction $\Delta\mathbf{N} \in \mathbb{R}^{N_n}$ is computed by solving the reduced linear system on the free nodes:
\begin{equation}
\mathbf{J}^N(\mathcal{I}_{\mathrm{free}}, \mathcal{I}_{\mathrm{free}}) \, 
\Delta\mathbf{N}_{\mathrm{free}} = -\mathbf{R}^N(\mathcal{I}_{\mathrm{free}}).
\label{line}
\end{equation}
A backtracking line search determines the step length $\alpha \in (0,1]$ such that the residual norm decreases sufficiently (Armijo condition with parameter $\sigma = 10^{-3}$). The nodal values are then updated as
\begin{equation}
\mathbf{N}^{(m+1)} = \mathbf{N}^{(m)} + \alpha \, \Delta\mathbf{N}.
\end{equation}

To prevent numerical overflow, the components of $\mathbf{N}^{(m+1)}$ are clamped to the interval $[-50, 50]$. The Newton iteration terminates when
\begin{equation}
\frac{\|\Delta\mathbf{N}_{\mathrm{free}}\|}{1 + \|\mathbf{N}_{\mathrm{free}}^{(m)}\|} < \tau_{\mathrm{Newton}},
\end{equation}
with a typical tolerance $\tau_{\mathrm{Newton}} = 10^{-10}$. The converged vector is taken as the intermediate electron log density $\widetilde{\mathbf{N}}$.

\subsubsection{Ion Transport Step (Newton for $\mathbf{P})$}
The ion equation is solved by an entirely analogous Newton procedure, with the frozen fields $\widetilde{\bm{\phi}}$ and $\widetilde{\mathbf{N}}$. Starting from $\mathbf{P}^{(0)} = \mathbf{P}^k$, the local residual and Jacobian are assembled using the ion specific parameters:
\begin{itemize}
    \item Replace $n_{\min}$ with $p_{\min}$ and $\epsilon_n$ with $\epsilon_p$;
    \item The flux vector becomes
    \[
    \mathbf{F}_q^P = \epsilon_p e^{P_h^{(m)}(\bm{x}_q)} \nabla P_h^K
        + (p_{\min} + e^{P_h^{(m)}(\bm{x}_q)})
          \bigl( \epsilon_p \nabla \widetilde{\phi}_h^K - \mathbf{v}(\bm{x}_q) \bigr);
    \]
    \item The advective velocity for SUPG is $\mathbf{w}_p(\bm{x}_q) = \epsilon_p \nabla \widetilde{\phi}_h^K - \mathbf{v}(\bm{x}_q)$;
    \item The source term $S_K$ remains unchanged (it depends only on $\widetilde{\mathbf{N}}$ and $\widetilde{\bm{\phi}}$).
\end{itemize}
The Newton iteration produces the intermediate ion log density $\widetilde{\mathbf{P}}$ after satisfying the same convergence criterion.

After the three sequential steps, the intermediate vectors $\widetilde{\bm{\phi}}$, $\widetilde{\mathbf{P}}$, and $\widetilde{\mathbf{N}}$ have been computed. To enhance stability, an under-relaxation (outer damping) is applied to obtain the new iterates:

\begin{equation}
\begin{cases}
    \phi^{k+1} &= \theta_\phi \widetilde{\phi} + (1-\theta_\phi) \phi^k, \\
    P^{k+1} &= \theta_P \widetilde{P} + (1-\theta_P) P^k, \\
    N^{k+1} &= \theta_N \widetilde{N} + (1-\theta_N) N^k, \label{new}
\end{cases}
\end{equation}

with damping factors $\theta_\phi, \theta_P, \theta_N \in (0,1]$. In practice a common value $\theta \in [0.3,0.7]$ is employed for all three fields. The outer iteration terminates when the maximum relative change in the nodal values satisfies
\begin{equation}
\frac{\|\phi^{k+1} - \phi^k\|}{1 + \|\phi^k\|}
+ \frac{\|P^{k+1} - P^k\|}{1 + \|P^k\|}
+ \frac{\|N^{k+1} - N^k\|}{1 + \|N^k\|}
< \tau_{\mathrm{outer}},
\end{equation}
with a typical tolerance $\tau_{\mathrm{outer}} = 10^{-8}$.

The nonlinear algorithm described above can be summarized as Algorithm \ref{alg:cap}.
\begin{algorithm}[H]
\caption{Gummel‑Newton iteration for the shifted‑log formulation}\label{alg:cap}
\begin{algorithmic}
\Require Mesh, physical parameters, Dirichlet data $\phi_D, P_D, N_D$, initial guesses $\phi^0, P^0, N^0$, damping factors $\theta_{\phi}, \theta_P, \theta_N \in (0, 1]$, tolerances $\tau_{outer}, \tau_{Newton}$.
\State \textbf{Initialisation}: $k=0$\\

\State \textbf{Step 1}: 
Assemble $A_{\phi}$ and $f_{\phi}$ with $P^k$ and $N^k$  frozen. Solve (\ref{poisson}) with Dirichlet conditions.\\
$\quad \rightarrow$ intermediate potential $\tilde{\phi}$.\\

\State \textbf{Step 2}: Assemble residual $R^N$ and Jacobian $J^N$. Solve (\ref{line}); update via backtracking line search.\\
$\quad \rightarrow$ intermediate electron log‑density $\tilde{N}$.\\

\State \textbf{Step 3}: Use analogous Newton procedure  with SUPG‑stabilised to solve the P-equation, with $\tilde{\phi}$ and $\tilde{N}$
frozen.\\
$\quad \rightarrow$ intermediate ion log‑density
$\tilde{P}$.\\

\State \textbf{Step 4}: Compute the new iterates (\ref{new}).\\

\State If the total relative change in the three fields is below $\tau_{outer}$, return 
$(\phi^{k+1}, P^{k+1}, N^{k+1})$ as the solution.

\State Else set $k \gets k+1$ and go to \textbf{Step 1}.
\end{algorithmic}
\end{algorithm}
~\\


			\bibliographystyle{abbrv}
			\bibliography{Bibliography}

@article{budd1991coronas,
	title={Coronas and the space charge problem},
	author={Budd, Chris},
	journal={European Journal of Applied Mathematics},
	volume={2},
	number={1},
	pages={43--81},
	year={1991},
	publisher={Cambridge University Press}
}

@book{fichera1965linear,
	title={Linear elliptic differential systems and eigenvalue problems},
	author={Fichera, Gaetano},
	volume={8},
	year={1965},
	publisher={Springer}
}

@article{ladyzhenskaya1968linear,
	title={Linear and quasilinear elliptic equations, 1968},
	author={Ladyzhenskaya, Olga A and Ural’tseva, Nina N},
	journal={Leon Ehrenpreis Academic Press, New York},
	year={1968}
}

@book{krasnoselskij1984geometrical,
	title={Geometrical methods of nonlinear analysis},
	author={Krasnoselskij, Mark Aleksandrovi{\v{c}} and Zabrejko, Petr Petrovi{\v{c}}},
	volume={263},
	year={1984},
	publisher={Springer}
}

@article{rabinowitz1971some,
	title={Some global results for nonlinear eigenvalue problems},
	author={Rabinowitz, Paul H},
	journal={Journal of functional analysis},
	volume={7},
	number={3},
	pages={487--513},
	year={1971},
	publisher={Elsevier}
}

@book{markowich1985stationary,
	title={The stationary semiconductor device equations},
	author={Markowich, Peter A},
	year={1985},
	publisher={Springer Science \& Business Media}
}

@book{markowich2012semiconductor,
	title={Semiconductor equations},
	author={Markowich, Peter A and Ringhofer, Christian A and Schmeiser, Christian},
	year={2012},
	publisher={Springer Science \& Business Media}
}

@article{frehse1993existence,
	author = {J. Frehse and J. Naumann},
	title = {An existence theorem for weak solutions of the basic stationary semiconductor equations},
	journal = {Applicable Analysis},
	volume = {48},
	number = {1-4},
	pages = {157--172},
	year = {1993},
	publisher = {Taylor \& Francis},
	doi = {10.1080/00036819308840156},
	URL = { https://doi.org/10.1080/00036819308840156},
	eprint = { 	https://doi.org/10.1080/00036819308840156}}

@inproceedings{stampacchia1965probleme,
	title={Le probl{\`e}me de Dirichlet pour les {\'e}quations elliptiques du second ordre {\`a} coefficients discontinus},
	author={Stampacchia, Guido},
	booktitle={Annales de l'institut Fourier},
	volume={15},
	pages={189--257},
	year={1965}
}

@book{renardy2004introduction,
  title={An introduction to partial differential equations},
  author={Renardy, Michael and Rogers, Robert C},
  year={2004},
  publisher={Springer}
}

@article{lions1969quelques,
  title={" Quelques M{\'e}thodes de R{\'e}solution des Probl{\`e}mes aux Limites Non-Lin{\'e}aires,''},
  author={Lions, J-L},
  journal={Dunod},
  year={1969}
}

@book{evans2022partial,
  title={Partial differential equations},
  author={Evans, Lawrence C},
  volume={19},
  year={2022},
  publisher={American mathematical society}
}

@article{Budd1991,
  author  = {Budd, C.},
  title   = {Coronas and the space charge problem},
  journal = {European Journal of Applied Mathematics},
  year    = {1991},
  volume  = {2},
  pages   = {43--81},
}

@book{Grisvard1992,
  author    = {Grisvard, P.},
  title     = {Singularities in Boundary Value Problems},
  publisher = {Masson},
  year      = {1992},
  address   = {Paris},
}

@article{Molinero2006,
  author  = {Molinero, D. and Comulada, R. and Castañer, L.},
  title   = {Dielectric charge measurements in capacitive microelectromechanical switches},
  journal = {Applied Physics Letters},
  year    = {2006},
  volume  = {89},
  pages   = {103506},
  doi     = {10.1063/1.2345609},
}

@article{Morrow1997,
  author  = {Morrow, R. and Lowke, J. J.},
  title   = {Streamer propagation in air},
  journal = {Journal of Physics D: Applied Physics},
  year    = {1997},
  volume  = {30},
  pages   = {614--627},
  doi     = {10.1088/0022-3727/30/4/017},
}

@book{Raizer1991,
  author    = {Raizer, Y. P.},
  title     = {Gas Discharge Physics},
  publisher = {Springer},
  year      = {1991},
  address   = {Berlin},
  isbn      = {978-3-642-64760-4},
}

@book{Roache1998,
  author    = {Roache, P. J.},
  title     = {Verification and Validation in Computational Science and Engineering},
  publisher = {Hermosa},
  year      = {1998},
  address   = {Albuquerque},
  isbn      = {978-0-913478-08-2},
}

@book{Senturia2001,
  author    = {Senturia, S. D.},
  title     = {Microsystem Design},
  publisher = {Kluwer},
  year      = {2001},
  address   = {Boston},
  isbn      = {978-0-7923-7246-2},
}

@book{Zienkiewicz2013,
  author    = {Zienkiewicz, O. C. and Taylor, R. L. and Zhu, J. Z.},
  title     = {The Finite Element Method: Its Basis and Fundamentals},
  edition   = {7th},
  publisher = {Butterworth-Heinemann},
  year      = {2013},
  address   = {Oxford},
  isbn      = {978-1-85617-633-0},
}

@book{rebeiz2004rf,
  title={RF MEMS: theory, design, and technology},
  author={Rebeiz, Gabriel M},
  year={2004},
  publisher={John Wiley \& Sons}
}

@article{van2012capacitive,
  title={Capacitive RF MEMS switch dielectric charging and reliability: a critical review with recommendations},
  author={Van Spengen, WM},
  journal={Journal of Micromechanics and Microengineering},
  volume={22},
  number={7},
  pages={074001},
  year={2012},
  publisher={IOP Publishing}
}

@article{papaioannou2005temperature,
  title={Temperature study of the dielectric polarization effects of capacitive RF MEMS switches},
  author={Papaioannou, Giorgos and Exarchos, M-N and Theonas, Vasilios and Wang, Guoan and Papapolymerou, John},
  journal={IEEE Transactions on Microwave Theory and Techniques},
  volume={53},
  number={11},
  pages={3467--3473},
  year={2005},
  publisher={IEEE}
}

@inproceedings{yuan2005modeling,
  title={Modeling and characterization of dielectric-charging effects in RF MEMS capacitive switches},
  author={Yuan, Xiaobin and Hwang, James CM and Forehand, David and Goldsmith, Charles L},
  booktitle={IEEE MTT-S International Microwave Symposium Digest, 2005.},
  pages={753--756},
  year={2005},
  organization={IEEE}
}

@inproceedings{herfst2008kelvin,
  title={Kelvin probe study of laterally inhomogeneous dielectric charging and charge diffusion in RF MEMS capacitive switches},
  author={Herfst, RW and Steeneken, PG and Schmitz, J and Mank, AJG and Van Gils, M},
  booktitle={2008 IEEE International Reliability Physics Symposium},
  pages={492--495},
  year={2008},
  organization={IEEE}
}

@article{molinero2006dielectric,
  title={Dielectric charge measurements in capacitive microelectromechanical switches},
  author={Molinero, D and Comulada, R and Castaner, L},
  journal={Applied physics letters},
  volume={89},
  number={10},
  year={2006},
  publisher={AIP Publishing}
}

@article{van1950theory,
  title={Theory of the flow of electrons and holes in germanium and other semiconductors},
  author={Van Roosbroeck, W},
  journal={The Bell System Technical Journal},
  volume={29},
  number={4},
  pages={560--607},
  year={1950},
  publisher={Nokia Bell Labs}
}

@book{selberherr1984analysis,
  title={Analysis and simulation of semiconductor devices},
  author={Selberherr, Siegfried},
  year={1984},
  publisher={Springer Science \& Business Media}
}

@book{jungel2001quasi,
  title={Quasi-hydrodynamic semiconductor equations},
  author={J{\"u}ngel, Ansgar},
  volume={41},
  year={2001},
  publisher={Springer Science \& Business Media}
}

@article{brezzi2005discretization,
  title={Discretization of semiconductor device problems (I)},
  author={Brezzi, Franco and Marini, LUISA DONATELLA and Micheletti, Stefano and Pietra, Paola and Sacco, Riccardo and Wang, Song},
  journal={Handbook of numerical analysis},
  volume={13},
  pages={317--441},
  year={2005},
  publisher={Elsevier}
}

@article{miller1999application,
  title={Application of finite element methods to the simulation of semiconductor devices},
  author={Miller, John James Henry and Schilders, WHA and Wang, Song},
  journal={Reports on Progress in Physics},
  volume={62},
  number={3},
  pages={277--353},
  year={1999}
}

@book{vasileska2017computational,
  title={Computational Electronics: semiclassical and quantum device modeling and simulation},
  author={Vasileska, Dragica and Goodnick, Stephen M and Klimeck, Gerhard},
  year={2017},
  publisher={CRC press}
}

@book{townsend1915electricity,
  title={Electricity in gases},
  author={Townsend, John Sealy},
  year={1915},
  publisher={Clarendon Press, Oxford }
}

@book{xiao2016gas,
  title={Gas discharge and gas insulation},
  author={Xiao, Dengming},
  year={2016},
  publisher={Springer}
}

@book{townsend1910theory,
  title={The theory of ionization of gases by collision},
  author={Townsend, John},
  year={1910},
  publisher={Constable, Limited}
}

@article{morrow1997streamer,
  title={Streamer propagation in air},
  author={Morrow, Richard and Lowke, John J},
  journal={Journal of Physics D: Applied Physics},
  volume={30},
  number={4},
  pages={614--627},
  year={1997}
}

@book{roos2008robust,
  title={Robust numerical methods for singularly perturbed differential equations: convection-diffusion-reaction and flow problems},
  author={Roos, Hans-G{\"o}rg and Stynes, Martin and Tobiska, Lutz},
  year={2008},
  publisher={Springer}
}

@article{brooks1982streamline,
  title={Streamline upwind/Petrov-Galerkin formulations for convection dominated flows with particular emphasis on the incompressible Navier-Stokes equations},
  author={Brooks, Alexander N and Hughes, Thomas JR},
  journal={Computer methods in applied mechanics and engineering},
  volume={32},
  number={1-3},
  pages={199--259},
  year={1982},
  publisher={Elsevier}
}

@article{hughes2018multiscale,
  title={Multiscale and stabilized methods},
  author={Hughes, Thomas JR and Scovazzi, Guglielmo and Franca, Leopoldo P},
  journal={Encyclopedia of computational mechanics second edition},
  pages={1--64},
  year={2018},
  publisher={Wiley Online Library}
}

@book{patankar2018numerical,
  title={Numerical heat transfer and fluid flow},
  author={Patankar, Suhas},
  year={2018},
  publisher={CRC press}
}

@article{chainais2003finite,
  title={Finite volume scheme for multi-dimensional drift-diffusion equations and convergence analysis},
  author={Chainais-Hillairet, Claire and Liu, Jian-Guo and Peng, Yue-Jun},
  journal={ESAIM: Mod{\'e}lisation math{\'e}matique et analyse num{\'e}rique},
  volume={37},
  number={2},
  pages={319--338},
  year={2003}
}

@article{abdel2025existence,
  title={Existence of solutions and uniform bounds for the stationary semiconductor equations with generation and ionic carriers},
  author={Abdel, Dilara and Blaustein, Alain and Chainais-Hillairet, Claire and Herda, Maxime and Moatti, Julien},
  journal={arXiv preprint arXiv:2511.23250},
  year={2025}
}

@article{gajewski1986basic,
  title={On the basic equations for carrier transport in semiconductors},
  author={Gajewski, Herbert and Gr{\"o}ger, Konrad},
  journal={Journal of mathematical analysis and applications},
  volume={113},
  number={1},
  pages={12--35},
  year={1986},
  publisher={Academic Press}
}

@incollection{degond2004note,
  title={A note on the energy-transport limit of the semiconductor Boltzmann equation},
  author={Degond, Pierre and Levermore, C David and Schmeiser, Christian},
  booktitle={Transport in Transition Regimes},
  pages={137--153},
  year={2004},
  publisher={Springer}
}

@article{degond2000numerical,
  title={Numerical discretization of energy-transport models for semiconductors with nonparabolic band structure},
  author={Degond, Pierre and J{\"u}ngel, Ansgar and Pietra, Paola},
  journal={SIAM Journal on Scientific Computing},
  volume={22},
  number={3},
  pages={986--1007},
  year={2000},
  publisher={SIAM}
}

@article{scharfetter2005large,
  title={Large-signal analysis of a silicon read diode oscillator},
  author={Scharfetter, Donald L and Gummel, Hermann K},
  journal={IEEE Transactions on electron devices},
  volume={16},
  number={1},
  pages={64--77},
  year={2005},
  publisher={IEEE}
}

@inproceedings{kumar2017fem,
  title={An FEM based framework to simulate semiconductor devices using streamline upwind Petrov-Galerkin stabilization technique},
  author={Kumar, Gaurav and Singh, Mandeep and Ray, Ashok and Trivedi, Gaurav},
  booktitle={2017 27th International Conference Radioelektronika (RADIOELEKTRONIKA)},
  pages={1--5},
  year={2017},
  organization={IEEE}
}

@article{chen2020steady,
  title={Steady-state simulation of semiconductor devices using discontinuous Galerkin methods},
  author={Chen, Liang and Bagci, Hakan},
  journal={IEEE Access},
  volume={8},
  pages={16203--16215},
  year={2020},
  publisher={IEEE}
}

@article{liu2016analysis,
  title={Analysis of the local discontinuous Galerkin method for the drift-diffusion model of semiconductor devices},
  author={Liu, YunXian and Shu, Chi-Wang},
  journal={Science China Mathematics},
  volume={59},
  number={1},
  pages={115--140},
  year={2016},
  publisher={Springer}
}

@article{jungel2001positivity,
  title={A positivity-preserving numerical scheme for a nonlinear fourth order parabolic system},
  author={J{\"u}ngel, Ansgar},
  journal={SIAM journal on numerical analysis},
  volume={39},
  number={2},
  pages={385--406},
  year={2001},
  publisher={SIAM}
}

@article{brezzi1989two,
  title={Two-dimensional exponential fitting and applications to drift-diffusion models},
  author={Brezzi, Franco and Marini, Luisa Donatella and Pietra, Paola},
  journal={SIAM Journal on Numerical Analysis},
  volume={26},
  number={6},
  pages={1342--1355},
  year={1989},
  publisher={SIAM}
}

@article{polak1987semiconductor,
  title={Semiconductor device modelling from the numerical point of view},
  author={Polak, SJ and Den Heijer, C and Schilders, WHA and Markowich, Peter},
  journal={International Journal for Numerical Methods in Engineering},
  volume={24},
  number={4},
  pages={763--838},
  year={1987},
  publisher={Wiley Online Library}
}

@article{perez2025comparison,
  title={A comparison of formulations and non-linear solvers for computational modelling of semiconductor devices},
  author={P{\'e}rez-Escudero, Sergi and Codony, David and Arias, Irene and Fern{\'a}ndez-M{\'e}ndez, Sonia},
  journal={Computational mechanics},
  volume={75},
  number={5},
  pages={1533--1554},
  year={2025},
  publisher={Springer}
}

@article{gummel2005self,
  title={A self-consistent iterative scheme for one-dimensional steady state transistor calculations},
  author={Gummel, Hermann K},
  journal={IEEE Transactions on electron devices},
  volume={11},
  number={10},
  pages={455--465},
  year={2005},
  publisher={IEEE}
}

@book{gilbarg1998elliptic,
  author    = {David Gilbarg and Neil S. Trudinger},
  title     = {Elliptic Partial Differential Equations of Second Order},
  edition   = {Classics in Mathematics},
  publisher = {Springer},
  address   = {Berlin},
  year      = {2001}
}

@article{frehse1996stationary,
  title={Stationary semiconductor equations modeling avalanche generation},
  author={Frehse, J and Naumann, J},
  journal={Journal of mathematical analysis and applications},
  volume={198},
  number={3},
  pages={685--702},
  year={1996},
  publisher={Elsevier}
}

@article{frehse1994existence,
  title={On the existence of weak solutions to a system of stationary semiconductor equations with avalanche generation},
  author={Frehse, Jens and Naumann, Joachim},
  journal={Mathematical Models and Methods in Applied Sciences},
  volume={4},
  number={02},
  pages={273--289},
  year={1994},
  publisher={World Scientific}
}

@article{gimperlein2026analysis,
  title={Analysis of a Model for Electrical Discharge in MEMS},
  author={Gimperlein, Heiko and He, Runan and Lacey, Andrew A},
  journal={arXiv preprint arXiv:2602.21439},
  year={2026}
}

@article{islamov2003global,
  title={Global Solutions of the Drift-Diffusion Approximation Equations in Gas Discharge Theory.},
  author={Islamov, R Sh},
  journal={Differential Equations},
  volume={39},
  number={12},
  year={2003}
}

@article{islamov2006regularity,
  title={On the regularity of solutions to the drift-diffusion approximation equations in gas discharge theory},
  author={Islamov, R Sh},
  journal={Computational Mathematics and Mathematical Physics},
  volume={46},
  number={1},
  pages={125--140},
  year={2006},
  publisher={Springer}
}
		
	\end{document}